\newtheorem{theorem}{Theorem}[section]
\newtheorem{lemma}[theorem]{Lemma}
\newtheorem{proposition}[theorem]{Proposition}
\theoremstyle{definition}
\newtheorem{definition}[theorem]{Definition}
\newtheorem{example}[theorem]{Example}
\theoremstyle{remark}
\newtheorem{remark}[theorem]{Remark}
\newcommand{\R}{\mathbb R}
\newcommand{\C}{\mathbb C}
\newcommand{\Z}{\mathbb Z}
\newcommand{\CP}{\mathbb{CP}}
\newcommand{\tr}{\mathrm{tr}}
\newcommand{\Res}{\mathrm{Res}}
\newcommand{\Mcal}{\mathcal M}
\newcommand{\Pcal}{\mathcal P}
\newcommand{\Dcal}{\mathcal D}
\newcommand{\SL}{\mathrm{SL}}
\title[Parabolic Higgs Bundles and Hyperpolygon Spaces]{Semiclassical Limits of Strongly Parabolic Higgs Bundles and Hyperpolygon Spaces}
\author[L. Heller]{Lynn Heller}
\address{Beijing Institute of Mathematical Sciences and Applications (BIMSA), Beijing, China}
\email{lynn@bimsa.cn}
\author[S. Heller]{Sebastian Heller}
\address{Beijing Institute of Mathematical Sciences and Applications (BIMSA), Beijing, China}
\email{sheller@bimsa.cn}
\author[C. Meneses]{Claudio Meneses}
\address{Mathematisches Seminar, Christian-Albrechts-Universit\"at zu Kiel, Germany}
\email{meneses@math.uni-kiel.de}
\date{\today}
\begin{document}

\noindent
\begin{abstract}
We investigate the Hitchin hyperk\"ahler metric on the moduli space of strongly parabolic
$\mathfrak{sl}(2,\C)$-Higgs bundles on the $n$-punctured Riemann sphere and its
degeneration obtained by scaling the parabolic weights $t\alpha$ as $t\to0$.
Using the parabolic Deligne--Hitchin moduli space, we show that twistor lines of
hyperpolygon spaces arise as limiting initial data for twistor lines at small weights,
and we construct the corresponding real-analytic families of $\lambda$-connections.
On suitably shrinking regions of the moduli space, the rescaled Hitchin metric converges,
in the semiclassical limit, to the hyperk\"ahler metric on the hyperpolygon space
$\mathcal X_\alpha$, which thus serves as the natural finite-dimensional model for the
degeneration of the infinite-dimensional  hyperk\"ahler reduction.
Moreover, higher-order corrections of the Hitchin metric in this semiclassical regime
can be expressed explicitly in terms of iterated integrals of logarithmic differentials
on the punctured sphere.
\end{abstract}

\maketitle
\setcounter{tocdepth}{1}
\tableofcontents
\noindent

\section{Introduction}
Moduli spaces of Higgs bundles on Riemann surfaces represent a fundamental class of
examples of hyperk\"ahler manifolds which arise from gauge theory via infinite-dimensional
hyperk\"ahler reduction.
In his work \cite{Hitchin1987}, Hitchin showed that the moduli space of stable
Higgs bundles over a compact Riemann surface carries a natural hyperk\"ahler metric, and
as a real manifold, it is diffeomorphic to the moduli space of flat irreducible connections via
solutions of the self-duality equations.
Simpson~\cite{Sim2} later extended this correspondence to noncompact Riemann surfaces using
parabolic bundles and proving a non-abelian Hodge correspondence between stable
parabolic Higgs bundles and flat connections with prescribed local monodromy. These moduli spaces have been studied extensively from algebro-geometric, 
analytic, differential-geometric and topological perspectives.

In this work we will focus on \emph{strongly parabolic} 
$\mathfrak{sl}(2,\C)$-Higgs bundles  over the Riemann sphere. 
Thus we fix a finite subset $\{p_{1},\dots,p_{n}\}\subset\CP^1$ and consider
holomorphic bundles $E\rightarrow\CP^1$ of degree 0 and rank 2 enhanced 
with complex lines $\ell_j\subset E\vert_{p_j}$ and parabolic weights $0<\alpha_j<\tfrac12$. 
The parabolic Higgs fields $\Phi$ are constrained to have at most simple poles 
at each ${p_j}$, whose residues are nilpotent and annihilate each 
 line $\ell_j$. For generic weights $\alpha=(\alpha_1,\dots,\alpha_n)$, the resulting moduli space 
$\Mcal_{\mathrm{Higgs}}(\alpha)$ is a smooth complex symplectic manifold of dimension 
$2n-6$. The birational geometry resulting from the dependence on parabolic weights was 
described by Thaddeus \cite{Thaddeus}. On the analytic side, Konno \cite{Konno} constructed a complete 
hyperk\"ahler metric on $\Mcal_{\mathrm{Higgs}}(\alpha)$ which is a natural generalization 
of the Hitchin metric in the case of compact Riemann surfaces without marked points. 
As the construction proceeds through an infinite-dimensional hyperk\"ahler quotient, 
the metric is inherently non-explicit. In particular, even though the real-analytic dependence of the metric on 
parabolic weights follows from the work of Kim--Wilkin \cite{KiWi}, 
it is by no means accessible in closed form.

Simpson's parabolic non-abelian Hodge correspondence determines a 
real-analytic diffeomorphism between $\Mcal_{\mathrm{Higgs}}(\alpha)$ and the moduli space 
$\Mcal_{\mathrm{dR}}(\alpha)$ of irreducible logarithmic $\SL(2,\C)$-connections on the punctured surface
$\Sigma_{0} =\CP^1\setminus\{p_1,\dots,p_n\}$ 
with fixed 
eigenvalues $\pm\alpha_j$ of its residues. A natural framework to encode 
this correspondence is provided by the
\emph{parabolic Deligne--Hitchin moduli space}
$\Mcal_{\mathrm{DH}}^{\mathrm{par}}$,
which parametrizes parabolic $\lambda$-connections and interpolates
between parabolic Higgs bundles and logarithmic connections.
From the twistorial point of view, solutions of Hitchin's self-duality equations are
determined as distinguished \emph{real holomorphic sections} of
$\Mcal_{\mathrm{DH}}^{\mathrm{par}}\to\CP^1$, called \emph{twistor lines}. The full
hyperk\"ahler metric can be recovered from the geometry of these sections via the {\em twisted
holomorphic symplectic form} \cite{Sim3,HKLR}. This perspective played a central role in the 
analysis of the large-scale asymptotic geometry of Hitchin moduli spaces in the work of
Gaiotto--Moore--Neitzke \cite{GMN}.

We study an analogous \emph{semiclassical limit} of
parabolic Higgs bundle moduli spaces obtained by scaling down the parabolic weights.
We fix a generic weight vector $\alpha=(\alpha_1,\dots,\alpha_n)$
and consider the one-parameter family
\[
\Mcal_\alpha(t)=\Mcal_{\mathrm{Higgs}}(t\alpha), \qquad t\ge 0.
\]
For sufficiently small $t>0$, the moduli spaces $\Mcal_\alpha(t)$ are smooth and
biholomorphic to each other. However, as $t\to0$ the parabolic structure degenerates, and the nilpotent cone of the
Higgs bundle moduli space --- the central fiber of the Hitchin fibration, together
with a shrinking neighbourhood thereof --- collapses in the limit. 
In this regime of parabolic weights, Godinho--Mandini~\cite{GoMa} established an algebro-geometric correspondence between a Zariski open subset of $\Mcal_\alpha(t)$  and the hyperpolygon space 
$\mathcal X_\alpha$ introduced by Konno~\cite{Konno2}. This correspondence was later shown in~\cite{BiFloGoMa} to extend at the level of holomorphic symplectic structures.
It is thus natural to ask whether the 
corresponding hyperk\"ahler metrics are also related. 
Addressing this question is the main theme of this work.

The hyperpolygon spaces are defined as hyperk\"ahler 
quotients of a finite-dimensional quaternionic vector space, and in this sense they are natural generalizations of the Eguchi--Hanson space.  
The conceptual analogy between the 
Eguchi--Hanson metric and the Hitchin metrics was already observed in \cite{Hitchin91}.
In the case under consideration,
the analogy becomes explicit in Konno's constructions
 \cite{Konno, Konno2}, as
 the geometry governing the residue data of parabolic Higgs fields at each marked point is that of 
 an Eguchi--Hanson space. 
From a geometric perspective, our results show that the hyperk\"ahler quotient description 
of hyperpolygon spaces can be viewed as a semiclassical limit of the infinite-dimensional 
hyperk\"ahler reduction defining the Hitchin moduli space. We show that in the $t=0$ limit, 
the twisted hyperk\"ahler moment maps degenerate to  those of the hyperpolygon space, and the 
hyperpolygon spaces appear as a natural resolution of the singularity at the origin of the
$\Mcal_\alpha$ for $\alpha=0.$
Our motivation is the work \cite{HellerHellerTraizet2025} by the first two authors in 
collaboration with Traizet, 
 where, in the case $n=4$, all four parabolic weights are taken to be equal and simultaneously scaled to zero, and an orbifold quotient of the Eguchi--Hanson space is recovered as a geometric model for the degeneration of the Hitchin moduli space of the four-punctured sphere.

At a technical level, our strategy is a natural extension of that 
of \cite{HellerHellerTraizet2025}; we show 
that the twistorial data 
associated to a stable hyperpolygon can be used to construct 
twistor lines for the
Hitchin moduli space with parabolic weights $t\alpha_j$ on the 
corresponding stable strongly parabolic Higgs bundle for sufficiently small $t>0$.
This is achieved by formulating the problem of constructing real holomorphic sections as a monodromy problem for loop-valued connections and solving it via an implicit function theorem on suitable Banach spaces of loop algebras. In this formulation, the initial conditions for the implicit function theorem are naturally identified with the twistor lines of the hyperpolygon spaces.
We then show that the resulting real holomorphic sections are in fact twistor
lines, corresponding to genuine solutions of Hitchin's self-duality equations with
parabolic weights $t\alpha$. Finally, after rescaling  by $t^{-1}$, 
the Hitchin metric converges on subsets of bounded energy of the form $\mathcal E_t \le t\,C$
 to the hyperpolygon metric on $\mathcal X_\alpha$ as $t\to0$. 
As in  the setup of Gaiotto--Moore--Neitzke \cite{GMN} for large parabolic Higgs fields, the correction terms of the asymptotic expansion of the 
hyperk\"ahler metric in the degeneration parameter $t$ can be computed explicitly. 
The Deligne--Hitchin twistorial interpretation leads to their description in terms of 
iterated integrals  (polylogarithms) on the $n$-punctured sphere.
Furthermore,
from a Hamiltonian point of view, 
the rescaled Hitchin energy generating the natural $S^1$--action on $\Mcal_\alpha(t)$ also converges 
to the hyperpolygon Hamiltonian generating the corresponding circle action on $\mathcal X_\alpha$. 
In this sense  the hyperpolygon spaces are semiclassical limits 
of Hitchin moduli spaces.

Our results fit into a broader program aimed at understanding degenerations and
asymptotic regimes of non-abelian Hodge moduli spaces. Related phenomena appear 
in the study of wild character varieties and irregular
connections by Boalch \cite{Boalch08, Boalch12, Boalch, Boalch18},
 as well as in the 
twistorial analysis of hyperk\"ahler metric
degenerations developed by Gaiotto--Moore--Neitzke \cite{GMN}.
Studies of the asymptotic geometry of the Hitchin metric by Mazzeo--Swoboda--Wei\ss--Witt~\cite{MSWW} and Fredrickson~\cite{Fred} focus on degeneration regimes in which parabolic Higgs fields are scaled to infinity. A key distinction in the present work lies in the nature of the degeneration: here it concentrates near the core of the moduli space and is driven instead by simultaneously scaling the parabolic weights to zero.
In the simplest nontrivial case $n=4,$ Heller--Heller--Traizet~\cite{HellerHellerTraizet2025} study precisely this degeneration by scaling four equal weights to zero. In this setting, the correspondence between Hitchin metrics and ALG gravitational instantons established by Fredrickson--Mazzeo--Swoboda--Wei\ss~\cite{FMSW} may be juxtaposed with the Kummer-type construction of Biquard--Minerbe~\cite{BiquardMinerbe}.
This comparison, together with earlier work \cite{HHS} of the first two authors, suggests that our methods can be applied to establish an analogous degeneration governed by Eguchi--Hanson spaces, arising from deformations of the parabolic weights in a regime near the apex $\alpha=(\tfrac{1}{4},\tfrac{1}{4},\tfrac{1}{4},\tfrac{1}{4}).$
Finally, the extensions of the Godinho--Mandini correspondence
 to higher rank by Fisher--Rayan~\cite{FR16},  
  as well to higher genus by Rayan--Schaposnik~\cite{RS21}, provide natural frameworks in which the results of the present work may be generalized by replacing hyperpolygon spaces by Nakajima quiver varieties \cite{Nak}.

We would like to note that, after sharing a draft of our work,  Fredrickson and Yae \cite{FredYae26} informed us that they have independently obtained the same result in the case $n = 4$ using entirely different analytical methods.

\noindent
\textbf{Organization of the paper.}
In Section~\ref{sec:twis} we review the necessary background on strongly parabolic Higgs bundles, the
parabolic non-abelian Hodge correspondence, and the parabolic Deligne--Hitchin moduli
space.
In Section~\ref{sec:hp} we recall the construction of hyperpolygon spaces as hyperk\"ahler quotients
and their identification with moduli spaces of parabolic Higgs bundles for their respective $I$-complex structures.
Section~\ref{sec:construction} contains the core analytic results: the construction of real holomorphic
sections via a monodromy problem and the proof that these sections are twistor lines.
Technical lemmas are moved into the Appendix~\ref{appendix}.
Finally, in Section~\ref{sec:scl} we study the semiclassical limit of the Hitchin metric and prove
its convergence, after rescaling, to the hyperk\"ahler metric on the hyperpolygon space.

\section{Twistor theory for strongly parabolic Higgs bundles}\label{sec:twis}
The purpose of this section is to recall the twistorial description of the hyperk\"ahler
structure on moduli spaces of strongly parabolic Higgs bundles.
The Deligne-Hitchin moduli space 
encodes the complex-analytic facets of the non-abelian Hodge correspondence,
and
provides a unified framework in which the full hyperk\"ahler
geometry can be studied from holomorphic data.
In particular, solutions to Hitchin's self-duality equations correspond to distinguished
real holomorphic sections of the Deligne-Hitchin moduli space, and the hyperk\"ahler metric
is determined  via the twisted-holomorphic symplectic form.

\subsection{Strongly Parabolic Higgs Bundles}\label{sec:parahiggs}

Let $\Sigma$ be a compact Riemann surface and let ${\bf D} = p_1 + \cdots + p_n$ a divisor of
pairwise distinct points on $\Sigma$. 
A parabolic structure $\Pcal$  on a holomorphic rank-$2$ vector bundle 
$E\to \Sigma$ with trivial determinant is a choice at each puncture $p_j$ of a 
complex 
({\em quasi-parabolic}) line $\ell_j \subset E|_{{p_j}}$ together with a \emph{parabolic weight}
$0 < \alpha_j < \tfrac{1}{2}$  assigned to $\ell_j$.  
The parabolic degree $\mathrm{pardeg}(L)$  of a holomorphic line subbundle $L$ of $E$ with respect to $\Pcal$
is defined to be \[\mathrm{pardeg}(L)=\mathrm{deg}(L)+\sum_j\nu_j\]
where $\nu_j=\alpha_j$ if $L_{p_j}=\ell_j$ and $\nu_j=-\alpha_j$ otherwise.
A holomorphic bundle $E$ equipped with parabolic structure $\Pcal = (p_j,\ell_j,\alpha_j)_{j=1,..,n}$ is called {\em stable} if for every holomorphic line subbundle $L \subset E$, one has 
\begin{equation}\label{eq:slopestab}\mathrm{pardeg}(L) < \mathrm{pardeg}(E):=0.\end{equation}
By  Mehta-Seshadri  \cite{MeS} (see \cite{AW98, BY99} for the conventions used here), stable parabolic bundles are in one-to-one correspondence to conjugacy classes of irreducible unitary representations of $\pi_1(\Sigma\setminus {\bf D})$ whose holonomy around each puncture $p_j$ has eigenvalues $e^{\pm 2\pi i \alpha_j}$. Equivalently, for each stable parabolic structure $\Pcal$ there is a unique unitary \emph{compatible logarithmic connection}, i.e. a logarithmic $\mathfrak{sl}(2,\C)$-connection $\nabla$ on $E$ with unitary monodromy and with simple poles along ${\bf D}$ which is {\em compatible} with $\Pcal$ in the sense that 
 its  residues $\Res_{p_j}\nabla$ 
have $\ell_j$ as eigenlines with eigenvalues $\alpha_j$.

A strongly parabolic  $\mathfrak{sl}(2,\C)$-{\em Higgs bundle} on $(\Sigma,{\bf D})$ consists of a pair $(\mathcal P,\Phi)$, where $\mathcal P$ is a parabolic structure on a holomorphic vector bundle $E$ as above, and
\[\Phi \in H^0\!\big(\Sigma,\, K_{\Sigma}\otimes \mathfrak{sl}(2,\C)\otimes \mathcal O({\bf D})\big)
\]
is a meromorphic $\mathfrak{sl}(2,\mathbb C)$-valued 1-form defining a \emph{strongly parabolic Higgs field} with respect to $\mathcal P.$ This means that, at each point $p_j\in{\bf D},$ the residue satisfies
\[(\operatorname{Res}_{p_j}\Phi)\,\ell_j = 0.
\]
Note that two logarithmic connections are compatible with the same parabolic structure $\Pcal$ if and only if their difference is a strongly parabolic Higgs field compatible with $\mathcal P$.

A parabolic Higgs bundle $(\Pcal,\Phi)$ as above  is \emph{stable}  if it satisfies the inequality \eqref{eq:slopestab} for every $\Phi$-invariant  subbundle.
We call $(\Pcal,\Phi)$ \emph{polystable} if it is either stable or a direct sum of two parabolic line bundles of parabolic degree zero with $\Phi=0$.

For a parabolic weight vector \(\alpha=(\alpha_1,\dots,\alpha_n)\) consider the moduli space 
\[\Mcal_{\mathrm{Higgs}}=\Mcal_{\mathrm{Higgs}}(\alpha)\] of polystable strongly parabolic Higgs bundles 
on \((\Sigma,{\bf D})\). An important feature of the parabolic setting is that the moduli problem depends 
on the choice of the weight vector \(\alpha\). 
The space of admissible weights can be identified with a convex bounded polytope, 
which is subdivided by a finite collection of hyperplanes, called 
\emph{semi-stability walls}. By the definition of parabolic stability, semi-stability walls 
correspond to hyperplanes of the form $\mathrm{pardeg}(L)=0$ intersecting $(0,\frac{1}{2})^{n}$
nontrivially for some holomorphic line subbundle $L\subset E$ (see \cite{BY96} for details). They can be expressed in the form
\begin{equation}\label{eq:par-wall}
\sum_{j\in S}\alpha_j-\sum_{j\notin S}\alpha_j = -\deg(L),
\end{equation}
where $S\subset \{1,\dots,n\}$ is the index subset of quasi-parabolic lines interpolated by $L$.

The connected components of the complement of the union of these walls are convex 
regions known as \emph{open chambers}. 
For weights lying in the interior of an open chamber, every polystable strongly 
parabolic Higgs bundle is in fact stable. 
We refer to \cite{BY96, Thaddeus, MenesesSIGMA} for a detailed discussion of this 
wall-and-chamber structure.

 {\bf Convention:} 
In the following we will only consider parabolic weights $(\alpha_1,\dots,\alpha_n)$ that are \emph{generic}, i.e. not contained in a semi-stability wall in the weight polytope, and \emph{small} in the sense that the following inequality is satisfied
 \begin{equation}\label{eq:polygon-wall}
 \sum_{j=1}^{n}\alpha_{j}<1.
 \end{equation}
The genericity constraint implies that every polystable Higgs pair is in fact stable; in particular, both the de~Rham moduli space and the parabolic Higgs bundle moduli space are smooth complex manifolds. The small weight constraint is justified in Remark \ref{rem:small-weight} and ensures that no semi-stability wall is crossed by scaling down a generic choice of $\alpha$ to the vertex $(0,\dots,0)$. It is important to remark that the only semi-stability walls intersecting the small-weight region nontrivially are those for which the right-hand side in \eqref{eq:par-wall} is equal to 0, and these are precisely the walls that remain invariant under parabolic-weight scaling.

\subsection{Simpson's Non-Abelian Hodge Correspondence}
Hitchin \cite{Hitchin1987} showed that for a compact Riemann surface (without punctures) the moduli space of stable Higgs bundles carries a natural hyperk\"ahler metric, and is in particular diffeomorphic, via solutions of Hitchin's self-duality equations, to the moduli space of flat $\mathrm{SL}(2,\C)$-connections.
See \cite{Sim1} for the generalization to the $\mathrm{SL}(n,\C)$-case.
Simpson \cite{Sim2} later extended this \emph{non-abelian Hodge correspondence} to non-compact Riemann surfaces by introducing parabolic Higgs bundles.

\subsubsection*{Tame harmonic metrics}
The parabolic non-abelian Hodge correspondence between the moduli space of strongly parabolic Higgs bundles and the moduli space of logarithmic connections is realized via \emph{tame harmonic metrics}, or equivalently via solutions of Hitchin's self-duality equations satisfying prescribed growth conditions at the punctures $p_j$, determined by the parabolic weights.

More precisely, given a strongly parabolic Higgs pair $(\Pcal,\Phi)$, there exists a unique Hermitian metric $h$ on $E|_{\Sigma\setminus{\bf D}}$ of determinant 1 such that the associated Chern connection $D_h$ satisfies the self-duality equations
\[ F_{D_h} + [\Phi,\Phi^{*_h} ]= 0, \qquad \bar\partial^{D_h}\Phi = 0,
\]
on $\Sigma\setminus{\bf D}$,
and is {\em tame}
 around each $p_j\in{\bf D}$, i.e. in a local coordinate $z$ centered at $p_j$, it has the model growth $(z\bar z)^{\alpha_j}$; see  \cite[Section 2]{Sim2} (or Section~\ref{sec:constwistor} below) for details.

The metric $h$ determines an equivariant harmonic map from the universal cover of $\Sigma\setminus{\bf D}$ into hyperbolic $3$-space.
Here the hyperbolic $3$-space is identified with the space of positive-definite Hermitian inner products of determinant~$1$, and the equivariance is with respect to the monodromy of the flat $\mathrm{SL}(2,\C)$-connection
\begin{equation}\label{eq:flatconnection} \nabla = D_h + \Phi + \Phi^{*_h},\end{equation}
obtained by adding the Higgs field and its $h$-adjoint $\Phi^{*_h}$ to the Chern connection.
Although $\Phi^{*_h}$ is in general singular at the points $p_j$, there is a natural extension 
of the underlying holomorphic bundle induced by $\bar\partial^\nabla$ across ${\bf D}$ which turns $\nabla$ into a logarithmic connection with residues $\Res_{p_j}\nabla$ conjugate to $\mathrm{diag}(\alpha_j,-\alpha_j)$.
In particular, 
the local monodromies lie in the prescribed conjugacy classes. This completes one direction of the correspondence.

Conversely, starting from an irreducible logarithmic connection on $\Sigma\setminus{\bf D}$ with the prescribed residue data, one recovers a parabolic Higgs bundle by solving the harmonic metric equations; see Donaldson's work \cite{Donald} in the compact case.
We refer to Simpson's original paper \cite{Sim2} for the full analytical details in the non-compact case. Simpson's result 
for the case of strongly parabolic $\mathfrak{sl}(2,\C)$-Higgs bundles
can be stated as follows.

\begin{theorem}\label{the:NAH}
(Parabolic non-abelian Hodge correspondence \cite{Sim2}).
Let $\Mcal_{\mathrm{Higgs}}$ be the moduli space of polystable strongly parabolic
$\mathfrak{sl}(2,\C)$-Higgs bundles on $(\Sigma,{\bf D})$ with fixed parabolic weights
$0<\alpha_j<\tfrac12$, and let $\Mcal_{\mathrm{dR}}$ be the moduli space of irreducible flat
$\mathrm{SL}(2,\C)$-connections on $\Sigma\setminus{\bf D}$ with fixed conjugacy classes of
local monodromy, specified by the eigenvalues $e^{\pm 2\pi i \alpha_j}$ at each $p_j\in{\bf D}$.
Then there exists a real-analytic diffeomorphism
\[
\Mcal_{\mathrm{Higgs}} \;\cong\; \Mcal_{\mathrm{dR}},
\]
which maps a strongly parabolic Higgs bundle $(\Pcal,\Phi)$ to the equivalence class of a
flat connection $\nabla$ (cf.~\eqref{eq:flatconnection}) arising from a tame solution of
Hitchin's self-duality equations for that Higgs data.
\end{theorem}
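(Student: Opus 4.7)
The plan is to follow Simpson's strategy and identify both $\Mcal_{\mathrm{Higgs}}$ and $\Mcal_{\mathrm{dR}}$ with a common moduli space of \emph{tame harmonic bundles} on $\Sigma\setminus{\bf D}$, with prescribed singular model behaviour near each puncture dictated by the weights $\alpha_j$. The bridge in both directions is the existence and uniqueness of a Hermitian metric $h$ on $E|_{\Sigma\setminus{\bf D}}$ with local model $(z\bar z)^{\alpha_j}$ at each $p_j$ solving Hitchin's self-duality equations, so that $\nabla=D_h+\Phi+\Phi^{*_h}$ is flat and $\Phi=(\nabla-\nabla^{*_h})^{1,0}/2$ recovers the Higgs field. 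I would first fix the analytic framework: a Poincar\'e-type background metric with cylindrical ends at ${\bf D}$, a reference Hermitian metric with the prescribed growth, and weighted Sobolev spaces adapted to this setup so that the linearisations of the Hitchin equations become Fredholm.

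For the direction $\Mcal_{\mathrm{Higgs}}\to\Mcal_{\mathrm{dR}}$, given a stable strongly parabolic Higgs pair $(\Pcal,\Phi)$, I would solve the self-duality equations by minimising the Donaldson functional over Hermitian metrics of the prescribed singular type. Parabolic stability, expressed through the inequality \eqref{eq:slopestab} for every $\Phi$-invariant line subbundle, is exactly what is needed to preclude escape of the minimising sequence to infinity; geodesic convexity of the functional then yields uniqueness up to an overall constant. Once $h$ has been produced, an asymptotic expansion of the equations at each $p_j$ — matching the singular terms of $\Phi$ and $\Phi^{*_h}$ against the model metric — shows that $\nabla$ extends as a logarithmic connection whose residue is conjugate to $\mathrm{diag}(\alpha_j,-\alpha_j)$, producing the required eigenvalues $e^{\pm 2\pi i\alpha_j}$ for the local monodromy.

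For the inverse direction, starting from an irreducible logarithmic $\SL(2,\C)$-connection $\nabla$ with the prescribed residue data, I would construct the equivariant harmonic map from the universal cover of $\Sigma\setminus{\bf D}$ into hyperbolic $3$-space via a Corlette-type existence theorem adapted to the parabolic setting. Irreducibility of $\nabla$ plays the role of reductivity and yields the properness estimates that rule out bubbling or escape at the punctures, while the prescribed conjugacy class of $\Res_{p_j}\nabla$ pins the asymptotic model of the harmonic map. The resulting metric $h$ gives a decomposition $\nabla=D_h+\Psi$ with $D_h$ unitary and $\Psi$ self-adjoint; its $(1,0)$-part is the Higgs field, and the quasi-parabolic lines $\ell_j$ together with the weights $\alpha_j$ are read off from the $\alpha_j$-eigenspaces of $\Res_{p_j}\nabla$ via the Simpson filtration of sections of moderate growth with respect to $h$.

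To conclude, one checks that the two constructions are mutually inverse — both produce the same tame harmonic bundle — and that stability on one side matches irreducibility on the other, since a destabilising subobject would descend to a parallel sub-harmonic bundle and obstruct the complementary condition. Real-analyticity of the resulting map follows by applying the implicit function theorem to the self-duality system in the weighted Banach setup, once Fredholm properties of the linearisation are established. The main obstacle is genuinely analytic: producing the tame harmonic metric with precisely the prescribed singular asymptotic model at each $p_j$, and ensuring that the weighted elliptic theory on a surface with cuspidal ends is compatible with the algebraic parabolic data. This is the hard technical core of \cite{Sim2}, and every structural statement in the theorem — smooth correspondence, matching of residues, compatibility of stability with irreducibility — ultimately rests on it.
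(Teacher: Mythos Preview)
The paper does not give its own proof of this theorem: it is stated as a citation of Simpson's result, with the sentence immediately preceding the statement reading ``We refer to Simpson's original paper \cite{Sim2} for the full analytical details in the non-compact case.'' There is therefore no proof in the paper to compare your proposal against.

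That said, your outline is a faithful summary of Simpson's strategy in \cite{Sim2} (and of the Donaldson--Corlette picture underlying it), and is consistent with the discussion the paper gives in the paragraphs leading up to the theorem: existence and uniqueness of a tame harmonic metric solving the self-duality equations, the extension of $\nabla=D_h+\Phi+\Phi^{*_h}$ to a logarithmic connection with residues conjugate to $\mathrm{diag}(\alpha_j,-\alpha_j)$, and the converse via an equivariant harmonic map into hyperbolic $3$-space. Your remark that the analytic core --- producing the tame harmonic metric with the correct singular model and setting up the weighted elliptic theory --- is the genuine content of \cite{Sim2} is exactly right, and is precisely why the paper does not attempt to reproduce it.
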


This correspondence is one-to-one between stable (respectively polystable) parabolic Higgs bundles and irreducible (respectively totally reducible) flat connections.
Moreover, as first shown by Konno \cite{Konno}, it induces a hyperk\"ahler structure on the moduli space: in particular,
$\Mcal_{\mathrm{Higgs}}$ and $\Mcal_{\mathrm{dR}}$ are identified as \emph{real} manifolds
of real dimension $4n+12g-12$, where $n$ is the number of 
marked points and $g$ is the genus of $\Sigma.$ They carry distinct (indeed anti-commuting) complex structures,
denoted $I$ and $J$ respectively, both compatible with the underlying hyperk\"ahler metric.

\subsection{Parabolic Deligne-Hitchin Moduli Spaces}\label{sec:pdhms}
 The different complex structures of a hyperk\"ahler manifold can be encoded in its twistor space; see~\cite{HKLR}.
 The complex-analytic construction of the twistor space of the Hitchin moduli space is
  due to Deligne (in unpublished notes) and Simpson \cite{Sim3}. In the strongly parabolic setting, this construction was carried out by Simpson \cite{Sim21} and by Alfaya-G\'omez \cite{AlGo}. We outline the construction here in the rank-$2$ case; see also~\cite{HellerHellerTraizet2025}.

Fix a generic parabolic weight vector
$\alpha$.
For each complex number $\lambda \in \C$, consider the moduli space of parabolic $\lambda$-connections on $(\Sigma,{\bf D})$.
By a parabolic $\lambda$-connection we mean a triple \[\mathcal D=(E,\Pcal,D^\lambda)\] consisting of a holomorphic bundle $E$ with parabolic structure $\Pcal$ as above together with
\[D^\lambda=\begin{cases}\lambda \partial^\nabla \quad \text{if } \lambda\neq0\\\Phi \quad \text{if }\lambda=0\end{cases},\]
where $\nabla$ is a logarithmic connection compatible with the parabolic structure $\Pcal$, and $\Phi$ is a strongly parabolic Higgs field.

Note that for a parabolic \(\lambda\)-connection, the residue 
\(\operatorname{Res}_{p_j} D^\lambda\) has the quasi-parabolic line 
\(\ell_j\) as an eigenline with eigenvalue \(\lambda\alpha_j\) at each 
\(p_j\in{\bf D}\). Concretely, in a local holomorphic 
trivialization near \(p_j\), the operator \(D^\lambda\) can be written as
\[
D^\lambda
= \lambda\,\partial
+ \begin{pmatrix}
\lambda\alpha_j & * \\
0 & -\lambda\alpha_j
\end{pmatrix}
\frac{dz}{z}
+ \text{(holomorphic $1$-form)},
\]
so that \(\ell_j\) is indeed an eigenline of the residue with eigenvalue 
\(\lambda\alpha_j\).

In particular, for \(\lambda=0\) the data \((E,\mathcal P,D^0)\) coincide 
with a strongly parabolic Higgs bundle, while for \(\lambda=1\) the 
operator \(D^1\) is an ordinary logarithmic connection compatible with the 
parabolic structure \(\mathcal P\). More generally, for \(\lambda\neq 0\), 
the operator \(\tfrac{1}{\lambda}D^\lambda\) is the \((1,0)\)-part of a 
logarithmic connection compatible with \(\mathcal P\). Conversely, every 
logarithmic connection \(\nabla\) compatible with \(\mathcal P\) gives 
rise, by scaling, to a parabolic \(\lambda\)-connection 
\((E,\mathcal P,\lambda\,\partial^\nabla)\).

 \subsubsection*{The Hodge moduli space}
Fix a generic parabolic weight vector
$\alpha$.
Let $\Mcal_{\mathrm{Hod}}$ denote the moduli space of 
pairs $(\lambda,\mathcal D)$, where $\lambda\in\C$ is arbitrary and  $\mathcal D$ is a polystable parabolic
$\lambda$-connections on $(\Sigma,{\bf D})$.
It is a smooth complex analytic variety equipped with a holomorphic projection
\[
\pi_{\mathrm{Hod}} \colon \Mcal_{\mathrm{Hod}} \to \C,
\]
sending  $(\lambda,\mathcal D)$ to the parameter
$\lambda$.
By construction, for each $\lambda\neq0$, the fiber 
is analytically isomorphic to the de~Rham moduli space $\Mcal_{\mathrm{dR}}$ of 
logarithmic connections with prescribed residues
\begin{equation}\label{eq:lambdaderham}\pi_{\mathrm{Hod}}^{-1}(\lambda)\cong\Mcal_{\mathrm{dR}}.\end{equation}
The  fiber $\pi_{\mathrm{Hod}}^{-1}(0)$ is naturally identified with the moduli
space $\Mcal_{\mathrm{Higgs}}$ of polystable strongly parabolic Higgs bundles.

\subsubsection*{The Deligne--Hitchin moduli space}
Let $\bar{\Sigma}$ denote the Riemann surface with complex structure conjugate to that
of $\Sigma$, and let $\bar{{\bf D}}$ denote the  divisor on $\bar \Sigma$ corresponding to
${\bf D}$ on $\Sigma$. Note that the deRham moduli spaces of logarithmic connections on $\Sigma$ and $\bar\Sigma$  with prescribed residue data at ${\bf D}$ respectively $\bar{\bf D}$
are naturally isomorphic. In fact, each of them is biholomorphic
(via the Deligne extension for the fixed choices of logarithms at the punctures)
to the
Betti moduli space
$\Mcal_{\mathrm{Betti}}$, consisting of irreducible representations of
$\pi_1(\Sigma\setminus{\bf D})$ with fixed local conjugacy classes up to overall conjugation.

Using \eqref{eq:lambdaderham} on $\Sigma$ and $\bar\Sigma$,  the moduli space of parabolic
$\lambda$-connections on $\Sigma$ is canonically identified with the corresponding
moduli space of parabolic
$\lambda^{-1}$-connections on $\bar{\Sigma}$ for each $\lambda\in\C^*$.
Using this identification, one glues $\Mcal_{\mathrm{Hod}}(\Sigma)$ and
$\Mcal_{\mathrm{Hod}}(\bar{\Sigma})$ along $\C^*$ to obtain the
\emph{parabolic Deligne--Hitchin moduli space}
\[
\pi \colon \Mcal_{\mathrm{DH}}^{\mathrm{par}} \to \CP^1.
\]
 
 By construction, the fibers of $\pi$ are given by
\[
\pi^{-1}(0) = \Mcal_{\mathrm{Higgs}}(\Sigma), \qquad
\pi^{-1}(\infty) = \Mcal_{\mathrm{Higgs}}(\bar{\Sigma}),
\]
while for every $\lambda\in\C^*$ one has by \eqref{eq:lambdaderham}
\[
\Mcal_{\mathrm{dR}} (\Sigma)\cong\pi^{-1}(\lambda) \cong \Mcal_{\mathrm{dR}} (\bar \Sigma).
\]

\subsubsection*{Real structure}
  The fibration $\Mcal_{\mathrm{DH}}^{\mathrm{par}}\to\CP^1$ 
comes equipped with a natural \emph{real structure} $\tau\colon \Mcal_{\mathrm{DH}}^{\mathrm{par}}\to\Mcal_{\mathrm{DH}}^{\mathrm{par}}$, i.e.\ an anti-holomorphic
involution covering the antipodal map on $\CP^1$,
\[
\lambda \longmapsto -\,\bar{\lambda}^{-1}.
\]
Concretely, $\tau$ sends a parabolic $\lambda$-connection $(E,\Pcal,D^\lambda)$ to the
parabolic $(-1/\bar{\lambda})$-connection on the same underlying $C^\infty$ bundle,
obtained by complex conjugation of the connection data and 
of the quasi-parabolic lines;
see \cite[Sections~2.3--2.4]{HellerHellerTraizet2025} for details.
In particular, $\tau$ exchanges the moduli spaces of parabolic Higgs bundles for $\Sigma$ and $\bar{\Sigma}$ 
as fibers over $\lambda=0$ and $\lambda=\infty$, respectively.

Let \(\mathcal D\) be a parabolic \(\lambda\)-connection on \(\Sigma\), and denote by 
\(\rho_{\mathcal D}:\pi_1(\Sigma\setminus{\bf D})\to \mathrm{SL}(2,\C)\) the associated monodromy 
representation. The \(\lambda\)-connection \(\tau(\mathcal D)\), defined on the conjugate 
Riemann surface \(\bar\Sigma\), determines a representation 
\(\rho_{\tau(\mathcal D)}:\pi_1(\bar\Sigma\setminus{\bf D})\to \mathrm{SL}(2,\C)\) which is 
complex conjugate to \(\rho_D\). Equivalently, \(\rho_{\tau(\mathcal D)}\) is naturally 
identified with the complex conjugate of the contragredient representation 
\(\overline{(\rho_{\mathcal D}^{T})^{-1}}\).

\subsection{Twistor Lines}
Consider a (tame) solution  $(E,\Pcal,\Phi,h)$ of Hitchin's self-duality equations, and define the
associated family of  connections
 \begin{equation}\label{eq:assocfam}
 \nabla^\lambda = D_h +\lambda^{-1} \Phi + \lambda\Phi^{*_h}.\end{equation}
 One finds that $ \nabla^\lambda$ is flat for all $\lambda\in\C^*$ due to the harmonicity of the metric $h$.
 Moreover, for each $\lambda\in\C^*$, $ \nabla^\lambda $ has a natural extension as 
 a logarithmic connection into the singular points $p_j\in{\bf D}$ by the results of Simpson \cite{Sim2}.

 Using the identification \eqref{eq:lambdaderham}
 one obtains a holomorphic family of parabolic
$\lambda$-connections
\begin{equation}\label{eq:twistor-lambda-line}
\lambda\in\C^* \mapsto (\lambda,\mathcal D^\lambda)
\end{equation}
parametrized by the spectral parameter $\lambda\in\C^*.$
Note that in general, the underlying parabolic structure of $\Dcal^\lambda$ varies with $\lambda$.
This family extends to $\lambda=0$ and coincides with the given strongly parabolic Higgs field at $\lambda=0$. At $\lambda=1$,  $\mathcal D^1$ is via the non-abelian Hodge correspondence the logarithmic connection in Theorem \ref{the:NAH}.
This construction was explained for compact surfaces in \cite{Sim3}, and 
 established for the strongly parabolic setup in full analytic
detail by Mochizuki 
\cite{Mochizuki2006,Mo}.
 
Given
a solution of Hitchin's
self-duality equations on a parabolic bundle,
the
family \eqref{eq:twistor-lambda-line} therefore defines a holomorphic section
\[
s\colon \CP^1 \longrightarrow \Mcal_{\mathrm{DH}}^{\mathrm{par}},\qquad
\lambda \longmapsto(\lambda,\Dcal^\lambda),
\]
which is called the \emph{twistor line} associated to the solution.
By construction, the section $s$ satisfies a reality condition with respect to the
real structure $\tau$ on $\Mcal_{\mathrm{DH}}^{\mathrm{par}}$, namely
\begin{equation}\label{eq:taurealse}
\tau\bigl(s(-\bar{\lambda}^{-1})\bigr)=s(\lambda)
\qquad\text{for all }\lambda\in\CP^1.
\end{equation}
A general holomorphic section $s$  of
$\Mcal_{\mathrm{DH}}^{\mathrm{par}}$ satisfying \eqref{eq:taurealse} is called {\em real holomorphic}.

\begin{remark}
Not every real holomorphic section of the Deligne--Hitchin moduli space arises
from a solution of Hitchin's equations.
First examples of real holomorphic sections which are not twistor lines were constructed
in \cite{HH}.
\end{remark}

 \subsection{Twisted holomorphic symplectic form for Fuchsian $\lambda$-connections}

The parabolic Deligne-Hitchin moduli space 
\(\mathcal M_{\mathrm{DH}}^{\mathrm{par}}\) carries a canonical fiberwise holomorphic
\(2\)-form twisted by \(\mathcal O(2)\). More precisely, there is a
natural holomorphic section
\[
\varpi \in H^0\!\bigl(\mathcal M_{\mathrm{DH}}^{\mathrm{par}},
\Lambda^2 T^*_{\mathrm{fib}} \otimes \mathcal O(2)\bigr),
\]
where \(T_{\mathrm{fib}}\) denotes the holomorphic tangent bundle along the fibers of
the projection \(\pi\colon \mathcal M_{\mathrm{DH}}^{\mathrm{par}}\to\CP^1\).
In the twistorial description of hyperk\"ahler manifolds \cite{HKLR}, \(\varpi\) plays
the role of the holomorphic symplectic form on the fibers.

After trivializing \(\mathcal O(2)\) over \(\lambda\in \C^*\),
the restriction of \(\varpi\) to the fiber \(\pi^{-1}(\lambda)\) can be written as
\begin{equation}\label{eq:varphilambda}
\varpi\big|_{\pi^{-1}(\lambda)}
=
\lambda^{-1}(\omega_J+i\omega_K)
-2\,\omega_I
-\lambda(\omega_J-i\omega_K),
\end{equation}
where \(\omega_I,\omega_J,\omega_K\) are the K\"ahler forms associated with the
hyperk\"ahler metric and the complex structures \(I,J,K\), respectively. This expression encodes the full hyperk\"ahler metric \cite{HKLR}.

In the present setting of Deligne-Hitchin moduli spaces, these forms admit concrete interpretations. Up to
normalization, \(\omega_J+i\omega_K\) coincides with the canonical holomorphic
symplectic form on the Higgs moduli space, while
\(\varpi|_{\pi^{-1}(1)}\) agrees with the Atiyah--Bott--Goldman holomorphic symplectic
form on the de~Rham moduli space. More generally, for each \(\lambda\in\C^*\),
\(\varpi|_{\pi^{-1}(\lambda)}\) is given by a Goldman-type holomorphic symplectic form
on the moduli space of parabolic \(\lambda\)-connections. 

In the following, we restrict
to \(\Sigma=\CP^1\) and to the Zariski-open subset of the de~Rham moduli space
consisting of logarithmic connections on the trivial bundle, i.e., classical
Fuchsian systems.
Then, \(\Omega_J=\varpi|_{\pi^{-1}(1)}\) can be written explicitly in
terms of residues;
see \cite{AleMale} or \cite[Section~3.4]{HellerHellerTraizet2025} for details.

 Let \({\bf D}=p_1+\dots+p_n\) with \(p_j\in\C\subset\CP^1\), and
consider a connection of the form
\[
\nabla = d + \sum_j A_j\,\frac{dz}{z-p_j},
\]
where \(A_j\in\mathfrak{sl}_2(\C)\) satisfy \(\det(A_j)=-\alpha_j^2\) and
\(\sum_j A_j=0\), ensuring regularity at infinity.
Tangent vectors \(X,Y\) to the moduli space at \(\nabla\) may be represented by
meromorphic \(\mathfrak{sl}_2(\C)\)-valued \(1\)-forms
\[
X=\sum_j X_j\,\frac{dz}{z-p_j}, \qquad
Y=\sum_j Y_j\,\frac{dz}{z-p_j},
\]
with \(\sum_j X_j=\sum_j Y_j=0\) and
\(\tr(A_j X_j)=\tr(A_j Y_j)=0\) for all \(j\), expressing infinitesimal preservation of
the residue conjugacy classes.
The
holomorphic symplectic $\Omega_J$ form arises via symplectic reduction for the conjugation action
of \(\mathrm{SL}(2,\C)\) with moment map \(\mu=\sum_j A_j\), and is given by the residue
formula
\begin{equation}\label{eq:Goldman-Kirilov}
\Omega_J(X,Y)
= \sum_j \frac{1}{8\,\tr(A_j^2)}\,
\tr\!\bigl(A_j[X_j,Y_j]\bigr).
\end{equation}
Each summand is the Kirillov--Kostant--Souriau form on the adjoint orbit of \(A_j\).

For arbitrary \(\lambda\in\C\), an entirely analogous residue formula describes the holomorphic symplectic form  \(\varpi|_{\pi^{-1}(\lambda)}\) on the
moduli space of parabolic \(\lambda\)-connections; compare with \cite[Section 2.7]{HellerHellerTraizet2025}. For \(\lambda=0\) and weights lying in 
the proper subpolytope for which stable parabolic bundles exist \cite{AW98, BY96}, this reduces (up to scale) to the
canonical Liouville symplectic form on the cotangent bundle of the moduli space of stable parabolic bundles \cite{Hitchin1987, Konno}.

\section{Hyperpolygon spaces}\label{sec:hp}
In this section we recall the definition and basic properties of hyperpolygon spaces,
viewed as finite-dimensional hyperk\"ahler quotients. These spaces provide explicit 
hyperk\"ahler models which are useful to study the geometry of strongly parabolic 
Higgs bundles with complex structure $I$  \cite{GoMa}.
Our main interest in the hyperpolygon spaces lies in their role as  models
for the degeneration of the parabolic Higgs bundle moduli as the parabolic weights
tend to zero.
In particular, their hyperk\"ahler quotient description and explicit twistor geometry
will later allow us to compare the limiting hyperk\"ahler metric with the rescaled
Hitchin metric arising from the parabolic Higgs bundle side.

\subsection{The Eguchi--Hanson space}

The holomorphic cotangent bundle $T^*\CP^1$ is naturally equipped with a holomorphic
symplectic form $\Omega$.
Moreover, for each $\alpha\in\R_{>0}$ there exists a K\"ahler form
$\omega=\omega_\alpha$ on $T^*\CP^1$ whose restriction to the zero section
$\CP^1\subset T^*\CP^1$ is the multiple $\alpha\,\omega_{\mathrm{FS}}$ of the
Fubini--Study K\"ahler form.
Together, $(\omega,\Omega)$ determine a hyperk\"ahler structure on $T^*\CP^1$.
The resulting metric is the well-known Eguchi--Hanson metric \cite{Calabi}.
It is most conveniently constructed via the hyperk\"ahler quotient construction; see
\cite{HKLR}. We briefly recall this construction and refer to \cite{Hitchin91, Konno2} for further
details.

Let $V=\C^2$ and consider the vector space $V\oplus V^*$.
We equip $V$ (and $V^*$) with the standard Hermitian inner product $(\cdot,\cdot)$.
Let
\[
{}^*\colon V\to V^*,\qquad v\mapsto v^*:=(\cdot,v),
\]
denote the complex anti-linear isomorphism induced by the Hermitian form, and use the
same symbol for its inverse, so that $(\cdot,w^*)=w$ for $w\in V^*$.

Then $V\oplus V^*$ carries a natural flat hyperk\"ahler structure
$
(g_0,I,J,K),
$
where $g_0=\mathrm{Re}(\cdot,\cdot)$ is the Euclidean metric on
$M:=V\oplus V^*\cong\R^8$, the complex structure $I$ is multiplication by $i$, and
\[
J(v,w)=(-w^*,v^*)\in V\oplus V^*.
\]

Writing $z_\ell=x_\ell+iy_\ell$, the K\"ahler form and holomorphic symplectic form
with respect to $I$ are
\[
\omega=\omega_I:=\mathrm{Im}(\cdot,\cdot)=-g_0(\cdot,I\cdot)
=\frac{i}{2}\sum_{\ell}dz_\ell\wedge d\bar z_\ell,
\]
and
\[
\Omega=\Omega_I:=\omega_J+i\omega_K
=dz_1\wedge dz_3+dz_2\wedge dz_4.
\]
Consider the $S^1$-action on $V\oplus V^*$ given by
\[
\varphi\cdot(v,w):=(\varphi v,\bar\varphi\, w).
\]
The associated Killing field is given by
\[
X_{(v,w)}=(iv,-iw).
\]

\begin{lemma}
The $S^1=\mathrm{U}(1)$-action is \emph{tri-holomorphic}, i.e.\ it preserves $I$, $J$, and $K$, and it is
\emph{tri-hamiltonian}, i.e.\ Hamiltonian with respect to $\omega_I$, $\omega_J$, and $\omega_K$.
Identifying $\mathfrak{u}(1)=i\R\cong\R$, the moment maps are given by
\[
\mu_I(v,w)=\tfrac12\bigl(|v|^2-|w|^2\bigr),
\]
and
\[
\mu_\C(v,w):=(\mu_J+i\mu_K)(v,w)=i\,w(v).
\]
\end{lemma}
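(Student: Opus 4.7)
The plan is to verify the two claims (tri-holomorphicity and tri-Hamiltonicity with the stated moment maps) through direct calculations in the complex coordinates $z_1,\dots,z_4$ on $V\oplus V^*$. All of this is standard and short; the only point requiring attention is keeping conventions consistent.

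\textbf{Step 1: Tri-holomorphicity.} Since the action is complex linear on $V\oplus V^*$ (it scales $V$ by $\varphi$ and $V^*$ by $\bar\varphi$, both $\C$-linear operations), it commutes with $I=i\cdot\mathrm{id}$. For $J(v,w)=(-w^*,v^*)$, I would use the antilinearity $(\lambda u)^*=\bar\lambda\,u^*$ to compute
\[
J(\varphi v,\bar\varphi w)=\bigl(-(\bar\varphi w)^*,(\varphi v)^*\bigr)=(-\varphi w^*,\bar\varphi v^*)=\varphi\cdot(-w^*,v^*)=\varphi\cdot J(v,w),
\]
showing $J$ is preserved. Preservation of $K=IJ$ is then automatic.

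\textbf{Step 2: Fundamental vector field.} Differentiating the one-parameter subgroup $\theta\mapsto(e^{i\theta}v,e^{-i\theta}w)$ at $\theta=0$ directly recovers the stated Killing field $X_{(v,w)}=(iv,-iw)$, using the identification $\mathfrak u(1)=i\R\cong\R$ with the generator $i\in i\R$ corresponding to $1\in\R$.

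\textbf{Step 3: Moment maps.} I would verify the defining identity $d\mu=-\iota_X\omega$ for each of the three K\"ahler forms by direct contraction. Writing $X$ in the real coordinates $(x_\ell,y_\ell)$ as $-y_\ell\partial_{x_\ell}+x_\ell\partial_{y_\ell}$ for $\ell=1,2$ and with reversed signs for $\ell=3,4$, a line of computation using $\omega_I=\sum_\ell dx_\ell\wedge dy_\ell$ yields
\[
\iota_X\omega_I=-\tfrac12\,d\bigl(|v|^2-|w|^2\bigr),
\]
matching the stated $\mu_I$. For the complex moment map, I would use the equivalent statement that $\mu_\C=\mu_J+i\mu_K$ is a holomorphic moment map for the $\C^*$-extension of the action with respect to the holomorphic symplectic form $\Omega_I=\omega_J+i\omega_K=dz_1\wedge dz_3+dz_2\wedge dz_4$. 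Since $X=iz_\ell\partial_{z_\ell}$ for $\ell=1,2$ and $X=-iz_\ell\partial_{z_\ell}$ for $\ell=3,4$ on the holomorphic side, one finds
\[
\iota_X\Omega_I=i\,d(z_1z_3+z_2z_4)=d\bigl(i\,w(v)\bigr),
\]
giving $\mu_\C=iw(v)$ as claimed.

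\textbf{Main obstacle.} There is no real mathematical obstacle: both parts reduce to explicit coordinate computations. The only delicate point is the consistent bookkeeping of sign conventions, which must agree among (i) the defining identity $d\mu=\pm\iota_X\omega$, (ii) the convention of sesquilinearity in the Hermitian inner product used to define $\omega_I=\mathrm{Im}(\cdot,\cdot)=-g_0(\cdot,I\cdot)$, and (iii) the identification $\mathfrak u(1)\cong\R$. Once these are fixed, the six verifications (three for tri-holomorphicity, three for the moment maps) each reduce to a few lines.
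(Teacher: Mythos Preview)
The paper does not prove this lemma; it is stated as a standard fact, with the surrounding discussion referring to \cite{Hitchin91,Konno2} for details. Your direct coordinate verification is exactly how one would supply the omitted argument, and your Steps~1 and~2 are correct as written.

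One small inconsistency in Step~3: you declare the convention $d\mu=-\iota_X\omega$, and your computation $\iota_X\omega_I=-\tfrac12\,d(|v|^2-|w|^2)$ then correctly yields $\mu_I=\tfrac12(|v|^2-|w|^2)$. But applying the \emph{same} convention to $\Omega_I=\omega_J+i\omega_K$ would give $d\mu_\C=-\iota_X\Omega_I=-d(i\,w(v))$, hence $\mu_\C=-i\,w(v)$, not $+i\,w(v)$. You effectively switched to $d\mu_\C=+\iota_X\Omega_I$ in the holomorphic computation. This is precisely the bookkeeping hazard you yourself identify as the ``main obstacle''; it is resolved by fixing a single sign convention (or, equivalently, by absorbing the sign into the identification $\mathfrak u(1)\cong\R$ via the generator $-i$ rather than $i$). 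Modulo this adjustment, the argument is complete.
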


\noindent

For $\alpha>0$, the flat hyperk\"ahler structure on $V\oplus V^*$ descends to a complete
hyperk\"ahler metric on the quotient
\[
\mu^{-1}(2\alpha,0)\big/S^1 \;\cong\; T^*\CP^1,
\]
where $\mu=(\mu_I,\mu_\C)$ is the full hyperk\"ahler moment map. 
This hyperk\"ahler quotient  is the \emph{Eguchi--Hanson space}, 
the simplest non-flat asymptotically locally Euclidean hyperk\"ahler manifold. The K\"ahler form with respect to 
the complex structure $I$ is the scaled Fubini-Study metric $\alpha\,\omega_{\mathrm{FS}}$ on the zero section $\CP^1$.

There is also a natural action of $\mathrm{SU}(2)$ on $V\oplus V^*$ given by
\[
A\cdot(v,w)=(Av,w\circ A^{-1}).
\]
For $\xi\in\mathfrak{su}(2)\cong\mathfrak{su}(2)^*$, the associated Killing field is
\[
X_\xi(v,w)=(\xi v,-w\xi).
\]

\begin{lemma}\label{lem:1legmoment}
The $\mathrm{SU}(2)$-action on $V\oplus V^*$ is tri-holomorphic and tri-hamiltonian.
The corresponding moment maps are
\[
\nu_I(v,w)=\tfrac{i}{2}\bigl(v\otimes v^*-w^*\otimes w\bigr)_0,
\]
and
\[
\nu_\C(v,w):=(\nu_J+i\nu_K)(v,w)=-(v\otimes w)_0,
\]
where $(\cdot)_0$ denotes the trace-free part of an element of $\mathfrak{gl}(2,\C)$.
\end{lemma}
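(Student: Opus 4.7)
The plan is to verify the three claims (tri-holomorphicity, the formula for $\nu_I$, and the formula for $\nu_\C$) in turn. Everything reduces to a careful but routine computation using only unitarity of $A\in\SU(2)$ and the explicit forms of $\omega_I$ and $\Omega_I$ from the preceding material.

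First I would check tri-holomorphicity. The action $A\cdot(v,w)=(Av,w\circ A^{-1})$ is $\C$-linear on each factor, so it commutes with $I$. For $J$, expanding
\[
J\bigl(A\cdot(v,w)\bigr)=\bigl(-(w\circ A^{-1})^{*},\,(Av)^{*}\bigr),
\]
the identity $J\circ A=A\circ J$ reduces to the two equalities $(w\circ A^{-1})^{*}=A\,w^{*}$ and $(Av)^{*}=v^{*}\circ A^{-1}$. Both are immediate from $A^{*}=A^{-1}$ and the defining property $(\cdot,w^{*})=w$ of the Riesz map. Since $K=IJ$, $K$ is also preserved, and tri-hamiltonicity reduces to producing moment maps with the claimed formulas.

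Next I would compute $\nu_I$. The infinitesimal generator of $\xi\in\mathfrak{su}(2)$ is $X_\xi(v,w)=(\xi v,-w\xi)$. Transporting the $V^{*}$-component through the antilinear isometry ${}^{*}\colon V^{*}\to V$, the action $w\mapsto -w\xi$ becomes the standard action $w^{*}\mapsto \xi w^{*}$, while the K\"ahler form on $V^{*}$ pulls back with a sign because ${}^{*}$ is antilinear and $\omega_I=\mathrm{Im}(\cdot,\cdot)$. Thus the $\omega_I$-Hamiltonian of $X_\xi$ splits as the difference of two standard unitary $\mathfrak{su}(2)$-moment-map terms on copies of $V$. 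A direct computation, entirely parallel to the K\"ahler moment map of a unitary $\mathfrak{su}(n)$-action on $\C^n$, gives Hamiltonians $\tfrac{i}{2}(v,\xi v)$ and $-\tfrac{i}{2}(w^{*},\xi w^{*})$, which rewrite as $\tr((v\otimes v^{*})\xi)$ and $-\tr((w^{*}\otimes w)\xi)$ via the identity $(v,\xi v)=v^{*}(\xi v)$ and its analogue for $w^{*}$. Pairing through the trace and using tracelessness of $\xi$ to pass freely to the $(\cdot)_{0}$-part then yields $\nu_I(v,w)=\tfrac{i}{2}(v\otimes v^{*}-w^{*}\otimes w)_{0}$; specializing $\xi$ to the generator of the $\mathrm{U}(1)$-subgroup of the previous lemma recovers $\tfrac{1}{2}(|v|^{2}-|w|^{2})$ as a sanity check.

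Finally, the formula for $\nu_\C$ admits a structural derivation. The form $\Omega_I=dz_1\wedge dz_3+dz_2\wedge dz_4$ is exactly the canonical holomorphic symplectic form on $T^{*}V\cong V\oplus V^{*}$ in Darboux coordinates $(v,w)$, and the $\SU(2)$-action is the cotangent lift of its standard action on $V$. The moment map of a cotangent-lifted linear action is classical: $\mu_{\xi}(v,w)=-w(\xi v)=-\tr\bigl((v\otimes w)\,\xi\bigr)$, and identifying $\mathfrak{su}(2)^{*}$ with traceless endomorphisms through the trace pairing yields $\nu_\C(v,w)=-(v\otimes w)_{0}$ as stated.

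The hard part, if any, will be purely bookkeeping: the antilinearity of ${}^{*}$ flips the sign of the K\"ahler form when transported from $V^{*}$ to $V$, and the conventions $d\mu_\xi=\pm\iota_{X_\xi}\omega$ together with the identification $\mathfrak{su}(2)\cong\mathfrak{su}(2)^{*}$ must be tracked consistently, using the abelian formulas for $\mu_I$ and $\mu_\C$ from the preceding lemma as a sign check.
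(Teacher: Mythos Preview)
Your proof plan is correct and complete. The paper itself states this lemma without proof, treating it as a standard computation (just as it does for the preceding $S^1$-lemma), so there is no argument to compare against. Your verification of tri-holomorphicity via the unitarity identities $(Av)^*=v^*\circ A^{-1}$ and $(w\circ A^{-1})^*=Aw^*$, the $\nu_I$-computation by transporting along the antilinear Riesz map, and the recognition of $\Omega_I$ as the canonical form on $T^*V$ so that $\nu_\C$ is the cotangent-lift moment map, are all sound and constitute exactly the routine check the paper omits.
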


\subsubsection{Twistor lines}\label{sec:hptwistor}
Let $(x,y)\in \C^2\setminus\{0\}$ and 
\[a=\frac{\sqrt{2\alpha+|x|^2+|y|^2}}{\sqrt{|x|^2+|y|^2}}>0.\]
Define
\begin{equation}\label{eq:vwxy}
v=a\begin{pmatrix}x\\y\end{pmatrix}\in V,
\qquad
w=(y,-x)\in V^*.
\end{equation}
Then
\[
\mu_I(v,w)=2\alpha,
\qquad
\mu_\C(v,w)=0.
\]

Define
\begin{equation}\label{def:mfA}
\begin{split}
\mathfrak A
:=&\lambda^{-1}\nu_\C(v,w)-2i\,\nu_I(v,w)-\lambda\,\nu_\C(w^*,v^*)\\
=&\lambda^{-1}a\begin{pmatrix}-xy& x^2\\y^2&xy \end{pmatrix}
+\frac{1 + a^2}{2}\begin{pmatrix}|x|^2-|y|^2& 2x\bar y\\ 2y\bar x& |y|^2-|x|^2 \end{pmatrix}
+\lambda a\begin{pmatrix} \bar x \bar y &\bar y^2\\-\bar x^2&-\bar x\bar y  \end{pmatrix}
\end{split}
\end{equation}
It satisfies
\begin{equation}\label{eq:SDforRes}
\det(\mathfrak A)=-\alpha^2,
\qquad
\mathfrak A^*=-\mathfrak A,
\end{equation}
where, for $A\colon\C^*\to\mathfrak{sl}(2,\C)$,
\begin{equation}\label{defstar}
A^*(\lambda):=-\overline{A(-\bar\lambda^{-1})}^{\,T}.
\end{equation}
Its quasi-parabolic line, i.e. the kernel of the $\lambda^{-1}$-part,  is the span of $v.$
\begin{remark}\label{rem:twisted}
Up to the factor $i$, the matrix $\mathfrak A$ coincides with the twisted-holomorphic
moment map for the complexified $\mathrm{SL}(2,\C)$-action with respect to the
twisted-holomorphic symplectic form
\[
\Omega_\lambda=\lambda^{-1}\Omega_I-2\omega_I-\lambda\,\bar\Omega_I.
\]
This is the standard description of the twistor space of a hyperk\"ahler quotient;
see \cite{HKLR}. For the  twistorial description of (co)adjoint orbits
and the corresponding hyperk\"ahler structures, see also \cite{Biquard98}.
\end{remark}

\subsubsection{Bottom Component}\label{rem:Scaling}
Consider $\mathfrak A$ in \eqref{def:mfA} when
scaling the vector $\tilde w= r(y,-  x)$ to zero. In the limit 
$r\to0$  we obtain
\[\lim_{r\to0}\mathfrak A= \frac{\alpha}{|x|^2+|y|^2}\,
\begin{pmatrix}|x|^2-|y|^2&2x\bar y\\2 y\bar x&|y|^2-|x|^2
 \end{pmatrix}
\in i\mathfrak{su}(2)\]
which is independent of $\lambda.$ Its eigenvalues are $\pm\alpha,$ and the eigenline with respect to the eigenvalue $\alpha$ is 
given by  the quasi-parabolic line, i.e. it is spanned by the limit 
\[\lim_{r\to0}v=\sqrt{2\alpha}
\begin{pmatrix}x\\y\end{pmatrix}\neq0.\]

\subsection{The hyperpolygon space}

We next recall the \emph{hyperpolygon spaces} introduced by Konno \cite{Konno2} as
hyperk\"ahler quotients of a product of Eguchi--Hanson spaces. Equivalently, 
they can be obtained as hyperk\"ahler quotients of flat quaternionic vector spaces.
Let $n\geq 4$ and consider
\[
M_n := \bigoplus_{j=1}^n \bigl(V\oplus V^*\bigr)
\;\cong\; \C^{4n},
\]
equipped with the product hyperk\"ahler structure
$
(g_0,I,J,K),
$
where each summand $V\oplus V^*$ carries the flat hyperk\"ahler structure described
above.
We write points of $M_n$ as
\[
(v,w)=\bigl((v_1,w_1),\dots,(v_n,w_n)\bigr),
\qquad v_j\in V,\; w_j\in V^*.
\]

Let
\[
\mathrm{G}:=\mathrm{SU}(2)\times (S^1)^n
\]
act on $M_n$ as follows.
The $\mathrm{SU}(2)$-factor acts diagonally via the action described in the previous
subsection,
\[
A\cdot (v_j,w_j)=(Av_j,w_j\circ A^{-1}),
\]
while the $j$-th $S^1$-factor acts on the $j$-th summand by
\[
\varphi_j\cdot(v_j,w_j)=(\varphi_j v_j,\bar\varphi_j w_j),
\]
and trivially on the remaining summands.
Altogether, this defines a tri-holomorphic action of $\mathrm{G}$ on $M_n$.

Identifying $\mathfrak{u}(1)^n\cong\R^n$ and using the notation of the previous subsection,
the hyperk\"ahler moment map for the $(S^1)^n$-action is
$
\mu=(\mu_I,\mu_\C),
$
with components
\[
\mu_I(v,w)=\bigl(\tfrac12(|v_1|^2-|w_1|^2),\dots,\tfrac12(|v_n|^2-|w_n|^2)\bigr),
\]
and
\[
\mu_\C(v,w)=\bigl(i\,w_1(v_1),\dots,i\,w_n(v_n)\bigr).
\]

Similarly, the $\mathrm{SU}(2)$-moment map is the sum of the individual contributions,
$
\nu=(\nu_I,\nu_\C),
$
where
\[
\nu_I(v,w)
=\tfrac{i}{2}\sum_{j=1}^n\bigl(v_j\otimes v_j^*-w_j^*\otimes w_j\bigr)_0,
\]
and
\[
\nu_\C(v,w)
=-\sum_{j=1}^n (v_j\otimes w_j)_0.
\]

\begin{lemma}
The $\mathrm{G}=\mathrm{SU}(2)\times (S^1)^n$-action on $M_n$ is tri-hamiltonian with moment map
\[
(\nu,\mu)\colon M_n\to
\mathfrak{su}(2)^*\oplus(\R^n\oplus\C^n).
\]
\end{lemma}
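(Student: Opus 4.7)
The statement is a product/direct-sum assembly of the two Lemmas already established for the single-summand case, so my plan is to reduce to those and then verify the formulas by additivity of moment maps.

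First, I would argue tri-holomorphicity. The hyperk\"ahler structure $(g_0,I,J,K)$ on $M_n$ is by construction the product of the flat hyperk\"ahler structures on each summand $V\oplus V^*$. The diagonal $\mathrm{SU}(2)$-action preserves the structure on each summand by Lemma~\ref{lem:1legmoment}, and each $S^1_j$ preserves the structure on the $j$-th summand by the preceding $S^1$-lemma and acts trivially on the remaining summands. Hence every element of $\mathrm{G}=\mathrm{SU}(2)\times(S^1)^n$ preserves $I$, $J$, $K$.

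Next, I would assemble the moment maps. For the $(S^1)^n$-factor, the action decouples: $S^1_j$ acts only on the $j$-th summand with Killing field $X_{(v_j,w_j)}=(iv_j,-iw_j)$. The single-summand $S^1$-lemma then gives the $j$-th component of $\mu_I$ and $\mu_\C$ as
\[
\mu_{I,j}(v,w)=\tfrac12\bigl(|v_j|^2-|w_j|^2\bigr),\qquad
\mu_{\C,j}(v,w)=i\,w_j(v_j),
\]
and triviality on the other summands implies no other contribution. For the $\mathrm{SU}(2)$-factor I would use the standard fact that the moment map for a diagonal Hamiltonian $G$-action on a direct sum of Hamiltonian $G$-spaces is the sum of the summands' moment maps: given $\xi\in\mathfrak{su}(2)$, the generating vector field on $M_n$ is the direct sum $\bigoplus_j X_\xi^{(j)}$ of the individual generating fields, and therefore
\[
\iota_{X_\xi}\omega_\bullet=\sum_j \iota_{X_\xi^{(j)}}\omega_\bullet^{(j)}
=\sum_j d\langle \nu_\bullet^{(j)},\xi\rangle
=d\Bigl\langle \sum_j \nu_\bullet^{(j)},\xi\Bigr\rangle
\]
for $\bullet=I,J,K$. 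Substituting the formulas from Lemma~\ref{lem:1legmoment} yields the stated expressions for $\nu_I$ and $\nu_\C$.

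There is essentially no analytical obstacle: smoothness and equivariance of $(\nu,\mu)$ follow summand-wise from the single-factor lemmas, and the only thing to check by inspection is that the $\mathrm{SU}(2)$- and $S^1_j$-actions mutually commute, which is clear since $w_j\circ A^{-1}$ and scalar multiplication on $w_j$ commute in $V^*$ (and likewise on $V$). The only place where I would be a bit careful is the normalization and sign convention in the identifications $\mathfrak{u}(1)\cong\R$ and $\mathfrak{su}(2)\cong\mathfrak{su}(2)^*$, to ensure the trace-free projection $(\cdot)_0$ and the factor $\tfrac{i}{2}$ appear consistently with the prescriptions of the preceding lemmas; once those are fixed, the assembly is immediate.
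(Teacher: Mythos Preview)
Your proposal is correct and is exactly the argument the paper has in mind: the lemma is stated in the paper without proof, as an immediate consequence of the two single-summand lemmas and additivity of moment maps under direct sums and diagonal actions. There is nothing to add; your reduction to Lemma~\ref{lem:1legmoment} and the preceding $S^1$-lemma, together with the observation that the actions commute, is precisely why the authors omit the proof.
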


Fix a \emph{hyperpolygon weight} vector
\[
\alpha=(\alpha_1,\dots,\alpha_n)\in\R_{>0}^n.
\]
The \emph{hyperpolygon space} associated to $\alpha$ is defined as the
hyperk\"ahler quotient
\[
\mathcal X_\alpha
:=\nu^{-1}(0)\cap
\mu_I^{-1}(2\alpha)\cap
\mu_\C^{-1}(0)
\;\big/\; \mathrm{G}.
\]

Equivalently, since $(S^1)^n$ and $\mathrm{SU}(2)$ commute,
$\mathcal X_\alpha$ may be viewed as the hyperk\"ahler quotient of $M_n$ by
$(S^1)^n$ at level $(2\alpha,0)$, followed by reduction by
$\mathrm{SU}(2)$ at level $0$.
By construction, $\mathcal X_\alpha$ carries a natural hyperk\"ahler metric induced from the flat metric on $M_n$.
The identification of this hyperk\"ahler quotient with a complex-analytic quotient,
and hence the global description of $\mathcal X_\alpha$ as a smooth hyperk\"ahler
manifold, relies on a Kempf--Ness--type result \cite{HKLR, Kirwan, Konno2}, which we recall below.

\subsection{Complex orbits, stability, and the hyperk\"ahler metric}
We now  discuss the notion of stability and compare the description of 
hyperpolygon spaces as hyperk\"ahler quotients with their interpretation as complex-analytic moduli spaces.
Recall the complex moment maps
\[
\mu_\C\colon M_n\to\C^n,
\qquad
\nu_\C\colon M_n\to\mathfrak{sl}(2,\C),
\]
and consider the complex zero set
\[
\hat{\mathcal X}:=\nu_\C^{-1}(0)\cap\mu_\C^{-1}(0).
\]
For a point
\[
(v,w)=\bigl((v_1,w_1),\dots,(v_n,w_n)\bigr)\in\hat{\mathcal X},
\]
the defining equations are
\[
w_j(v_j)=0,
\qquad
\sum_{j=1}^n (v_j\otimes w_j)_0=0.
\]
These equations are preserved by the holomorphic action of
\[
\mathrm{G}^\C=\mathrm{SL}(2,\C)\times(\C^*)^n,
\]
and hence determine a complex orbit
\[
\mathcal O_{(v,w)}:=\mathrm{G}^\C\cdot(v,w)\subset\hat{\mathcal X}.
\]

Fix a hyperpolygon weight vector $\alpha=(\alpha_1,\dots,\alpha_n)\in\R_{>0}^n$.
As shown by Konno \cite{Konno2}, the choice of $\alpha$ determines a notion of
(semi)stability for the $\mathrm{G}^\C$-action on $\hat{\mathcal X}$. We follow the 
stability notion introduced by Konno as adapted by Godinho--Mandini \cite{GoMa} (see \cite{Klyachko} for its introduction on moduli of polygons). This definition is equivalent to Nakajima's original definition; see \cite{Nak}.

A hyperpolygon weight vector $\alpha$ is called \emph{generic} if for every  subset
$S\subset\{1,\dots,n\}$ one has
\[
\epsilon_{S}(\alpha):=\sum_{j\in S}\alpha_j-\sum_{j\notin S}\alpha_j\neq0.
\]
In the following, we restrict to generic hyperpolygon weights.

A subset $S\subset\{1,\dots,n\}$ is called \emph{$\alpha$-short} if $\epsilon_{S}(\alpha)<0$,
and $\alpha$-\emph{long} otherwise.
Given $(v,w)\in\hat{\mathcal X}$, a subset $S$ is called \emph{straight} at $(v,w)$ if
\[
\det(v_j,v_k)=0\qquad\text{for all }j,k\in S.
\]

\begin{definition}
A point $(v,w)=((v_1,w_1),\dots,(v_n,w_n))\in\hat{\mathcal X}$
is called \emph{$\alpha$-stable} if the following conditions hold:
\begin{itemize}
\item $v_j\neq0$ for all $j=1,\dots,n$;
\item if $S\subset\{1,\dots,n\}$ is straight at $(v,w)$ and $w_j=0$ for all
$j\notin S$, then $S$ is $\alpha$-short, i.e.
\[
\sum_{i\in S}\alpha_i \;<\; \sum_{i\notin S}\alpha_i.
\]
\end{itemize}
If at least one of the previous
inequalities is non-strict
then $(v,w)$ is called \emph{$\alpha$-semistable}.
This can only happen for non-generic $\alpha$.
\end{definition}
We denote by
\[
\hat{\mathcal X}_\alpha^{\mathrm{s}},
\qquad
\hat{\mathcal X}_\alpha^{\mathrm{ss}}
\]
the $\alpha$-stable and $\alpha$-semistable loci, respectively.
If $\alpha$ is generic, then
$\hat{\mathcal X}_\alpha^{\mathrm{ss}}=\hat{\mathcal X}_\alpha^{\mathrm{s}}$.
It is known (see \cite{GoMa} and references therein) that in this case
$\hat{\mathcal X}_\alpha^{\mathrm{s}}$ is smooth.

It remains to solve the real moment map equations 
$\nu_I=0$ and $\mu_I=2\alpha$ 
inside a given complex orbit \(\mathcal O_{(v,w)}\).
In this setting, Konno \cite{Konno2} proved the following Kempf--Ness--type result.

\begin{proposition}\label{prop:HK-KN}
Let $(v,w)\in \hat{\mathcal X}$ and let $\alpha$ be generic.
Then the following are equivalent:
\begin{itemize}
\item $(v,w)$ is $\alpha$-stable for the $\mathrm{G}^\C$-action;
\item the complex orbit $\mathcal O_{(v,w)}$ contains a representative satisfying
the real moment map equations, i.e.\ there exists $g\in \mathrm{G}^\C$ such that
\[
g\cdot(v,w)\in \nu_I^{-1}(0)\cap\mu_I^{-1}(2\alpha).
\]
\end{itemize}
Moreover, such a representative is unique up to the action of the compact group
\[
\mathrm{G}=\mathrm{SU}(2)\times(S^1)^n.
\]
\end{proposition}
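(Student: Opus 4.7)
The plan is to apply the Kempf--Ness theorem in the hyperk\"ahler quotient setting. Since the complex moment maps $\mu_\C$ and $\nu_\C$ are equivariant under the complex reductive group $\mathrm{G}^\C = \SL(2,\C) \times (\C^*)^n$, the complex zero set $\hat{\mathcal X}$ is a $\mathrm{G}^\C$-invariant affine subvariety of $M_n$. The proposition therefore reduces to a Kempf--Ness-type statement for the reductive action of $\mathrm{G}^\C$ on $\hat{\mathcal X}$ with the linearization twisted by the character associated to $2\alpha$. I would verify the two implications and uniqueness in sequence.

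For the easy direction (existence implies stability), suppose $(v,w) \in \nu_I^{-1}(0) \cap \mu_I^{-1}(2\alpha)$. The identity $\tfrac12(|v_j|^2 - |w_j|^2) = 2\alpha_j > 0$ immediately forces $v_j \neq 0$ for all $j$. For the straight-subset condition, assume there is a straight subset $S$ with $w_j = 0$ for $j \notin S$, and let $\ell \subset V$ be the common line spanned by $\{v_j : j \in S\}$. Consider the one-parameter subgroup of $\mathrm{G}^\C$ generated by the traceless element of $\mathfrak{sl}(2,\C)$ with eigenvalue $+1$ on $\ell$ and $-1$ on a complement, suitably compensated by weights on the $(\C^*)^n$-factors so that the combinatorial contribution to $\mu_I$ integrates to $\epsilon_S(\alpha)$. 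A direct computation using Lemma~\ref{lem:1legmoment} shows that the Hilbert--Mumford weight of $\|\mu_I - 2\alpha\|^2 + \|\nu_I\|^2$ along this one-parameter subgroup is a positive multiple of $\epsilon_S(\alpha)$. Since $(v,w)$ is a global minimum of this functional along the orbit, the weight must be non-positive, so $S$ must be $\alpha$-short.

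For the hard direction, I would consider the Kempf--Ness-type functional
\[
f(g) := \tfrac12\|\mu_I(g \cdot (v,w)) - 2\alpha\|^2 + \tfrac12\|\nu_I(g \cdot (v,w))\|^2
\]
on the symmetric space $\mathrm{G}^\C / \mathrm{G}$; by $\mathrm{G}^\C$-equivariance of the complex moment maps, the orbit of $(v,w)$ remains in $\hat{\mathcal X}$. The functional $f$ is convex along geodesics in $\mathrm{G}^\C/\mathrm{G}$, and its critical points coincide with the zeros of $(\mu_I - 2\alpha, \nu_I)$ inside the orbit. The task is to show that the infimum of $f$ is attained. Given a minimizing sequence $g_k = u_k \exp(\xi_k)$ with $u_k \in \mathrm{G}$ and $\xi_k \in \sqrt{-1}\,\mathfrak{g}$ via the Cartan decomposition, either $\|\xi_k\|$ remains bounded and a subsequence converges to an attaining $g_\infty$, yielding a zero of the moment map at $g_\infty \cdot (v,w)$ by the infinitesimal Kempf--Ness identity, or else $\|\xi_k\| \to \infty$ and $\xi_k/\|\xi_k\|$ subconverges to a unit direction $\xi_\infty$. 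In the latter case, $\xi_\infty$ selects a splitting of $V$ and thereby a straight subset $S$; the associated asymptotic one-parameter subgroup gives non-positive Hilbert--Mumford weight, which by the reverse of Step~1 contradicts the $\alpha$-stability of $(v,w)$. Uniqueness up to the action of $\mathrm{G}$ then follows from strict convexity of $f$ along every nontrivial geodesic of $\mathrm{G}^\C/\mathrm{G}$.

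The main obstacle is the noncompact analysis in the hard direction, specifically showing that a minimizing sequence either converges or produces a genuine destabilizing one-parameter subgroup. The subtle point is the rationality of the limiting Cartan direction $\xi_\infty$, which must be adjusted, by projecting onto an integral lattice in the coweight space of $\mathrm{G}^\C$, to yield an honest one-parameter subgroup with the combinatorial structure of a straight subset $S$. This reduction from asymptotic analysis to the algebraic Hilbert--Mumford criterion is the technical heart of Konno's argument, and crucially exploits the genericity of $\alpha$ to ensure that the resulting weight $\epsilon_S(\alpha)$ is nonzero.
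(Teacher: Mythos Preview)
The paper does not actually prove this proposition: it is stated as a result of Konno~\cite{Konno2} and cited without argument. So there is no paper proof to compare against; I can only assess your sketch on its own terms.

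Your overall architecture is the correct Kempf--Ness strategy, but there is a genuine gap in the hard direction: the functional you propose,
\[
f(g)=\tfrac12\|\mu_I(g\cdot(v,w))-2\alpha\|^2+\tfrac12\|\nu_I(g\cdot(v,w))\|^2,
\]
is the \emph{norm-squared of the shifted moment map}, not the Kempf--Ness functional. These are different objects with different properties. The norm-squared functional is not in general convex along geodesics of $\mathrm{G}^\C/\mathrm{G}$, and its critical points on an orbit are the points where the value $(\mu_I-2\alpha,\nu_I)$ lies in the infinitesimal stabilizer, not necessarily the zeros. Both of your key claims---convexity and ``critical points coincide with zeros''---fail for this choice. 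The functional you want is the log-norm functional
\[
\Psi(e^{i\xi})=\tfrac12\log\bigl\|e^{i\xi}\cdot(v,w)\bigr\|^2-\langle 2\alpha,\xi\rangle,
\qquad \xi\in\mathfrak{g},
\]
whose gradient on $\mathrm{G}^\C/\mathrm{G}$ is precisely the shifted moment map and which \emph{is} convex along geodesics; with this replacement your minimizing-sequence argument and the passage to a destabilizing one-parameter subgroup via the limiting Cartan direction go through as written. The easy direction and the uniqueness-by-strict-convexity argument are fine once the correct functional is in place.
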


In particular, for $\alpha$-stable points the intersection
\[
\mathcal O_{(v,w)}\cap\bigl(\nu_I^{-1}(0)\cap\mu_I^{-1}(2\alpha)\bigr)
\]
is a single $\mathrm{G}$-orbit.
Consequently, for generic $\alpha$ the hyperpolygon space admits the equivalent
descriptions
\[
\mathcal X_\alpha
=
\nu_I^{-1}(0)\cap\mu_I^{-1}(2\alpha)\cap\hat{\mathcal X}\big/\mathrm{G}
\;\cong\;
\hat{\mathcal X}_\alpha^{\mathrm{s}}\big/\mathrm{G}^\C,
\]
and the resulting quotient is a smooth hyperk\"ahler manifold.
Geometrically, this description realizes $\mathcal X_\alpha$ as a moduli space of
weighted polygons in $\R^3$ equipped with additional ``momentum'' variables,
which motivates the terminology \emph{hyperpolygon space}.

\begin{remark}\label{rem:left-right-convention}
We emphasize a difference of conventions between the hyperpolygon and Higgs bundle
hyperk\"ahler quotient constructions.
On the hyperpolygon side, the $\mathrm{SL}(2,\C)$-action on the data $(v_j,w_j)$ is
written as a left action, as is customary in the hyperk\"ahler quotient and quiver
variety literature.
On the Higgs bundle side, by contrast, the $\mathrm{SL}(2,\C)$-action arises from the
gauge group and is therefore naturally a right action.
These conventions are equivalent: passing from a left to a right action amounts to
composition with inversion in $\mathrm{SL}(2,\C)$.
All constructions and statements below are independent of this choice of convention.
\end{remark}

\subsubsection*{Hyperk\"ahler metric}

We now describe the hyperk\"ahler metric on the hyperpolygon space
$\mathcal X_\alpha$ from the twistorial point of view in the spirit of 
\cite{Biquard98}. The description below combines the hyperk\"ahler quotient
construction of hyperpolygon spaces \cite{Konno2} with the general fact that
hyperk\"ahler quotients inherit the full twistor family of holomorphic symplectic
forms \cite{HKLR}. This interpretation provides a convenient
reformulation of the standard hyperk\"ahler reduction picture which makes the
dependence on the twistor parameter explicit.
In addition, we use the identification of the holomorphic
Kirillov--Kostant--Souriau form on (co)adjoint orbits with the canonical
holomorphic symplectic form on (twisted) cotangent bundles.
For a single factor $V\oplus V^*$, this reduces to the explicit description of the
$\lambda$-dependent holomorphic moment map recalled in
Remark~\ref{rem:twisted}; see also \cite[Th\'eor\`eme~4]{Biquard98}.

Fix a generic hyperpolygon weight vector $\alpha=(\alpha_1,\dots,\alpha_n)\in\R_{>0}^n$ and
consider
\[
M_n:=\bigl(V \oplus V^*\bigr)^n\]
with its flat hyperk\"ahler structure $(g,I,J,K)$ and the tri-hamiltonian action of
\[
\mathrm{G}=\mathrm{SU}(2)\times (S^1)^n
\]
as described above.
Let $\omega_I$ denote the K\"ahler form and
$\Omega_I:=\omega_J+i\omega_K$ the $I$-holomorphic symplectic form on $M_n$.
For $\lambda\in\C^*$, consider the \emph{twisted-holomorphic symplectic form}
\[
\Omega_\lambda:=\lambda^{-1}\Omega_I-2\,\omega_I-\lambda\,\overline{\Omega}_I.
\]

For $(v_j,w_j)\in V\oplus V^*$, let
\[
\nu_I(v_j,w_j)=\frac{i}{2}\,(v_j\otimes v_j^* - w_j^*\otimes w_j)_0,\qquad
\nu_\C(v_j,w_j)=-(v_j\otimes w_j)_0
\]
be the $\mathrm{SU}(2)$ moment maps as in Lemma~\ref{lem:1legmoment}.  
\begin{remark}\label{rem:momentt0}
The formal similarity  of the moment maps for the hyperk\"ahler reduction of the hyperpolygon spaces, with the moment maps
of the Hitchin equations has been observed in various places, see for example \cite{BiquardMinerbe} and \cite{RS21}.
\end{remark}

Define the $\lambda$--dependent $\mathfrak{sl}(2, \mathbb C)$-matrix
\begin{equation}\label{eq:mafraA}
\mathfrak A(v_j,w_j):=\;\lambda^{-1}\nu_\C(v_j,w_j)\;-\;2i\,\nu_I(v_j,w_j)\;-\;\lambda\,\nu_\C(w_j^*,v_j^*).
\end{equation}

If $(v_j,w_j)$ satisfies the $(S^1)$-moment map equations
$\mu_I(v_j,w_j)=2\alpha_j$ and $\mu_\C(v_j,w_j)=0$, then
$\mathfrak A(v_j,w_j)$ lies in the semisimple adjoint orbit in
$\mathfrak{sl}(2,\C)$ with eigenvalues $\pm\alpha_j$.
In this case, the map
\[(v_j,w_j)\mapsto i\;\mathfrak A(v_j,w_j)\]
is the twisted-holomorphic moment map (see \cite[Section 3 (F)]{HKLR}) for the complexified
$\mathrm{SL}(2,\C)$-action with respect to $\Omega_\lambda$.

\begin{theorem}\label{thm:twistor-hyperpolygon-orbits}
Let $\alpha$ be generic and let $p=(v,w)\in\mathcal X_\alpha$ be a point in the
hyperpolygon space.
Then the associated twistor line $s_p$ is given by
\[(\,\mathfrak A(v_1,w_1),\dots,\mathfrak A(v_n,w_n)\,).\] 
Moreover, the twisted-holomorphic symplectic form
 on $\mathcal X_\alpha$ is obtained by
holomorphic symplectic reduction of $\Omega_\lambda$ with respect to the
$\lambda$-dependent moment map
\[
\mu_\lambda(v,w)\;=\; i\;\sum_{j=1}^n \mathfrak A(v_j,w_j).\]
\end{theorem}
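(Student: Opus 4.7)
My plan rests on two organizing principles. The first is the general result from \cite{HKLR} that twistor spaces commute with hyperk\"ahler reduction: the twistor family of holomorphic symplectic forms on $\mathcal X_\alpha$ is obtained by $\lambda$-parametrized holomorphic symplectic reduction of $(M_n,\Omega_\lambda)$ with respect to the complexified group $\mathrm G^\C$. The second is the identification recalled in Remark~\ref{rem:twisted} (and spelled out for the adjoint-orbit case in \cite{Biquard98}) of $i\,\mathfrak A(v_j,w_j)$ with the twisted-holomorphic $\mathrm{SL}(2,\C)$-moment map on a single Eguchi--Hanson factor $V\oplus V^*$ with respect to $\Omega_\lambda$, once the $S^1$-equations $\mu_I=2\alpha_j$ and $\mu_\C=0$ are imposed.

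First I would fix a representative $(v,w)\in\nu^{-1}(0)\cap\mu_I^{-1}(2\alpha)\cap\mu_\C^{-1}(0)$ of $p\in\mathcal X_\alpha$, whose existence and $\mathrm G$-uniqueness is the content of Proposition~\ref{prop:HK-KN}, and then verify three properties of the matrices $\mathfrak A(v_j,w_j)$ by direct calculation from \eqref{eq:mafraA}: that $\det\mathfrak A(v_j,w_j)=-\alpha_j^2$ (using the $S^1$-moment-map constraints, analogous to \eqref{eq:SDforRes}), that $\mathfrak A(v_j,w_j)^{*}=-\mathfrak A(v_j,w_j)$ in the sense of~\eqref{defstar} (the real structure), and that $\sum_{j}\mathfrak A(v_j,w_j)=0$ as a Laurent polynomial in $\lambda$ (using $\nu_I=\nu_\C=0$). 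These three features identify the Fuchsian system
\[
\nabla^\lambda\;=\;d\;+\;\sum_{j=1}^n \mathfrak A(v_j,w_j)\,\frac{dz}{z-p_j}
\]
with a $\tau$-equivariant family of logarithmic $\mathrm{SL}(2,\C)$-connections on the trivial bundle, of residue eigenvalues $\pm\lambda\alpha_j$ and regular at infinity, hence with a $\lambda$-family of points in $\pi^{-1}(\lambda)$ via the identification~\eqref{eq:lambdaderham}.

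Next I would argue that $\lambda\mapsto\nabla^\lambda$ is precisely the twistor line $s_p$ of the hyperk\"ahler quotient $\mathcal X_\alpha$. By the HKLR reduction principle, $s_p(\lambda)$ is the class of $(v,w)$ in the $\lambda$-parametrized $\mathrm G^\C$-reduction, whose $\mathrm{SL}(2,\C)$-factor is precisely the residue tuple $\bigl(\mathfrak A(v_1,w_1),\dots,\mathfrak A(v_n,w_n)\bigr)$ of $\nabla^\lambda$. The extension of the family to $\lambda=0,\infty$ —where~\eqref{eq:lambdaderham} breaks down— is supplied by the scaling argument of Section~\ref{rem:Scaling}: the $\lambda^{-1}$-coefficient $\nu_\C(v_j,w_j)=-(v_j\otimes w_j)_0$ recovers the strongly parabolic Higgs residue on $\Sigma$, and its conjugate delivers the corresponding data on $\bar\Sigma$. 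The second assertion is then a direct consequence of the same reduction principle applied to the form $\Omega_\lambda$ itself: reduction along the $\lambda$-dependent moment map $\mu_\lambda=i\sum_j\mathfrak A(v_j,w_j)$ yields the twisted-holomorphic symplectic form on $\mathcal X_\alpha$, and on the Fuchsian locus it matches the residue formula \eqref{eq:Goldman-Kirilov} after pushing forward tangent vectors through the residue map, the $\mathrm{SL}(2,\C)$-reduction accounting for the constraint $\sum_j X_j=0$ appearing in the Goldman presentation.

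The step I expect to require the most care is simultaneously extracting the three scalar constraints $\nu_\C(v,w)=0$, $\nu_I(v,w)=0$ and $\overline{\nu_\C(v,w)}=0$ from the single Laurent identity $\sum_j\mathfrak A(v_j,w_j)=0$, whose coefficients in $\lambda^{-1}$, $\lambda^{0}$ and $\lambda^{1}$ encode them respectively; one must check that the twisted-moment-map equation on $M_n$ really factors into the individual adjoint-orbit contributions in each $\lambda$-power separately. Once this bookkeeping is in place, the remainder of the argument is a direct rewriting of standard hyperk\"ahler reduction in the language of Fuchsian systems.
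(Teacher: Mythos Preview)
Your approach is essentially the one the paper indicates, though the paper gives no proof of this theorem at all: the paragraph preceding the statement declares it to be a reformulation obtained by combining the HKLR principle that twistor families descend to hyperk\"ahler quotients with the per--factor identification of $i\,\mathfrak A(v_j,w_j)$ as the twisted-holomorphic $\mathrm{SL}(2,\C)$--moment map (Remark~\ref{rem:twisted}, \cite{Biquard98}). Your write-up fills in exactly those details.

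Two comments. First, the Fuchsian-system interpretation you build ($\nabla^\lambda=d+\sum_j\mathfrak A(v_j,w_j)\tfrac{dz}{z-p_j}$) is not part of the theorem: the twistor line $s_p$ lives in the twistor space of $\mathcal X_\alpha$ and is described purely by the tuple of orbit elements $(\mathfrak A(v_1,w_1),\dots,\mathfrak A(v_n,w_n))$; the identification with residue data of a Fuchsian system is the \emph{subsequent} observation the paper makes after the theorem, via \eqref{eq:Goldman-Kirilov} and the Godinho--Mandini correspondence. Folding this into the proof is harmless but obscures the logic.

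Second, your appeal to Section~\ref{rem:Scaling} for the extension to $\lambda=0,\infty$ is a misreading: that section treats the limit $w\to 0$ (the bottom component of the polygon space), not $\lambda\to 0$. The extension to $\lambda=0$ is the trivial observation that $\lambda\,\mathfrak A(v_j,w_j)$ is polynomial in $\lambda$, with value $\nu_\C(v_j,w_j)$ at $\lambda=0$; the extension to $\lambda=\infty$ follows from the reality condition $\mathfrak A^*=-\mathfrak A$ you already checked. The step you flag as delicate---reading off the three moment-map equations from the Laurent coefficients of $\sum_j\mathfrak A(v_j,w_j)=0$---is in fact immediate, since the $\lambda^{-1},\lambda^0,\lambda^1$ coefficients of \eqref{eq:mafraA} are by definition $\nu_\C$, $-2i\nu_I$ and $-\overline{\nu_\C}^{\,T}$.
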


Theorem~\ref{thm:twistor-hyperpolygon-orbits} together with 
\eqref{eq:varphilambda}  gives an expression for the twisted-holomorphic symplectic form 
for the hyperpolygon space
 in terms of the Kirillov--Kostant--Souriau form on adjoint orbits.
We have seen in \eqref{eq:Goldman-Kirilov}  that
 the twisted-holomorphic symplectic form 
 on  the moduli of Fuchsian $\lambda$-connections
 is also determined in terms of residues.
This 
provides the link between the twistor geometry of hyperpolygon spaces and that of
parabolic Higgs bundle moduli spaces as discussed in Section~\ref{sec:twis}.

\subsection{Hyperpolygons and strongly parabolic Higgs bundles}
We recall the Godinho--Mandini correspondence \cite{GoMa} between the holomorphic quotients
$\hat{\mathcal X}_\alpha^{\mathrm{s}}/\mathrm{G}^\C$ and the moduli space of stable strongly parabolic
Higgs bundles on the trivial holomorphic bundle over $\CP^1$.
We fix  a divisor ${\bf D}= p_1+\dots+p_n$ on $\C\subset\CP^1$. To 
\[
(v,w)\in\hat{\mathcal X}
\]
one associates a rank-$2$ quasi-parabolic Higgs bundle $(E,\Phi)$ as follows.
The underlying holomorphic bundle is the trivial holomorphic  bundle
\[
E=\CP^1\times\C^2,
\]
equipped at each marked point $p_j\in{\bf D}$ with the quasi-parabolic line
\[
\ell_j:=\mathrm{Span}(v_j)\subset E_{p_j}.
\]
The Higgs field $\Phi$ is the meromorphic $\mathfrak{sl}(2,\C)$-valued $1$-form with
simple poles at the marked points determined by its residues
\[
\Res_{p_j}(\Phi)=(v_j\otimes w_j)_0.
\]
The condition $w_j(v_j)=0$ implies that each residue is nilpotent and preserves the
quasi-parabolic line, so that $\Phi$ is \emph{strongly parabolic}.
Moreover, the equation
\[
\sum_{j=1}^n (v_j\otimes w_j)_0=0
\]
guarantees, by the residue theorem, that $\Phi$ has no further singularity at $\infty\in\CP^1$. 
In order to complete the construction of a parabolic Higgs bundle, the hyperpolygon weight vector 
must additionally satisfy $0 < \alpha_j <\frac{1}{2}$ for $j=1,\dots,n$. 

\begin{proposition}\label{pro:Goma}
{\cite{GoMa}}
If $\alpha$ is small, then a point $(v,w)\in\hat{\mathcal X}$ is $\alpha$-stable in the sense of hyperpolygons if and only if the associated
parabolic Higgs bundle $(E,\Phi)$ is stable with respect to the parabolic weight vector $\alpha$ (see Section~\ref{sec:parahiggs}).
\end{proposition}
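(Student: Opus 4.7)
The strategy is to classify all $\Phi$-invariant line subbundles $L \subset E = \Ocal_{\CP^1}^{\oplus 2}$, compute their parabolic degrees, and match destabilizing objects with the combinatorial destabilizers (straight subsets) appearing in the hyperpolygon stability definition. Throughout, the smallness condition $\sum_{j=1}^n \alpha_j < 1$ from \eqref{eq:polygon-wall} plays a decisive quantitative role.

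\emph{Step 1 (local analysis at the punctures).} Every line subbundle of $E$ has the form $L \cong \Ocal(-d)$ with $d\ge 0$, specified by a coprime pair of homogeneous polynomials of degree $d$ on $\CP^1$. At each puncture $p_j$, the residue $A_j = (v_j\otimes w_j)_0 = v_j\otimes w_j$ is nilpotent (since $w_j(v_j)=0$) and, whenever $w_j\neq 0$, satisfies $\ker A_j = \mathrm{Im}(A_j) = \ell_j$. Writing $\Phi = A_j(z-p_j)^{-1}\,dz +\text{(regular)}$ in a local chart, a Taylor-expansion computation shows that the induced morphism $\phi_L\colon L\to (E/L)\otimes K\otimes\Ocal({\bf D})$ has a genuine pole at $p_j$ unless either $w_j=0$ or $L_{p_j}=\ell_j$. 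Setting $S_L := \{j : L_{p_j}=\ell_j\}$, the $\Phi$-invariance of $L$ therefore forces $w_j=0$ for every $j\notin S_L$.

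\emph{Step 2 (ruling out higher-degree subbundles).} The parabolic degree of such an $L$ is
\[
\mathrm{pardeg}(L) \;=\; -d \;+\; \epsilon_{S_L}(\alpha), \qquad
\epsilon_S(\alpha) \;:=\; \sum_{j\in S}\alpha_j\;-\;\sum_{j\notin S}\alpha_j.
\]
The smallness hypothesis gives the elementary bound $\epsilon_S(\alpha)<1$ for every subset $S$. Hence for every $\Phi$-invariant $L$ of degree $\le -1$,
\[
\mathrm{pardeg}(L) \;\le\; -1 + \epsilon_{S_L}(\alpha) \;<\; 0,
\]
so such line subbundles cannot destabilize $(E,\Phi)$. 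Only degree-$0$ line subbundles remain as potential destabilizers.

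\emph{Step 3 (degree-zero subbundles are constant lines through straight subsets).} A degree-$0$ line subbundle of the trivial bundle $\Ocal^{\oplus 2}$ is necessarily a constant line $L = \Ocal\cdot v_0$ for some $v_0\in\C^2\setminus\{0\}$, with $L_{p_j} = \mathrm{Span}(v_0)$ independent of $j$. Thus $S_L = \{j : v_j\in\mathrm{Span}(v_0)\}$ is straight in the hyperpolygon sense, and Step 1 shows that $\Phi$-invariance is equivalent to the hyperpolygon-type compatibility: the $v_j$, $j\in S_L$, all lie in the common line $\mathrm{Span}(v_0)$, and $w_j=0$ for $j\notin S_L$. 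Conversely, any straight subset $S$ with $w_j=0$ outside $S$ produces such a $\Phi$-invariant constant line (a direct check using $w_j(v_0)=0$ for $j\in S$ shows $\Phi v_0 \equiv 0$). For such $L$, $\mathrm{pardeg}(L)=\epsilon_S(\alpha)$, which is $\ge 0$ precisely when $S$ is $\alpha$-long.

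\emph{Step 4 (conclusion).} Combining Steps 2 and 3, the parabolic Higgs bundle $(E,\Phi)$ fails stability if and only if there is a straight subset $S$ with $w_j=0$ for $j\notin S$ and $\epsilon_S(\alpha)\ge 0$, which is exactly the negation of the second clause of hyperpolygon $\alpha$-stability. The first clause $v_j\neq 0$ for all $j$ is equivalent to the existence of the parabolic structure: if some $v_{j_0}=0$ then $A_{j_0}=0$, the line $\ell_{j_0}$ is undefined, and any choice of it together with the vanishing residue yields extra $\Phi$-invariant constant lines that straightforwardly destabilize. This gives the claimed equivalence. The main technical obstacle is the local computation of Step 1, whose delicate point is that the twist by $\Ocal({\bf D})$ is exactly absorbed by the strongly parabolic condition $A_j\ell_j=0$; once this is in place, the rest of the argument is combinatorial and hinges on the small-weight bound $\epsilon_S(\alpha)<1$ to eliminate higher-degree destabilizers.
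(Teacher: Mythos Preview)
The paper does not supply its own proof of this proposition; it is quoted directly from Godinho--Mandini \cite{GoMa}. Your argument is the natural one and is essentially the approach taken in that reference: classify $\Phi$-invariant line subbundles of $\mathcal O^{\oplus 2}$, use the small-weight bound $\epsilon_S(\alpha)<1$ to discard subbundles of degree $\le -1$, and identify the remaining degree-$0$ (hence constant) invariant lines with straight subsets carrying vanishing $w_j$ outside. Steps~1--3 are correct as written; the key local observation that $A_j=v_j\otimes w_j$ preserves a line $L_{p_j}$ only if $w_j=0$ or $L_{p_j}=\ell_j$ is exactly what is needed.

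One small point: your treatment of the clause $v_j\neq 0$ in Step~4 is not quite right. The claim that ``any choice of $\ell_{j_0}$ yields extra $\Phi$-invariant constant lines that straightforwardly destabilize'' is false in general---for instance, if the remaining $v_j$ are in general position with all $w_j\neq 0$, there may be no $\Phi$-invariant constant line at all. The correct reading is simply that the Godinho--Mandini map $(v,w)\mapsto(E,\Phi,\ell_j)$ is only defined on the locus where every $v_j\neq 0$, so the equivalence in the proposition is asserted only there; the first clause of hyperpolygon stability is a well-definedness condition for the correspondence rather than a stability condition to be matched. With that adjustment your argument is complete.
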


Thus, for generic small weights $\alpha$, the hyperpolygon space
$\mathcal X_\alpha^s$ is biholomorphic to a Zariski open subset of the moduli space $\Mcal_{Higgs}(\alpha)$ of  parabolic 
Higgs bundles on $\CP^1$. This biholomorphism preserves the respective 
holomorphic symplectic forms \cite{BiFloGoMa}.
In summary, we have the following result.

\begin{theorem}\label{thm:GM-hyperpolygon-higgs}
{\cite{BiFloGoMa,GoMa}}
Let $\alpha=(\alpha_1,\dots,\alpha_n)\in (0,\frac{1}{2})^n$ be generic and small, and let
$\mathcal X_\alpha$ denote the associated hyperpolygon space.
Let $\mathcal M_{\mathrm{Higgs}}(\alpha)$ be the moduli space of stable 
\emph{strongly parabolic} $\mathfrak{sl}(2,\C)$-Higgs bundles on $(\CP^1,{\bf D})$, and let
$\mathcal H_{\mathrm{Higgs}}(\alpha)\subset\mathcal M_{\mathrm{Higgs}}(\alpha)$ denote
the Zariski-open subset where the underlying holomorphic bundle is trivial.
There exists a natural biholomorphic symplectomorphism
\[
\Psi\colon (\mathcal X_\alpha,I)\xrightarrow{\;\cong\;}
\mathcal H_{\mathrm{Higgs}}(\alpha).
\]
If
$\Omega_{\mathcal X_\alpha}$ denotes the holomorphic symplectic form on
$(\mathcal X_\alpha,I)$, and
$\Omega_{\mathrm{Higgs}}$ denotes the canonical holomorphic symplectic form on
$\mathcal M_{\mathrm{Higgs}}(\alpha)$, then
\[
\Psi^*\Omega_{\mathrm{Higgs}}=\Omega_{\mathcal X_\alpha}.
\]
\end{theorem}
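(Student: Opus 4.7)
The plan is to construct $\Psi$ explicitly on representatives, check $\mathrm{G}^\C$-invariance and bijectivity onto the trivial-bundle locus, and only then tackle the matching of holomorphic symplectic forms. First, to each $(v,w)\in\hat{\mathcal X}_\alpha^{\mathrm s}$ I associate the parabolic Higgs bundle on the trivial rank-$2$ bundle $E=\CP^1\times\C^2$ with quasi-parabolic lines $\ell_j=\mathrm{Span}(v_j)$ and Higgs field $\Phi$ whose only singularities are simple poles at $p_j$ with traceless residues $(v_j\otimes w_j)_0$. The equation $w_j(v_j)=0$ makes each residue nilpotent with kernel $\ell_j$ (strong parabolicity), and $\sum_j(v_j\otimes w_j)_0=0$ combined with the residue theorem on $\CP^1$ ensures regularity at $\infty$. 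The $\SL(2,\C)$-factor of $\mathrm{G}^\C$ acts by constant gauge transformations of $E$, while each $\C^*$-factor rescales $(v_j,w_j)\mapsto(\zeta_j v_j,\zeta_j^{-1}w_j)$, preserving both $\ell_j$ and the rank-one tensor $v_j\otimes w_j$; hence $\Psi$ descends to $\hat{\mathcal X}_\alpha^{\mathrm s}/\mathrm{G}^\C$, and Proposition \ref{pro:Goma} identifies the stable loci on the two sides for small generic $\alpha$.

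For bijectivity onto $\mathcal H_{\mathrm{Higgs}}(\alpha)$ I would invert the construction. Given a stable pair with $E$ trivial, fix a holomorphic trivialization (unique up to a constant $\SL(2,\C)$-change of frame), read off a nonzero vector $v_j\in\C^2$ spanning each $\ell_j$, and note that the nilpotent residue with kernel $\ell_j$ has the form $v_j\otimes w_j$ for a unique $w_j\in V^*$ obeying $w_j(v_j)=0$. The residue theorem forces $\sum_j(v_j\otimes w_j)_0=0$, so $(v,w)\in\hat{\mathcal X}$, and the $(\C^*)^n$-ambiguity in rescaling each $v_j$ produces the remaining factor of $\mathrm{G}^\C$. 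This yields the inverse $\Psi^{-1}$ and establishes that $\Psi$ is biholomorphic.

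The technical heart of the statement is the symplectic matching. On the hyperpolygon side, Theorem \ref{thm:twistor-hyperpolygon-orbits}, specialized to the $\lambda^{-1}$-coefficient of $\Omega_\lambda$, expresses $\Omega_{\mathcal X_\alpha}$ as the holomorphic symplectic reduction of the flat form $\Omega_I=\sum_j dv_j\wedge dw_j$ on $M_n$ by $\mathrm{G}^\C$; equivalently, on each semisimple adjoint orbit of a residue it descends to the Kirillov--Kostant--Souriau form. On the Higgs side, the canonical holomorphic symplectic form on $\Mcal_{\mathrm{Higgs}}(\alpha)$, restricted to $\mathcal H_{\mathrm{Higgs}}(\alpha)$, admits a Marsden--Weinstein presentation as the reduction of the Liouville form on the affine space of strongly parabolic Higgs fields on the trivial bundle by constant gauge transformations. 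After applying the residue theorem and using the small-weight hypothesis to ensure that the trivialization is unobstructed, this reduces to the $\lambda\to 0$ analogue of formula \eqref{eq:Goldman-Kirilov}, again a sum of KKS forms on the residue orbits. The pointwise matching then follows from comparing the induced pairings on tangent variations $(\delta v_j,\delta w_j)$.

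The main obstacle lies precisely in this last comparison: the identification on the level of data is essentially tautological, but certifying that an a priori infinite-dimensional Liouville-type reduction produces the same two-form as a finite-dimensional KKS reduction requires one to exhibit both as the same symplectic quotient. The key enabling observation, which holds exactly in the small-weight regime guaranteed by \eqref{eq:polygon-wall}, is that the trivial-bundle locus $\mathcal H_{\mathrm{Higgs}}(\alpha)$ admits a finite-dimensional GIT presentation agreeing with the hyperpolygon one, so both sides can be realized as the same symplectic quotient of $(T^*\C^2)^n$ by $\SL(2,\C)\times(\C^*)^n$; this is the content of \cite{BiFloGoMa}, whose argument can be followed verbatim to conclude.
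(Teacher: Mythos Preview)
The paper does not supply its own proof of this theorem: it is stated as a cited result from \cite{GoMa,BiFloGoMa}, with the construction of the map $\Psi$ recalled in the paragraphs preceding Proposition~\ref{pro:Goma} and the symplectic compatibility attributed to \cite{BiFloGoMa}. So there is no ``paper's own proof'' to compare against beyond the explicit description of $\Psi$ on representatives, which your first paragraph reproduces accurately.

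As an independent sketch, your argument is sound in outline and follows the route of the cited references. The construction and $\mathrm{G}^\C$-invariance of $\Psi$ are handled correctly, and the inverse is described appropriately (the phrase ``unique $w_j$'' should be read as unique \emph{given the choice of} $v_j$, which you then absorb into the $(\C^*)^n$-ambiguity). For the symplectic matching you correctly isolate the issue---showing that the canonical form on the trivial-bundle locus agrees with the KKS reduction on residues---and then defer to \cite{BiFloGoMa}. That is honest, but it means your proposal is ultimately a paraphrase of the literature rather than a self-contained argument; the genuine work (identifying the two reductions, handling the nilpotent orbits where the orbit form degenerates, and verifying compatibility with the GIT quotient) is precisely what \cite{BiFloGoMa} carries out and what you do not reproduce. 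Since the paper itself treats the result as a black box, this is not a gap relative to the paper, only relative to a fully self-contained proof.
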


\begin{remark}\label{rem:small-weight}
Since parabolic semi-stability walls \eqref{eq:par-wall}, which intersect  the small-weight region \eqref{eq:polygon-wall} nontrivially,
are necessarily hyperpolygon semi-stability walls, the corresponding parabolic and hyperpolygon genericity conditions coincide in the 
small-weight region and can be unambiguously exchanged. The small-weight condition \eqref{eq:polygon-wall} is necessary and sufficient
for the holomorphic cotangent bundle of the bottom component of the nilpotent cone (see \cite[section 4.2]{BY96} for its construction)
to be entirely contained in the Zariski open subset $\mathcal H_{\mathrm{Higgs}}(\alpha)$ The case $n=4$ is described in detail in \cite{MenesesSIGMA}.
In the case of parabolic weights for which stable parabolic bundles exist, this can also be inferred from \cite[remark 5.4]{AW98}.
This is why we exclusively consider small parabolic weight vectors within the parabolic weight polytope as initial degeneration data 
for scaling transformations towards the vertex $(0,\dots,0)$.
\end{remark}

Although $\mathcal X_\alpha$ and $\mathcal M_{\mathrm{Higgs}}(\alpha)$ are birational 
as complex symplectic manifolds, their hyperk\"ahler metrics are not isometric, since  for arbitrary $\alpha$ both metrics are complete but we have that
\[
\mathcal M_{\mathrm{Higgs}}(\alpha)\backslash \mathcal H_{\mathrm{Higgs}}(\alpha)\neq\emptyset. 
\]
Moreover,  
when $n=4$, the Higgs bundle moduli space carries an ALG metric \cite{FMSW}, whereas
the corresponding hyperpolygon space is known to be ALE.
In general, the hyperk\"ahler metric on $\mathcal X_\alpha$ is defined without any reference 
to the choice of the divisor $\bf{D}$, while the hyperk\"ahler metric on
the Higgs bundle moduli space is  known to depend on the conformal structure of the $n$-punctured sphere.

\section{Construction of twistor lines}\label{sec:construction}
The goal of this section is to construct 
real holomorphic sections of the parabolic Deligne--Hitchin moduli space
$\Mcal_{\mathrm{DH}}^{\mathrm{par}}(t\alpha)$ which converge,
as $t\to 0$, to the hyperpolygon twistor lines arising from the hyperk\"ahler reduction picture.
In particular,  we show that every twistor line of the hyperpolygon space occurs as the
\emph{limiting direction} of a $t$-family of twistor lines for the moduli space 
$\Mcal_{Higgs}(t\alpha)$.

The proof proceeds by formulating a loop--group valued monodromy problem for a family of
$\lambda$--connections and solving it by an implicit function theorem argument.
The resulting real holomorphic sections are then shown to be twistor lines by a global
Iwasawa factorization and a tameness (boundedness) analysis near the punctures.

\subsection{Setup and initial conditions}\label{sec:setupIFT}
We set up necessary loop group background for constructing twistor lines
of  Deligne-Hitchin moduli spaces.

\subsubsection*{Loop algebras and loop groups}
We first fix notations.
Define the loop algebra
\[
\Lambda\mathfrak{sl}(2,\C)
:=
\{\,\xi\colon S^1\to\mathfrak{sl}(2,\C) \mid \xi \text{ is real analytic}\,\}.
\]
Writing $\xi(\lambda)=\sum_{k\in\mathbb Z}\xi_k\lambda^k$, we equip
$\Lambda\mathfrak{sl}(2,\C)$ with the $\ell^1$--norm
\[
\|\xi\|
:=
\sum_{k\in\mathbb Z}|\xi_k|,
\]
where $|\cdot|$ denotes a fixed matrix norm on $\mathfrak{sl}(2,\C)$.
The loop algebra $\Lambda\mathfrak{sl}(2,\C)$ is the Lie algebra of the loop group
\[
\Lambda\mathrm{SL}(2,\C)
:=
\{\, g\colon S^1\to \mathrm{SL}(2,\C)\mid g \text{ is real analytic}\,\}.
\]
We define an anti--holomorphic involution on $\Lambda\mathrm{SL}(2,\C)$ by
\[
g^*(\lambda)
:=
\left(\overline{g(-\bar\lambda^{-1})}^{\,T}\right)^{-1}.
\]
The corresponding unitary loop group is
\[
\Lambda\mathcal U
:=
\{\, g\in \Lambda\mathrm{SL}(2,\C)\mid g^*=g\,\}.
\]
It is a real Banach--Lie group.
We also consider the induced  anti--linear involution
on the loop algebra $\Lambda\mathfrak{sl}(2,\C)$, given by
\[
\xi^*(\lambda)
=
-\overline{\xi(-\bar\lambda^{-1})}^{\,T}.
\]

Clearly, $^*$ is continuous and satisfies
\begin{equation}\label{eq:real_invol} [\xi,\mu]^*=[\xi^*,\mu^*]\end{equation}
and
\[(\sum_k \xi_k\lambda^k)^*=-\sum_k \overline{\xi_{k}}^T \lambda^{-k}.\]

We define the real subspaces
\[
\Lambda\mathfrak U
:=
\{\,\xi\in \Lambda\mathfrak{sl}(2,\C)\mid \xi^*=\xi\,\},
\qquad
\Lambda\mathfrak H
:=
\{\,\xi\in \Lambda\mathfrak{sl}(2,\C)\mid \xi^*=-\xi\,\}.
\]
Then $\Lambda\mathfrak U$ is the Lie algebra of $\Lambda\mathcal U$, and
\[
\Lambda\mathfrak H = i\,\Lambda\mathfrak U.
\]

For $Y\in \Lambda\mathfrak{sl}(2,\C)$ we write
\begin{equation}\label{defuhdeco}
Y
=
Y^{u}+Y^{h}
:=
\tfrac{1}{2}(Y+Y^*)+\tfrac{1}{2}(Y-Y^*)
\in
\Lambda\mathfrak U\oplus\Lambda\mathfrak H,
\end{equation}
and refer to $Y^{u}$ and $Y^{h}$ as the \emph{unitary} and
\emph{Hermitian symmetric} parts of $Y$, respectively.
\begin{example}
Consider $\mathfrak A$ as in \eqref{def:mfA}. Then
\begin{equation}\label{eq:defX}\mathfrak X:=i\mathfrak A\in\Lambda\mathfrak U.\end{equation}
\end{example}

We consider the closed subspaces
\[
\Lambda^0\mathfrak{sl}(2,\C)=\mathfrak{sl}(2,\C),\qquad
\Lambda^+\mathfrak{sl}(2,\C),\qquad
\Lambda^-\mathfrak{sl}(2,\C)
\]
of constant loops, of loops extending holomorphically to the unit disc and vanishing
at $\lambda=0$, and of loops extending holomorphically to the complement of the closed
unit disc and vanishing at $\lambda=\infty$, respectively.
All three are Lie subalgebras of $\Lambda\mathfrak{sl}(2,\C)$.

We further set
\[
\Lambda^{\ge 0}\mathfrak{sl}(2,\C)
:=
\mathfrak{sl}(2,\C)\oplus \Lambda^+\mathfrak{sl}(2,\C),
\qquad
\Lambda^{\ge -1}\mathfrak{sl}(2,\C)
:=
\lambda^{-1}\Lambda^{\ge 0}\mathfrak{sl}(2,\C).
\]
Note that $\Lambda^{\ge -1}\mathfrak{sl}(2,\C)$ is not a Lie subalgebra.

In analogy with the decomposition of the loop algebra, we define the positive loop group
\[
\Lambda^{\geq0}\mathrm{SL}(2,\C)
:=
\{\, g\in \Lambda\mathrm{SL}(2,\C)
\mid g \text{ extends holomorphically to } |\lambda|<1 \,\}.
\]
Its Lie algebra is $\Lambda^{\ge 0}\mathfrak{sl}(2,\C)$.

We further set
\[
\Lambda^{\geq0}_{B}\mathrm{SL}(2,\C)
:=
\{\, g\in \Lambda^{\geq0}\mathrm{SL}(2,\C)
\mid g(0)\in \mathcal B \,\},
\]
where $\mathcal B\subset \mathrm{SL}(2,\C)$ denotes the subgroup of lower triangular
matrices with real entries on the diagonal.

The following  result can be deduced from the Birkhoff factorization in \cite{PressleySegal}. 
\begin{theorem}
There exists
an open dense subset (the \emph{big cell}) of $\Lambda\mathrm{SL}(2,\C)$ such that every
element $g$ in this subset admits a unique factorization
\[
g
=
pu,
\qquad
p\in \Lambda^{\geq0}_{B}\mathrm{SL}(2,\C),
\quad
u\in \Lambda\mathcal U.
\]
The factorization depends real analytically on $g$ wherever it exists.
\end{theorem}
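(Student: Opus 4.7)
The plan is to prove this as a real-analytic Iwasawa-type decomposition of $\Lambda\SL(2,\C)$, deduced from the Pressley--Segal Birkhoff factorization via an algebraic splitting at the Lie algebra level combined with the analytic implicit function theorem. The statement is the loop-group analogue of the classical Iwasawa $\SL(2,\C) = \SU(2)\cdot\mathcal B$, twisted by the real structure $*$ and adapted to the Borel $\mathcal B$ of lower triangular matrices with real diagonal.

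The first step is to establish the Banach topological direct sum
\[
\Lambda\mathfrak{sl}(2,\C) \;=\; \bigl(\mathfrak b \oplus \Lambda^+\mathfrak{sl}(2,\C)\bigr) \;\oplus\; \Lambda\mathfrak U,
\]
where $\mathfrak b = \mathrm{Lie}(\mathcal B)$. This is a mode-by-mode calculation: for $\mu = \sum_k \mu_k\lambda^k \in \Lambda\mathfrak U$, the reality condition $\mu^* = \mu$ forces $\mu_k = (-1)^{k+1}\overline{\mu_{-k}}^T$, so in particular $\mu_0 \in \mathfrak{su}(2)$ and the negative modes of $\mu$ are determined by its positive ones. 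Given $\xi = \sum_k \xi_k\lambda^k$, one sets $\mu_k := \xi_k$ for $k<0$, determines $\mu_k$ for $k>0$ from the reality constraint, and splits $\xi_0$ via the finite-dimensional Iwasawa $\mathfrak{sl}(2,\C) = \mathfrak b \oplus \mathfrak{su}(2)$; the complementary $\eta := \xi - \mu$ lies in $\mathfrak b \oplus \Lambda^+\mathfrak{sl}(2,\C)$. All operations are bounded in the $\ell^1$-norm, yielding the Banach direct sum.

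With this splitting, the multiplication map
\[
m\colon \Lambda^{\geq 0}_B\SL(2,\C) \times \Lambda\mathcal U \longrightarrow \Lambda\SL(2,\C), \qquad (p,u)\longmapsto pu,
\]
is real analytic, and its differential at the identity is the addition map, hence a topological isomorphism by the previous step. The analytic inverse function theorem shows that $m$ is a local real-analytic diffeomorphism near $(\mathrm{id},\mathrm{id})$; translation by the Banach--Lie group structure of both factors extends this to a local analytic diffeomorphism near every point in its image. This yields simultaneously the uniqueness of the factorization and its real-analytic dependence on $g$ wherever it exists.

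For the globalization to an open dense big cell, I would invoke the Birkhoff factorization of \cite{PressleySegal}: on its big cell, every $g$ factors uniquely as $g = g_-g_+$ with $g_- \in \Lambda^-\SL(2,\C)$ normalized by $g_-(\infty) = I$ and $g_+ \in \Lambda^{\geq 0}\SL(2,\C)$. Setting $\sigma(g)(\lambda) := \overline{g(-\bar\lambda^{-1})}^T$ so that $g^* = \sigma(g)^{-1}$, the decomposition $g = pu$ with $u \in \Lambda\mathcal U$ is equivalent to a Cholesky-type factorization $g\sigma(g) = p\sigma(p)$ with $p \in \Lambda^{\geq 0}_B\SL(2,\C)$. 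Since $\sigma$ exchanges $\Lambda^+$ and $\Lambda^-$ (as $\lambda\mapsto -\bar\lambda^{-1}$ swaps the unit disc and its exterior), the uniqueness of Birkhoff applied to the $\sigma$-fixed element $h := g\sigma(g)$ determines $p$ up to a finite-dimensional ambiguity on the constant part, which is in turn fixed by the classical Cholesky factorization adapted to $\mathcal B$. Density and real analyticity of the $pu$-factorization are inherited from those of Birkhoff. The main obstacle is the careful matching of normalizations and sign conventions: a direct check shows that $\Lambda^{\geq 0}_B\SL(2,\C)\cap\Lambda\mathcal U$ reduces on constants to $\mathcal B\cap\SU(2)=\{\pm I\}$, so a consistent choice of the sign of the diagonal of $p(0)\in\mathcal B$ on each connected component of the big cell must be made to promote the factorization from $2$-to-$1$ to genuinely unique. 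This is an essentially finite-dimensional issue, resolved by orienting $\mathcal B$ appropriately and compatibly with the corresponding component of Birkhoff data.
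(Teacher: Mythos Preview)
The paper does not give a proof of this theorem at all; it only states that the result ``can be deduced from the Birkhoff factorization in \cite{PressleySegal}''. Your proposal therefore cannot be compared against a paper proof, but it is precisely the standard route that the citation is pointing to, and your local argument (the Banach splitting $\Lambda\mathfrak{sl}(2,\C)=(\mathfrak b\oplus\Lambda^+\mathfrak{sl}(2,\C))\oplus\Lambda\mathfrak U$ followed by the analytic inverse function theorem) is correct and gives existence, uniqueness and real-analytic dependence on an open neighbourhood of the image of $m$.

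Two comments on the globalization. First, the Cholesky-type reduction to Birkhoff is the right idea, but the bookkeeping you flag as ``the main obstacle'' is genuinely delicate: since $\sigma$ is an anti-homomorphism, $\sigma(h_+h_-)=\sigma(h_-)\sigma(h_+)$, so one must choose the side and the normalization of the Birkhoff factorization so that uniqueness forces $\sigma$ to exchange the two factors. This works, but it is not automatic and your sketch stops short of carrying it out. Second, you are right that $\Lambda^{\geq0}_B\mathrm{SL}(2,\C)\cap\Lambda\mathcal U=\{\pm I\}$ under the paper's definition of $\mathcal B$ (lower triangular with \emph{real} diagonal), so the factorization is a priori only unique up to sign; the paper's uniqueness claim tacitly relies on a positivity choice for the diagonal of $p(0)$, consistent with the explicit square roots in Example~\ref{example:factor}. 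Your proposed fix (orient $\mathcal B$ by the sign of the diagonal) is the standard one.
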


\begin{example}\label{example:factor}\cite{Heller2025}
Let $\alpha>0$, $r\geq0$, and let $\sqrt{1+4r^4}>0$ denote the positive branch of the square root.
Set
\[
A
:=
\alpha
\begin{pmatrix}
 \sqrt{1+4 r^4}  & -\frac{2 r^2}{\lambda } \\
 2 \lambda r^2 & -\sqrt{1+4 r^4}
\end{pmatrix}
\in \Lambda\mathfrak H .
\]
Consider the map
\[
\Psi\colon \R^{>0}\to \Lambda\mathrm{SL}(2,\C),
\qquad
x\longmapsto \exp(-A\log x).
\]
For $x\in(0,1)$ define
\[
B(x)
=
\begin{pmatrix}
 \frac{\sqrt{-\sqrt{1+4 r^4}(x^{4\alpha}-1)+x^{4\alpha}+1}}{\sqrt{2}\,x^{\alpha}} & 0 \\[0.6em]
 -\frac{\sqrt{2}\,r^2(x^{4\alpha}-1)}{x^{\alpha}\sqrt{-\sqrt{1+4 r^4}(x^{4\alpha}-1)+x^{4\alpha}+1}}\,\lambda
 &
 \frac{\sqrt{2}\,x^{\alpha}}{\sqrt{-\sqrt{1+4 r^4}(x^{4\alpha}-1)+x^{4\alpha}+1}}
\end{pmatrix}
\in \Lambda^{\geq0}_{B}\mathrm{SL}(2,\C),
\]
and
\[
F(x)
=
\begin{pmatrix}
 \frac{-\sqrt{1+4 r^4}(x^{2\alpha}-1)+x^{2\alpha}+1}
 {\sqrt{2}\sqrt{-\sqrt{1+4 r^4}(x^{4\alpha}-1)+x^{4\alpha}+1}}
 &
 \frac{\sqrt{2}r^2(x^{2\alpha}-1)}
 {\sqrt{-\sqrt{1+4 r^4}(x^{4\alpha}-1)+x^{4\alpha}+1}}\,\lambda^{-1}
 \\[0.6em]
 \frac{\sqrt{2}r^2(x^{2\alpha}-1)}
 {\sqrt{-\sqrt{1+4 r^4}(x^{4\alpha}-1)+x^{4\alpha}+1}}\,\lambda
 &
 \frac{-\sqrt{1+4 r^4}(x^{2\alpha}-1)+x^{2\alpha}+1}
 {\sqrt{2}\sqrt{-\sqrt{1+4 r^4}(x^{4\alpha}-1)+x^{4\alpha}+1}}
\end{pmatrix}
\in \Lambda\mathcal U,
\]
and observe that
\begin{equation}\label{eq:expIwa}
\Psi(x)=B(x)\,F(x).
\end{equation}
The factorization fails only at the real positive zero 
\[
x_r
=
\left(\frac{\sqrt{4r^4+1}+1}{\sqrt{4r^4+1}-1}\right)^{\tfrac{1}{4\alpha}}
>
1\,
\]
of the denominator of $F$ (or equivalently $B$), ensuring existence for all $x\in(0,1).$
Note that \eqref{eq:expIwa} extends  to $r=0$, for the constant unitary part $F_{r=0}=\mathrm{Id}.$

If instead one considers
\[
\tilde A
:=
\alpha
\begin{pmatrix}
 -\sqrt{1+4 r^4}  & -\frac{2 r^2}{\lambda } \\
 2 \lambda r^2 & \sqrt{1+4 r^4}
\end{pmatrix}
\in \Lambda\mathfrak H,
\qquad
\tilde\Psi(x)=\exp(-\tilde A\log x),
\]
then the corresponding Iwasawa factorization fails at
$\tilde x_r=1/x_r\in(0,1)$.
In fact, $\tilde B$ and $\tilde F$ are obtained from $B$ and $F$ by replacing
$\sqrt{1+4r^4}$ with $-\sqrt{1+4r^4}$ throughout.
\end{example}

\subsubsection*{The monodromy problem}

Consider the $n$--punctured sphere
\[
\CP^1\setminus\{p_1,\dots,p_n\}.
\]
Without loss of generality, we assume that $p_j\in\C\subset\CP^1$ for all $j$.
Fix simple closed loops $\gamma_j$ in $\C\setminus\{p_1,\dots,p_n\}$,
based at a point $q$, winding once positively around $p_j$, such that
\[
\gamma_n*\dots*\gamma_1=1
\quad\text{in}\quad
\pi_1(\CP^1\setminus\{p_1,\dots,p_n\},q).
\]

We consider real analytic families of $\Lambda\mathfrak{sl}(2,\C)$--connections
\[
\nabla_t = d+\xi(t),
\qquad t\in[0,\varepsilon),
\]
with $\xi(0)=0$ and
\[
\xi(t)\in H^0\!\left(\CP^1,
K(p_1+\dots+p_n)\otimes \Lambda^{\geq-1}\mathfrak{sl}(2,\C)\right).
\]

Let
\[
\rho_t\colon \pi_1(\CP^1\setminus\{p_1,\dots,p_n\},q)\to \Lambda\mathrm{SL}(2,\C)
\]
be its monodromy representation, and set $M_j(t):=\rho_t(\gamma_j)$.
We define the logarithmic derivatives of the monodromy by
\[
B_j(t)
:=
M_j(t)^{-1}M_j'(t)
\in \Lambda\mathfrak{sl}(2,\C),
\qquad j=1,\dots,n.
\]

Fix a small generic weight vector $\alpha=(\alpha_1,\dots,\alpha_n)$, and a stable strongly parabolic Higgs bundle with 
Higgs field $\Phi\neq0$ whose underlying holomorphic bundle is trivial.
In particular, the quasi-parabolic lines $\ell_1,\dots,\ell_n$ are fixed.
Our goal is to construct a family $\xi(t)$ as above such that the following
conditions are satisfied:
\begin{itemize}
\item[(i)]
$B_j(t)\in\Lambda\mathfrak U$ for all $j=1,\dots,n$;
\item[(ii)]
$\xi_{-1}(t)= \Res_{\lambda=0}\xi(t)=t\Phi$,   with fixed quasi-parabolic lines $\ell_j$;
\item[(iii)]
$\det\!\left(\tfrac{1}{t}\Res_{p_j}\xi(t)\right)=-\alpha_j^2$
for all $j=1,\dots,n$ and all $\lambda\in\C^*$.
\end{itemize}
We refer to this system of conditions as the \emph{monodromy problem}.
\begin{remark}\label{rem:quasipara}
Using $\xi=\sum_{k\geq-1}\xi^{(k)}\lambda^k$
together with (iii), the eigenline of $\Res_{p_j}\xi(t)$ with respect to the eigenvalue $t\alpha$
extends to $\lambda=0.$ We call this the induced quasiparabolic line of $d+\xi(t)$ at $\lambda=0$.

The quasi-parabolic line $\ell_j$ at $p_j$ induced by $d+\xi(t)$ at $\lambda=0$ is given by
\[
\ell_j = \ker\bigl(\operatorname{Res}_{p_j}\Phi\bigr),
\]
whenever $\operatorname{Res}_{p_j}\Phi \neq 0$.
In this situation, the quasi-parabolic line $\ell_j$ is independent of $t$, since
$
\xi_{-1}(t) = t\Phi .
$
\end{remark}

\subsection{Solving the monodromy problem}

We first address condition~(ii) in the monodromy problem. Let $(v,w)\in\mathcal X^\alpha$ correspond to the strongly
parabolic Higgs field $\Phi$ via Proposition~\ref{pro:Goma}.
For $j=1,\dots,n$ let $\mathfrak A_j=\mathfrak A(v_j,w_j)$ be as in \eqref{eq:mafraA}.
By Proposition~\ref{prop:HK-KN} and stability of $\Phi$, we may assume (after
$\mathrm{SL}(2,\C)$--conjugation) that
all moment map conditions for the hyperpolygon are solved, i.e.
\[
\sum_{j=1}^n \mathfrak A_j = 0,
\]
and define
\begin{equation}\label{eq:xicentral}
\underline\xi
:=
\sum_{j=1}^n \mathfrak A_j\,\frac{dz}{z-p_j}.
\end{equation}
Consider the ansatz
\begin{equation}\label{eq:defxit}
\xi(t)
=
t\bigl(\underline\xi+\hat\xi(t)\bigr),
\end{equation}
where $\hat\xi(t)$ is $\Lambda^{\ge0}\mathfrak{sl}(2,\C)$--valued and satisfies
$\hat\xi(0)=0$.
 In particular, this means that the Higgs field of $\xi(t)$ 
(determined by the residue at $\lambda=0$) 
is  given by $\underline\xi$.

Write
\begin{equation}\label{eq:defhatxi}
\hat\xi(t)
=
\sum_{j=1}^n P_j(t)\,\frac{dz}{z-p_j},
\qquad
P_j(t)\in \Lambda^{\ge0}\mathfrak{sl}(2,\C).
\end{equation}
For technical reasons, we initially allow $\hat\xi(t)$ to have a simple pole at
$z=\infty\in\CP^1$, i.e. we do not impose the condition $\sum_j P_j(t)=0$.
Although this might introduce a pole at infinity, the monodromy along the loops $\gamma_j$ (which avoid $\infty$)
is still well defined and
can be computed.

We next observe that the monodromy problem is solved at $t=0.$
\begin{lemma}\label{lem:IFTt0}
Let $\xi(t)$ be given by \eqref{eq:defxit}, where $\underline\xi$ is defined in
\eqref{eq:xicentral} and $\hat\xi(0)=0$.
Then conditions \emph{(i)}, \emph{(ii)}, and \emph{(iii)} of the monodromy problem
are satisfied at $t=0$.
\end{lemma}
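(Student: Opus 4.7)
The strategy is to exploit that at $t=0$ the ansatz \eqref{eq:defxit} degenerates to $\xi(0)=0$, so $\nabla_0=d$ is the trivial connection and $M_j(0)=\mathrm{Id}$ for every $j$. Conditions (ii) and (iii) then become purely algebraic statements about $\underline\xi$, whereas (i) requires a first-order variational computation in $t$. My plan is to dispose of (ii) and (iii) first and devote the main effort to (i).

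For condition (ii), observe that $\xi(0)=0$ makes $\mathrm{Res}_{\lambda=0}\xi(0)=0=0\cdot\Phi$ trivially. The compatibility with the quasi-parabolic lines follows from $\hat\xi(t)\in\Lambda^{\ge 0}\mathfrak{sl}(2,\C)$, which forces $\mathrm{Res}_{\lambda=0}\xi(t)=t\,\mathrm{Res}_{\lambda=0}\underline\xi$; the residue of the right-hand side at $p_j$ is $\nu_\C(v_j,w_j)$, coinciding up to sign with $\mathrm{Res}_{p_j}\Phi$ via the Godinho--Mandini correspondence of Proposition~\ref{pro:Goma}, and its kernel is $\ell_j=\mathrm{Span}(v_j)$ as noted in Remark~\ref{rem:quasipara}. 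For condition (iii), the identity
\[
\tfrac{1}{t}\mathrm{Res}_{p_j}\xi(t)=\mathfrak A_j+\mathrm{Res}_{p_j}\hat\xi(t)
\]
evaluated at $t=0$ yields $\mathfrak A_j$, and the statement $\det\mathfrak A_j=-\alpha_j^2$ for every $\lambda\in\C^*$ is exactly the first equation in \eqref{eq:SDforRes}.

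The main content lies in condition (i). Since $M_j(0)=\mathrm{Id}$, one has $B_j(0)=M_j'(0)$, and the standard first-order perturbation formula for the monodromies of a flat family $d+\xi(t)$ about the trivial connection gives
\[
B_j(0)=\oint_{\gamma_j}\left.\tfrac{\partial}{\partial t}\xi(t)\right|_{t=0}=\oint_{\gamma_j}\underline\xi=2\pi i\,\mathfrak A_j,
\]
by the residue theorem applied to \eqref{eq:xicentral} together with the vanishing $\hat\xi(0)=0$. The reality condition $\mathfrak A_j^*=-\mathfrak A_j$ in \eqref{eq:SDforRes} means precisely that $i\,\mathfrak A_j\in\Lambda\mathfrak U$ (cf.\ \eqref{eq:defX}), hence $B_j(0)\in\Lambda\mathfrak U$, as required. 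The one delicate point I foresee is fixing the sign and the $2\pi i$ factor in the variational formula, which depends on orientation conventions for $\gamma_j$ and on the convention used to define logarithmic derivatives of monodromy. I would settle this via the Dyson series expansion of parallel transport, noting that at $t=0$ all higher-order path-ordered contributions vanish, so only the linear term survives. Once the conventions are pinned down, the whole lemma reduces to the two algebraic identities $\mathfrak A_j^*=-\mathfrak A_j$ and $\det\mathfrak A_j=-\alpha_j^2$ recorded in \eqref{eq:SDforRes}.
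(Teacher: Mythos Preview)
Your argument is correct and follows essentially the same route as the paper: conditions (ii) and (iii) reduce to the algebraic identities in \eqref{eq:SDforRes}, and condition (i) is obtained from the first-order expansion of parallel transport around the trivial connection together with the residue theorem. The paper's computation yields $B_j(0)=-2\pi i\,\mathfrak A_j$ (using the convention $d\Psi_t+\xi(t)\Psi_t=0$), so your sign is off, but as you correctly anticipate this is immaterial for membership in the real subspace $\Lambda\mathfrak U$.
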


\begin{proof}
Condition~\emph{(ii)} holds by construction of $\underline\xi$, and
condition~\emph{(iii)} follows
from \eqref{eq:mafraA}
(by analytic continuation to $t=0$).
To verify~\emph{(i)}, let $\Psi_t$ be the solution of
\[
d\Psi_t+\xi(t)\Psi_t=0,
\qquad
\Psi_t(q)=\mathrm{Id}.
\]
Then, for small $t$ we expand using the assumption $\hat\xi(0)=0$
\[
\Psi_t(z)
=
\mathrm{Id}
-
t\int_q^z \underline\xi
+
O(t^2).
\]
By the residue theorem  we obtain
\[
B_j(0)=-2\pi i\,\mathfrak A_j\in\Lambda\mathfrak U,
\]
which proves the claim.
\end{proof}

We now study the monodromy problem for $t\neq 0$ sufficiently small.
The strategy is to view conditions~\emph{(i)} and~\emph{(iii)} as a system of equations
for the unknown correction term $\hat\xi(t)$ and to solve this system by an implicit
function theorem argument between the closed subspaces
$\Lambda\mathfrak H\cap \mathcal K_{\mathfrak X}$ and $\Lambda^{\geq0}\mathfrak{sl}(2,\C)$
 of  the Banach space
$\Lambda\mathfrak{sl}(2,\C)$,  (see Appendix~\ref{appendix}).

Condition~\emph{(i)} of the monodromy problem requires that
\begin{equation}\label{eq:difres}
B_j(t):=M_j(t)^{-1}M_j'(t)\in\Lambda\mathfrak U,
\quad j=1,\dots,n.
\end{equation}
Equivalently, this condition can be written as
\begin{equation}\label{eq:unitary-condition}
\bigl(B_j(t)\bigr)^* = B_j(t),
\quad j=1,\dots,n.
\end{equation}

Similarly, condition~\emph{(iii)} prescribes the determinant of the residues of $\xi(t)$.
Using \eqref{eq:defxit} and \eqref{eq:defhatxi}, we compute
\[
\frac{1}{t}\Res_{p_j}\xi(t)
=
\mathfrak A_j + P_j(t),
\]
where $\mathfrak A_j$ are the residues of $\underline\xi$.
Thus condition~\emph{(iii)} is equivalent to
\begin{equation}\label{eq:det-condition}
\det\bigl(\mathfrak A_j + P_j(t)\bigr) = -\alpha_j^2,
\quad j=1,\dots,n,
\end{equation}
for all $t$.

Equations \eqref{eq:unitary-condition} and \eqref{eq:det-condition} form a coupled
system of nonlinear equations for the unknowns $P_j(t)$.
By Lemma~\ref{lem:IFTt0}, this system is satisfied at $t=0$.
We now linearize it at $t=0$.

Let $\delta P_j\in \Lambda^{\geq 0}\mathfrak{sl}(2,\C)$ denote variations of $P_j$ at $t=0$.
Differentiating \eqref{eq:difres} at $t=0$ and using the computation of Lemma \ref{lem:IFTt0} yields
\[
\delta B_j = -2\pi i\,\delta P_j .
\]
Similarly, differentiating \eqref{eq:det-condition} gives
\begin{equation}\label{eq:linear-det}
\tr\bigl(\mathfrak A_j\,\delta P_j\bigr)=0,
\quad j=1,\dots,n.
\end{equation}
Consider the closed subspaces  
\begin{equation*}\begin{split}
\mathcal K_{j}:&=\ker\left(\,\delta P_j\in \Lambda\mathfrak{sl}(2,\C)\mapsto 
\tr(\mathfrak A_j \delta P_j)\in\Lambda \C \,\right)\\
\mathcal K^{\geq0}_{j}:&=\ker\left(\,\delta P_j\in \Lambda^{\geq 0}\mathfrak{sl}(2,\C)\mapsto 
\tr(\mathfrak A_j \delta P_j)\in\Lambda^{\geq-1}\C \,\right)\end{split}\end{equation*}
of $\Lambda^{\geq 0}\mathfrak{sl}(2,\C)$, and
the real projection
\[\pi^{\mathfrak{H}}\colon  \Lambda\mathfrak{sl}(2,\C)\to \Lambda\mathfrak H.\]
From \eqref{def:mfA} and Subsection \ref{sec:hptwistor} we know that the eigenline of $\mathfrak A_j$ with respect
to the eigenvalue $\alpha_j>0$ extends to $\lambda=0,$ and coincides with $\ell_j.$ We define
\[\mathring {\mathcal K}^{\geq0}_{j}:=\{\,\delta P_j\in\mathcal K^{\geq0}_{j}\mid \delta P_j^{(0)}(\ell_j)\subset\ell_j\}\]
as those elements in $\mathcal K^{\geq0}_{j}$ which preserve $\ell_j.$ Note that for
$\Res_{p_j}\Phi\neq0$ we have
\[\mathring {\mathcal K}^{\geq0}_{j}={\mathcal K}^{\geq0}_{j}\]
by Remark  \ref{rem:quasipara}.

The key point is that the continuous linear map
\begin{equation}\label{eq:linIFT}
(\delta P_1,\dots,\delta P_n)\in (\mathring {\mathcal K}^{\geq0}_{1},\dots,\mathring {\mathcal K}^{\geq0}_{n})
\longmapsto
\bigl(\pi^{\mathfrak{H}}\delta B_1,\dots,\pi^{\mathfrak{H}}\delta B_n\bigr)\in \Lambda\mathfrak H^n
\cap(\mathcal K_1,\dots,\mathcal K_n)\end{equation}
is bijective (as $\beta_j\neq0$).
This follows from Proposition~\ref{prop:solutionatpj} in the Appendix~\ref{appendix},
applied pointwise to each residue (after appropriate $\mathrm{SU}(2)$--conjugation).


As a consequence, the linearization of the full monodromy problem at $t=0$ is an
isomorphism between the relevant Banach spaces.
The implicit function theorem therefore applies and yields the following result.
\begin{lemma}\label{lem:monodromy-ift}
Let $\underline\xi$ be given by \eqref{eq:xicentral}.
There exists $\epsilon>0$ such that for all $t\in(-\epsilon,\epsilon)$ there is a
\emph{unique} real analytic family
\[
\hat\xi(t)=\sum_{j=1}^n P_j(t)\,\frac{dz}{z-p_j},
\qquad
P_j(t)\in \Lambda^{\geq0}\mathfrak{sl}(2,\C),
\]
with $\hat\xi(0)=0$, satisfying conditions~\emph{(i)},~\emph{(ii)}, and~\emph{(iii)}
of the monodromy problem.
The maps $t\mapsto P_j(t)$ depend real analytically on $t$ and on the
initial data $\underline\xi$.
\end{lemma}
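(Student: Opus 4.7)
The argument applies the real-analytic implicit function theorem on Banach spaces. Condition (ii) is built into the ansatz \eqref{eq:defxit}, so the only unknowns are the residues $(P_1,\dots,P_n)$, which I take to lie in the closed Banach subspace $\prod_j \Lambda^{\geq 0}_{\ell_j}\mathfrak{sl}(2,\C) \subset \prod_j \Lambda^{\geq 0}\mathfrak{sl}(2,\C)$ of loops whose constant term preserves the line $\ell_j$; this subspace encodes the quasi-parabolic line condition from Remark~\ref{rem:quasipara}. Using $\det(\mathfrak A_j)=-\alpha_j^2$, the nonlinear condition (iii) is equivalent to $\tr(\mathfrak A_j P_j)=\det(P_j)$, with linearization $\delta P_j \mapsto \tr(\mathfrak A_j \delta P_j)$ and kernel $\mathring{\mathcal K}^{\geq 0}_j$. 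Hence the Banach submanifold theorem produces a real-analytic chart
\[
\Phi=(\Phi_1,\dots,\Phi_n):\prod_j \mathring{\mathcal K}^{\geq 0}_j \to \prod_j \Lambda^{\geq 0}_{\ell_j}\mathfrak{sl}(2,\C)
\]
with $\Phi(0)=0$ whose image near $0$ is exactly the submanifold cut out by (iii) and whose differential at $0$ is the inclusion.

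I then define the nonlinear operator
\[
\mathcal F:(-\epsilon_0,\epsilon_0)\times \prod_j \mathring{\mathcal K}^{\geq 0}_j \longrightarrow \prod_j (\Lambda\mathfrak H \cap \mathcal K_j),\qquad (t,Q)\longmapsto \bigl(\pi^{\mathfrak H} B_j(t,Q)\bigr)_j,
\]
where $B_j(t,Q)=M_j(t)^{-1}M_j'(t)$ is the logarithmic monodromy derivative along $\gamma_j$ of the connection $d+t(\underline\xi+\hat\xi)$ with $\hat\xi=\sum_j \Phi_j(Q_j)\frac{dz}{z-p_j}$. Since $\Lambda\mathfrak{sl}(2,\C)$ with the $\ell^1$-norm is a Banach algebra, the path-ordered exponential defining $M_j$ depends real-analytically on $\xi$, so $\mathcal F$ is real-analytic in $(t,Q)$; by Lemma~\ref{lem:IFTt0} one has $\mathcal F(0,0)=0$.

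The crux is to identify $D_Q\mathcal F|_{(0,0)}$ with the map in \eqref{eq:linIFT}. Expanding the parallel transport as $\Psi_t(z)=\mathrm{Id}-t\int_q^z(\underline\xi+\hat\xi)+O(t^2)$ and applying the residue theorem yields $B_j(0,Q)=-2\pi i\bigl(\mathfrak A_j+\Phi_j(Q_j)\bigr)$. Differentiating in the $Q$-direction and using that $D\Phi_j|_0$ is the inclusion, one obtains
\[
D_Q \mathcal F\big|_{(0,0)}(\delta Q)\;=\; -2\pi i\,\bigl(\pi^{\mathfrak H}(\delta Q_j)\bigr)_j,
\]
which is, up to the nonzero scalar $-2\pi i$, exactly \eqref{eq:linIFT}. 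By Proposition~\ref{prop:solutionatpj}, applied residue-by-residue after conjugating each $\mathfrak A_j$ by $\mathrm{SU}(2)$ into the standard form of Example~\ref{example:factor}, this map is a Banach-space isomorphism, so $D_Q\mathcal F|_{(0,0)}$ is invertible.

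The real-analytic implicit function theorem with parameters then yields $\epsilon>0$ and a unique real-analytic curve $t\mapsto Q(t)$ with $Q(0)=0$ and $\mathcal F(t,Q(t))=0$, depending real-analytically on the initial data $\underline\xi$. Setting $P_j(t):=\Phi_j(Q_j(t))$ and $\hat\xi(t):=\sum_j P_j(t)\frac{dz}{z-p_j}$ produces the asserted family: (ii) is built into the ansatz, (iii) holds because $P_j(t)$ lies in the image of $\Phi_j$, and (i) holds by construction of $\mathcal F$. The hard part of the argument, and the main obstacle, is the bijectivity of \eqref{eq:linIFT}; once it is established in the appendix via an explicit analysis of the linearized monodromy at each puncture, the rest follows from the standard real-analytic IFT in the spirit of \cite{HellerHellerTraizet2025}.
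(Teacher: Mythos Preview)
Your proof follows essentially the same approach as the paper: both set up a real-analytic implicit-function-theorem argument with the $t=0$ solution supplied by Lemma~\ref{lem:IFTt0} and the invertibility of the linearization reduced, residue by residue after $\mathrm{SU}(2)$-conjugation, to Proposition~\ref{prop:solutionatpj}. Your write-up spells out a few steps in more detail (the submanifold chart handling condition~(iii), the Banach-algebra analyticity of the monodromy), but the structure and the key inputs are identical to the paper's argument.
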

\noindent

To obtain real holomorphic sections, the remaining condition is $\sum_j P_j(t)=0$. This will be arranged in the next section.

\subsection{Construction of real holomorphic sections}
We now show that, for arbitrary initial data $\underline\xi$, 
the monodromy problem can be modified in such a way
that the correction terms $P_j(t)$ satisfy an additional normalization condition.
More precisely, we will arrange that
\begin{equation}\label{eq:sum-commutator-condition}
\sum_j P_j(t)
\in
\mathfrak{su}(2)\oplus \Lambda^{+}\mathfrak{sl}(2,\C)
\subset
\Lambda^{\geq0}\mathfrak{sl}(2,\C)
\end{equation}
for $t$ sufficiently small. We then show that this condition implies $\sum_j P_j(t)=0.$

To achieve this, we allow the Higgs field to vary with $t$ within its
$\mathrm{SL}(2,\C)$--orbit, with the variation chosen orthogonally to
$\mathfrak{su}(2)$ at $t=0$.
Equivalently, we allow a $t$--dependent conjugation of the residues
$\mathfrak A_j^{(-1)}$ by elements of $\mathrm{SL}(2,\C)$.
Note that conjugation by elements of $\mathrm{SL}(2,\C)\setminus \mathrm{SU}(2)$
does not change the gauge class of the underlying Higgs field, but modifies its representative
\[
\sum_j\mathfrak A_j^{(-1)}\frac{dz}{z-p_j},
\]
which is regular at $z=\infty$, in such a way that the matrix 
\[
0\neq \sum_j {\mathfrak A}_j \in i\,\mathfrak{su}(2).
\]

This observation indicates that the normalization condition
\eqref{eq:sum-commutator-condition} is attainable and motivates the following
modified monodromy problem, where only condition~\emph{(ii)} is replaced by
\begin{itemize}
\item[(iia)]
for all $t$ sufficiently small, the residue $\Res_{\lambda=0}\xi(t)$ lies in the
$\mathrm{SL}(2,\C)$--orbit of the Higgs field $t\Phi$, and 
$\Res_{p_j}\xi(t)$ induces the  corresponding $\mathrm{SL}(2,\C)$--conjugated quasi-parabolic line at $\lambda=0$.
\end{itemize}
As in Remark~\ref{rem:quasipara} above, the second condition in (iia) follows from the first condition
in (iia) if $\Res_{p_j}\Phi\neq0.$

\begin{lemma}\label{lem:modified-monodromy}
The modified monodromy problem \emph{(i)}, \emph{(iia)}, and \emph{(iii)} admits a solution such that
\begin{equation}\label{eq:iftaddon}
\sum_j P_j
\in
\mathfrak{su}(2)\oplus \Lambda^{+}\mathfrak{sl}(2,\C)
\subset
\Lambda^{\geq0}\mathfrak{sl}(2,\C).
\end{equation}
The solution is unique up to (time--dependent) overall $\mathrm{SU}(2)$--conjugation, and can be chosen
to depend real analytically on $t$ and on $\underline\xi$.
\end{lemma}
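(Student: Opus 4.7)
The plan is to exploit the residual $\mathrm{SL}(2,\C)$-gauge freedom, parameterized modulo $\mathrm{SU}(2)$ by $i\mathfrak{su}(2)\cong \mathrm{SL}(2,\C)/\mathrm{SU}(2)$, in the choice of representative of the Higgs field within its orbit, and to cancel the $i\mathfrak{su}(2)$-obstruction to \eqref{eq:iftaddon} via an auxiliary finite-dimensional implicit function theorem argument. The motivation is clear from dimension counting: the $i\mathfrak{su}(2)$-component of the constant term in $\lambda$ of $\sum_j P_j(t)$ is exactly what is not automatic in Lemma~\ref{lem:monodromy-ift}, and the $\mathrm{SL}(2,\C)/\mathrm{SU}(2)$-freedom supplies the matching three real parameters needed to annihilate it.

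First, I would introduce an $\eta$-family of initial data. For $\eta \in i\mathfrak{su}(2)$ near $0$, set $g_\eta := \exp(\eta)$ and define
\[
\underline\xi^{\eta} := \sum_{j=1}^n \mathrm{Ad}(g_\eta)\mathfrak A_j \cdot \frac{dz}{z-p_j}.
\]
Since $\mathrm{Ad}(g_\eta)$ preserves the identity $\sum_j \mathfrak A_j = 0$, the form $\underline\xi^\eta$ is regular at infinity and represents $\mathrm{Ad}(g_\eta)\Phi$, which lies in the $\mathrm{SL}(2,\C)$-orbit of $\Phi$. Applying Lemma~\ref{lem:monodromy-ift} with this initial data produces families $P_j^\eta(t) \in \Lambda^{\ge 0}\mathfrak{sl}(2,\C)$, jointly real analytic in $(\eta, t)$, satisfying (i), (iia), and (iii).

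Next, using the decomposition $\mathfrak{sl}(2,\C) = \mathfrak{su}(2) \oplus i\mathfrak{su}(2)$, let $\pi_{\mathfrak h}$ denote the projection onto the second factor, and write $Y^{(0)}$ for the $\lambda^0$-coefficient of $Y \in \Lambda^{\ge 0}\mathfrak{sl}(2,\C)$. I would set
\[
\mathcal F(\eta, t) := \pi_{\mathfrak h}\!\Bigl(\sum_{j=1}^n P_j^\eta(t)\Bigr)^{(0)} \in i\mathfrak{su}(2).
\]
Condition \eqref{eq:iftaddon} is equivalent to $\mathcal F(\eta, t) = 0$, since the positive-$\lambda$ part of $\sum_j P_j^\eta(t)$ already lies in $\Lambda^+\mathfrak{sl}(2,\C)$. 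Since $P_j^\eta(0) = 0$, one has $\mathcal F(\eta, 0) \equiv 0$, so I would factor $\mathcal F(\eta, t) = t\,\mathcal G(\eta, t)$ with $\mathcal G$ real analytic, reducing the task to finding a real analytic $\eta(t)$ with $\mathcal G(\eta(t), t) = 0$.

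The crucial step would then be to show that $D_\eta \mathcal G(0,0)\colon i\mathfrak{su}(2) \to i\mathfrak{su}(2)$ is an isomorphism. Differentiating through the IFT of Lemma~\ref{lem:monodromy-ift}, $D_\eta \mathcal G(0,0)\eta$ equals the $i\mathfrak{su}(2)$-component of the first-order-in-$\eta$ correction to $\sum_j \partial_t P_j^\eta(0)$, which is determined by the linearized system \eqref{eq:linIFT} with source generated by $\delta \mathfrak A_j = [\eta, \mathfrak A_j]$ together with the second-order iterated-monodromy contributions along the $\gamma_j$'s. Once invertibility is verified, the map $(\eta, t) \mapsto (\mathcal G(\eta, t), t)$ is a real analytic local diffeomorphism near $(0,0)$; inverting it supplies a unique real analytic $\eta(t)$ solving $\mathcal G(\eta(t), t) = 0$, and $P_j(t) := P_j^{\eta(t)}(t)$ then solves the modified monodromy problem together with \eqref{eq:iftaddon}. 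Uniqueness modulo (time-dependent) overall $\mathrm{SU}(2)$-conjugation follows because every choice in the construction is equivariant for the residual $\mathrm{SU}(2)$-symmetry, while the $i\mathfrak{su}(2)$-freedom has been pinned down by $\mathcal F=0$. The hard part will clearly be verifying the invertibility of $D_\eta \mathcal G(0,0)$: although the dimensions of source and target match by design, establishing nondegeneracy requires an explicit residue and iterated-integral computation that tracks how an infinitesimal Hermitian-symmetric deformation of the $\mathfrak A_j$ propagates through the nonlinear IFT, with the nonvanishing outcome reflecting the genericity/stability of the underlying hyperpolygon datum via Proposition~\ref{prop:HK-KN}.
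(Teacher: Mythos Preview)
Your overall strategy---use the three real degrees of freedom in $\mathrm{SL}(2,\C)/\mathrm{SU}(2)$ to kill the $i\mathfrak{su}(2)$--component of $\sum_j P_j$---is exactly right and matches the paper's. The gap is in how you implement the $\eta$--deformation.

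You set $\underline\xi^\eta=\sum_j \mathrm{Ad}(g_\eta)\mathfrak A_j\,\tfrac{dz}{z-p_j}$ and then invoke Lemma~\ref{lem:monodromy-ift}. But for $\eta\in i\mathfrak{su}(2)\setminus\{0\}$ the conjugated residues $\mathrm{Ad}(g_\eta)\mathfrak A_j$ no longer lie in $\Lambda\mathfrak H$: a direct check gives $(\mathrm{Ad}(g_\eta)\mathfrak A_j)^*=-\mathrm{Ad}(g_\eta^{-1})\mathfrak A_j$, which differs from $-\mathrm{Ad}(g_\eta)\mathfrak A_j$ unless $g_\eta\in\mathrm{SU}(2)$. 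Consequently $B_j(0)=-2\pi i\,\mathrm{Ad}(g_\eta)\mathfrak A_j\notin\Lambda\mathfrak U$, the hypothesis of Lemma~\ref{lem:IFTt0} fails, and Lemma~\ref{lem:monodromy-ift} does not apply to $\underline\xi^\eta$ with $P_j^\eta(0)=0$. Your factorization $\mathcal F(\eta,t)=t\,\mathcal G(\eta,t)$ therefore breaks down, and the whole second IFT is built on an invalid premise.

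The paper sidesteps this by deforming the \emph{hyperpolygon data} $(v_j,w_j)\mapsto(g_\eta v_j,\,w_j g_\eta^{-1})$ and then \emph{reconstructing} $\mathfrak A_j^\eta$ via \eqref{eq:mafraA}. Matrices of the form \eqref{eq:mafraA} lie in $\Lambda\mathfrak H$ for \emph{any} $(v,w)$, so reality is automatic. The two operations agree on the $\lambda^{\pm1}$ parts (where the action is by commutator) but differ on the $\lambda^0$ part: for $\eta^\dagger=\eta$ one computes
\[
\eta.\bigl(v_j\otimes v_j^*-w_j^*\otimes w_j\bigr)_0=(|v_j|^2+|w_j|^2)\,\eta,
\]
an anticommutator rather than a commutator. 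Summing over $j$ gives the required isomorphism $i\mathfrak{su}(2)\to i\mathfrak{su}(2)$ by the single scalar $\sum_j(|v_j|^2+|w_j|^2)\neq0$. No second--order iterated--integral computation is needed; what you flagged as ``the hard part'' simply disappears once the deformation is set up correctly.
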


\begin{proof}
We must show that the infinitesimal action of $i\,\mathfrak{su}(2)$ on the Higgs field
produces precisely the $i\,\mathfrak{su}(2)$--part of the residue
$\sum_j P_j(t)$ at $z=\infty$.

Recall that the Higgs field and the zeroth--order components of $\underline\xi$ are given by
\begin{equation}\label{eq:higgs-components}
\begin{aligned}
\Phi &= \underline\xi^{(-1)}
      = \sum_j \nu_{\C}(v_j,w_j)\,\frac{dz}{z-p_j},\\
\underline\xi^{(0)} &= -2i\sum_j \nu_{I}(v_j,w_j)\,\frac{dz}{z-p_j}
\in i\,\mathfrak{su}(2).
\end{aligned}
\end{equation}
The infinitesimal action of $\eta\in i\,\mathfrak{su}(2)$ on $\Phi$ and $\Phi^{*}$ is given by
the commutator $[\eta,\cdot]$.
Since both $\Phi$ and $\Phi^{*}$ have vanishing residue at $z=\infty$
(by the assumption $p_1,\dots,p_n\in\C$),
this action does not produce any residue in the $\lambda^{-1}$-- or $\lambda$--terms.

It therefore suffices to analyze the $\lambda^{0}$--component $\underline\xi^{(0)}$.
Let $v_j\in\C^2$, $w_j\in(\C^2)^*$, and let $\eta\in i\,\mathfrak{su}(2)$.
A direct computation shows
\begin{equation}\label{eq:infinitesimal-action}
\begin{aligned}
\eta.(v_j\otimes v_j^* - w_j^*\otimes w_j)_0
&=
(\eta v_j\otimes v_j^* + v_j\otimes v_j^*\eta^\dagger
 + \eta^\dagger w_j^*\otimes w_j + w_j^*\otimes w_j\eta)_0\\
&=
(\eta v_j\otimes v_j^* + v_j\otimes v_j^*\eta
 + \eta w_j^*\otimes w_j + w_j^*\otimes w_j\eta)_0\\
&=
(|v_j|^2 + |w_j|^2)\,\eta,
\end{aligned}
\end{equation}
where $\eta^\dagger=\bar\eta^T$.
Note that  $\Res_{p_j}\Phi=0$  is equivalent to $w_j=0$, 
but $v_j\neq0$ spans the quasi-parabolic line $\ell_j$ as in Subsection~\ref{rem:Scaling}.
Thus
$
\sum_j (|v_j|^2 + |w_j|^2)\neq0,
$
and  the infinitesimal action of $i\,\mathfrak{su}(2)$ on
$\underline\xi^{(0)}$ is a linear isomorphism onto $i\,\mathfrak{su}(2)$.
Hence, by the implicit function theorem and the previous monodromy construction,
we can uniquely solve for condition~\emph{(iia)}.
Clearly, (infinitesimal) conjugation does not change conditions \emph{(i)} and \emph{(iii)}.
\end{proof}


\begin{lemma}\label{lem:sum-P-zero}
Consider the solution $\hat\xi(t)$ of the modified monodromy problem
in Lemma~\ref{lem:modified-monodromy} such that \eqref{eq:iftaddon} holds.
Then
\[
\sum_j P_j(t)=0.
\]
\end{lemma}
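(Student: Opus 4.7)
I would analyze the residue of the connection $d+\xi(t)$ at $z=\infty$,
\[
R_\infty(t)\;:=\;-t\Bigl(\sum_{j}\mathfrak A_j(t)+\sum_{j}P_j(t)\Bigr),
\]
show it vanishes, and then deduce $\sum_j P_j(t)=0$ by intersecting the resulting structural information on $R_\infty$ with the hypothesis \eqref{eq:iftaddon}.

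First, the $\SL(2,\C)$-action preserves $\sum_j\nu_{\C}(v_j,w_j)=0$ by equivariance, and likewise $\sum_j\nu_{\C}(w_j^*,v_j^*)=0$ as its Hermitian conjugate. From the defining formula \eqref{eq:mafraA} for $\mathfrak A_j$, the $\lambda^{\pm 1}$ coefficients of $\sum_j\mathfrak A_j(t)$ vanish, so $\sum_j\mathfrak A_j(t)=-2i\sum_j\nu_I(v_j(t),w_j(t))\in i\mathfrak{su}(2)$ is a $\lambda$-independent Hermitian matrix. In particular, $R_\infty(t)\in\Lambda^{\geq 0}\mathfrak{sl}(2,\C)$.

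Next, define $M_\infty(t):=(M_n(t)\cdots M_1(t))^{-1}$, the monodromy of $d+\xi(t)$ around $z=\infty$; since each $M_j(t)\in\Lambda\mathcal U$ and $\Lambda\mathcal U$ is a subgroup, $M_\infty(t)\in\Lambda\mathcal U$. A Fuchsian analysis in the coordinate $w=1/z$ writes the connection near $\infty$ in the form $d+(R_\infty/w)\,dw+(\text{holomorphic in }w)$. Because $R_\infty(0)=0$, the eigenvalues of $R_\infty(t,\lambda)$ are $O(t^2)$ for small $t$, so no resonance occurs and a normalized local fundamental solution yields $M_\infty(t,\lambda)=\exp(2\pi i R_\infty(t,\lambda))$. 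As $\exp$ is a local diffeomorphism from $\Lambda\mathfrak U$ to $\Lambda\mathcal U$ near the identity, $M_\infty\in\Lambda\mathcal U$ forces $2\pi i R_\infty\in\Lambda\mathfrak U$; by antilinearity of the involution $*$, this is equivalent to $R_\infty^*=-R_\infty$, i.e.\ $R_\infty\in\Lambda\mathfrak H$.

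The conditions $R_\infty\in\Lambda^{\geq 0}\mathfrak{sl}(2,\C)\cap\Lambda\mathfrak H$ force, by mode-matching of $R_\infty^*=-R_\infty$, that $R_\infty^{(k)}=0$ for every $k\geq 1$ and $R_\infty^{(0)}\in i\mathfrak{su}(2)$; hence $R_\infty(t)$ is a constant Hermitian loop. Rearranging $R_\infty=-t(\sum_j\mathfrak A_j+\sum_j P_j)$ gives $\sum_j P_j(t)=-\sum_j\mathfrak A_j(t)-R_\infty(t)/t\in i\mathfrak{su}(2)$, since both terms on the right lie in $i\mathfrak{su}(2)$. Finally, \eqref{eq:iftaddon} asserts $\sum_j P_j(t)\in\mathfrak{su}(2)\oplus\Lambda^+\mathfrak{sl}(2,\C)$, so $\sum_j P_j(t)\in i\mathfrak{su}(2)\cap(\mathfrak{su}(2)\oplus\Lambda^+)=\{0\}$, which concludes the proof.

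The main obstacle is the identification $M_\infty=\exp(2\pi i R_\infty)$ in the loop-algebra setting and the transfer of the $\Lambda\mathcal U$-condition on $M_\infty$ into the reality condition $R_\infty\in\Lambda\mathfrak H$. It relies on the absence of resonance (from continuity at $t=0$), the local Fuchsian normal form in the $w$-coordinate, and careful accounting for the antilinear behavior of $*$ on scalars such as $2\pi i$. Once the reality of $R_\infty$ is in place, the mode-matching and the algebraic intersection with \eqref{eq:iftaddon} are immediate.
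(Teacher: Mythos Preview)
Your overall strategy---pass from $M_\infty\in\Lambda\mathcal U$ to a reality condition on the residue at infinity, then intersect with \eqref{eq:iftaddon}---is the same as the paper's, and your steps (1), (6)--(8) are fine. The gap is in step (4): the identification $M_\infty(t,\lambda)=\exp(2\pi i R_\infty(t,\lambda))$ is not valid. The monodromy $M_\infty$ is defined relative to the frame $\Psi_t$ normalized at the base point $q$, namely $M_\infty=(M_n\cdots M_1)^{-1}$. The local Frobenius solution $H(w)\,w^{-R_\infty}$ with $H(0)=\mathrm{Id}$ does have monodromy $\exp(-2\pi i R_\infty)$, but it differs from $\Psi_t$ by a right factor $C(t,\lambda)$ determined by the global connection and the base point; hence $M_\infty=C^{-1}\exp(-2\pi i R_\infty)\,C$. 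There is no reason for $C\in\Lambda\mathcal U$ (indeed $C$ encodes iterated integrals of $\underline\xi$ along paths from $q$ toward $\infty$), so you cannot conclude $\exp(-2\pi i R_\infty)\in\Lambda\mathcal U$, and the deduction $R_\infty\in\Lambda\mathfrak H$ is unjustified.

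The paper sidesteps this by working with the logarithmic derivative $B_\infty(t)=M_\infty^{-1}M_\infty'$ and arguing order by order in $t$. From $M_j\in\Lambda\mathcal U$ one gets $B_\infty(t)\in\Lambda\mathfrak U$ for every $t$, hence every Taylor coefficient of $B_\infty$ lies in $\Lambda\mathfrak U$. Expanding $\Psi_t$ as in Lemma~\ref{lem:IFTt0}, the lowest nonvanishing Taylor coefficient of $B_\infty$ equals $2\pi i$ times the corresponding coefficient $\mathcal C^{(k)}$ of $\sum_jP_j(t)$; the point is that at that lowest order the conjugation by $C$ (the iterated-integral corrections) contributes only at higher order in $t$ and drops out. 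Then $2\pi i\,\mathcal C^{(k)}\in 2\pi i(\mathfrak{su}(2)\oplus\Lambda^+)\cap\Lambda\mathfrak U=\{0\}$ forces $\mathcal C^{(k)}=0$, and real analyticity in $t$ gives $\sum_jP_j\equiv0$. You could repair your argument along the same lines: work with Taylor coefficients rather than asserting an exact exponential formula for $M_\infty$.
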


\begin{proof}
Consider the monodromy of $d+\xi(t)$ based at $q$ as a representation
\[
\pi_1(\CP^1\setminus\{p_1,\dots,p_n,\infty\},q)\to\Lambda\mathrm{SL}(2,\C)
\]
of the $(n+1)$--punctured sphere.
Let $\gamma_\infty$ be a simple loop winding once around $z=\infty$ and satisfying
\[
\gamma_\infty * \gamma_n * \dots * \gamma_1 = 1.
\]

We compute the corresponding logarithmic derivative $B_\infty(t)$ in two ways.
On the one hand, it is given by the logarithmic derivative of
\[
M_\infty(t)
=
\bigl(M_n(t)\, M_{n-1}(t)\dots M_1(t)\bigr)^{-1}
\in
\Lambda\mathrm{SL}(2,\C).
\]
Since $B_j(t)\in\Lambda\mathfrak U$ and $M_j(0)=\mathrm{Id}$ for $j=1,\dots,n$,
we have $M_j(t)\in\Lambda\mathcal U$ for $t$ sufficiently small.
Hence $M_\infty(t)\in\Lambda\mathcal U$, and therefore
\[
B_\infty(t)\in\Lambda\mathfrak U.
\]

On the other hand, we expand the residue at $z=\infty$ in terms of $t$ at $t=0$:
\[
\sum_j P_j(t)= \mathcal C^{(k)}\, t^{k}+o(t^{k}),
\]
for some $k\geq1$ and some
\[
\mathcal C^{(k)}\in \mathfrak{su}(2)\oplus \Lambda^{+}\mathfrak{sl}(2,\C)
\]
by Lemma~\ref{lem:modified-monodromy}.
Arguing as in the proof of Lemma~\ref{lem:IFTt0}
(by expanding the parallel frame $\Psi_t$ solving
$d\Psi_t+\xi(t)\Psi_t=0$ with $\Psi_t(q)=\mathrm{Id}$ in $t$),
we obtain (for some $k\geq1$)
\[
B_\infty(t)=2\pi i\, \mathcal C^{(k)}\, t^{k}+o(t^{k}).
\]

Thus
\[
2\pi i\, \mathcal C^{(k)}\in 2\pi i\,
\bigl(\mathfrak{su}(2)\oplus \Lambda^{+}\mathfrak{sl}(2,\C)\bigr)
\cap \Lambda\mathfrak U
=
\{0\},
\]
and since $B_\infty(t)$ is real analytic in $t$, we conclude that
$B_\infty(t)=0$ for all $t$ sufficiently small.
Equivalently,
\[
\sum_j P_j(t)=0.
\]
This completes the proof.
\end{proof}

\begin{remark}[Perturbed moment map]
Let 
\[\pi^\mathfrak h\colon \Lambda^{\geq0}\mathfrak{sl}(2,\C)\to i\mathfrak{su}(2);\; \sum_{k\geq0}x_k\lambda^k\mapsto \tfrac{1}{2}(x_0+\bar x_0^T)\]
be the projection onto $i\mathfrak{su}(2)$. 
Identify strongly parabolic Higgs bundles with twistor lines \eqref{eq:xicentral} at $t=0.$
Consider
the map 
\[\nu_I(t)\colon \mathcal M_{Higgs}(t\alpha)\to\mathfrak{su}(2); 
\underline\xi\mapsto \,-i\,\pi^\mathfrak h\left(\sum_j P_j(t)\right)\]
where $P_1(t),\dots,P_n(t)$ are 
obtained from Lemma \ref{lem:monodromy-ift}.
By Lemma \ref{lem:modified-monodromy} and 
Lemma \ref{lem:sum-P-zero} (together with the uniqueness of the implicit function theorem) the map
 $\nu_I(t)$ is equivariant and can be seen as the perturbation of the real moment map $\nu_I$ for the $\mathrm{SU}(2)$-action, see Subsection \ref{sec:hptwistor} and
Remark \ref{rem:twisted}.
\end{remark}

\begin{theorem}\label{thm:real-holomorphic-section-existence}
Let $\alpha$ be small and generic.
Let $(\Pcal,\alpha,\Phi)$ be a stable strongly parabolic Higgs pair on
$(\CP^1,p_1+\dots+p_n)$ with trivial underlying holomorphic bundle, and let $\underline\xi$ be the associated
initial $\lambda$--dependent $1$--form defined in \eqref{eq:xicentral}.
Then there exists $\epsilon>0$ such that for all $t\in(-\epsilon,\epsilon)$ there is a
real analytic family
\[
\hat\xi(t)\in
H^0\!\left(
\CP^1,
K(p_1+\dots+p_n)\otimes\Lambda^{\geq0}\mathfrak{sl}(2,\C)
\right),
\qquad \hat\xi(0)=0,
\]
with the following properties:
\begin{itemize}
\item[(i)]
The connection
\[
d+t\underline\xi+t\hat\xi(t)
\]
represents the Higgs pair $(\Pcal,t\alpha,t\Phi)$ at $\lambda=0$.

\item[(ii)]
The monodromy of $d+t\underline\xi+t\hat\xi(t)$, based at $q$, satisfies
\[
B_j(t)=M_j^{-1}(t)M_j'(t)\in \Lambda\mathfrak U
\qquad \text{for all } j=1,\dots,n,
\]
that is, the monodromy representation is $\Lambda\mathcal U$--valued.

\item[(iii)]
The residues satisfy
\[
\det\!\left(\tfrac{1}{t}\Res_{p_j}\bigl(t\underline\xi+t\hat\xi(t)\bigr)\right)
=-\alpha_j^2
\qquad\text{for all } j=1,\dots,n.
\]
\end{itemize}

The family $\hat\xi(t)$ is unique up to $t$--dependent conjugation by
$\mathrm{SU}(2)$.
Moreover, $\hat\xi(t)$ can be chosen to depend locally real analytically on $t$
and on the Higgs data $(\Phi,\alpha)$; in particular, the existence interval
can be chosen locally uniformly in $(\Phi,\alpha)$.
\end{theorem}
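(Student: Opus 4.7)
The plan is to deduce Theorem~\ref{thm:real-holomorphic-section-existence} as a synthesis of Lemma~\ref{lem:modified-monodromy} and Lemma~\ref{lem:sum-P-zero}, with a verification step to check that each of the three claimed properties of $\hat\xi(t)$ follows either from a condition of the modified monodromy problem or from the vanishing of $\sum_j P_j(t)$.

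First I would invoke Lemma~\ref{lem:modified-monodromy} applied to the initial data $\underline\xi$ given by \eqref{eq:xicentral}. Since the Higgs pair $(\Pcal,\alpha,\Phi)$ is stable with trivial underlying bundle, Proposition~\ref{pro:Goma} together with Proposition~\ref{prop:HK-KN} guarantees a representative $(v,w)$ with $\sum_j\mathfrak A_j=0$, so that $\underline\xi$ is regular at $z=\infty$ at $t=0$. Lemma~\ref{lem:modified-monodromy} then yields, for $t\in(-\epsilon,\epsilon)$, a real analytic family
\[
\hat\xi(t)=\sum_{j=1}^n P_j(t)\,\frac{dz}{z-p_j},\qquad P_j(t)\in\Lambda^{\geq0}\mathfrak{sl}(2,\C),\quad \hat\xi(0)=0,
\]
satisfying the modified monodromy conditions (i), (iia), (iii) together with the normalization \eqref{eq:iftaddon}.

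Next I would apply Lemma~\ref{lem:sum-P-zero} to conclude that $\sum_j P_j(t)=0$ for all $t\in(-\epsilon,\epsilon)$. Combined with $\sum_j\mathfrak A_j=0$, this ensures that the $\mathfrak{sl}(2,\C)$-valued meromorphic $1$-form $t\underline\xi+t\hat\xi(t)$ has vanishing residue at $\infty$, and therefore $d+t\underline\xi+t\hat\xi(t)$ extends to a genuine family of $\lambda$-connections on all of $\CP^1$ with simple poles only along $p_1+\dots+p_n$. Property (ii) of the theorem is then exactly condition (i) of the modified monodromy problem, and property (iii) is exactly condition (iii). For property (i), condition (iia) says that $\Res_{\lambda=0}(t\underline\xi+t\hat\xi(t))$ lies in the $\mathrm{SL}(2,\C)$-orbit of $t\Phi$ with correspondingly conjugated quasi-parabolic lines; since the underlying bundle is trivial, $\mathrm{SL}(2,\C)$-conjugation is precisely the action of the global holomorphic automorphism group $\mathrm{Aut}(E)=\mathrm{SL}(2,\C)$, so the Higgs pair represented at $\lambda=0$ is isomorphic to $(\Pcal,t\alpha,t\Phi)$ as a parabolic Higgs bundle.

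Finally, uniqueness up to $\mathrm{SU}(2)$-conjugation and the real analytic dependence on $(\Phi,\alpha)$ follow from the uniqueness statement in Lemma~\ref{lem:modified-monodromy} and the implicit function theorem invoked to produce $\hat\xi(t)$, with the existence interval $\epsilon$ varying locally uniformly because the linearized operator \eqref{eq:linIFT} is a bijection depending continuously on $(\Phi,\alpha)$ (within the open locus of stable small generic data). I expect no genuinely new obstacle at the level of the theorem itself; the real work has already been done in the preceding lemmas, and the decisive point is the algebraic identity $\bigl(\mathfrak{su}(2)\oplus\Lambda^+\mathfrak{sl}(2,\C)\bigr)\cap\Lambda\mathfrak U=\{0\}$ exploited in Lemma~\ref{lem:sum-P-zero}, which forces regularity at infinity and is what makes the packaging into a real holomorphic section possible.
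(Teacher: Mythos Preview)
Your proposal is correct and follows essentially the same route as the paper: the theorem is assembled from Lemma~\ref{lem:modified-monodromy} and Lemma~\ref{lem:sum-P-zero}, with the only remaining point being the local real-analytic dependence on $(\Phi,\alpha)$, which you handle via continuity of the linearized operator~\eqref{eq:linIFT}. The paper's own proof is terser and phrases the last step as choosing a local $\mathrm{SU}(2)$-normalization of the Higgs field (e.g.\ by fixing suitable residues) so that $\underline\xi$ depends real analytically on the data, but the content is the same.
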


\begin{proof}
Locally in the moduli space, we can normalize the Higgs field with respect to the
$\mathrm{SU}(2)$--freedom, for instance by fixing suitable non--vanishing residues.
Under such a local normalization, the form $\underline\xi$ depends real analytically
on the Higgs field and on the weight vector $\alpha$, as long as $\alpha$ does not
cross stability walls.
Moreover, under this normalization, the solution of the modified monodromy problem
depends real analytically on $t$ and on $\underline\xi$, which proves the theorem.
\end{proof}

This completes the analytic construction of real holomorphic sections; the next task is to identify these sections as genuine twistor lines.
In view of the hyperpolygon interpretation discussed after Theorem \ref{thm:twistor-hyperpolygon-orbits}, the result can be viewed as a deformation of hyperpolygon twistor lines for small weights.

\subsection{Twistor lines}\label{sec:constwistor}
In this subsection we show that the $\Lambda^{\geq-1}\mathfrak{sl}(2,\C)$--connections
$d+t\underline\xi+t\hat\xi(t)$ constructed in
Theorem~\ref{thm:real-holomorphic-section-existence}
give rise to twistor lines in the parabolic Deligne--Hitchin moduli space
corresponding to the parabolic weight vector $t\alpha$.
We first show that $d+t\underline\xi+t\hat\xi(t)$ defines a real holomorphic section.

\begin{lemma}\label{lem:realholosec}
Let $\alpha$ be a generic weight vector.
Let $t>0$ be sufficiently small, and let $d+t\underline\xi+t\hat\xi(t)$ be as constructed
in Theorem~\ref{thm:real-holomorphic-section-existence}.
Then there exists a real holomorphic section $s$ of the parabolic Deligne--Hitchin
moduli space $\mathcal M_{\mathrm{DH}}^{\mathrm{par}}(\alpha)$ such that, for all
$\lambda$ with $|\lambda|\leq1$, the trivial holomorphic bundle together with the
logarithmic $\lambda$--connection
\[
D_\lambda:=\lambda\bigl(\partial+t\underline\xi+t\hat\xi(t)\bigr)
\]
represents a point
$s(\lambda)\in \pi^{-1}(\lambda)\subset\mathcal M_{\mathrm{DH}}^{\mathrm{par}}(\alpha)$.
\end{lemma}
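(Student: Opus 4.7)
The plan is to define the section $s$ directly on the closed disc $\{|\lambda|\le 1\}$ from the loop-valued connection $\partial+t\underline\xi+t\hat\xi(t)$, extend to the opposite hemisphere via the real structure $\tau$, and verify gluing compatibility across $|\lambda|=1$ using the unitarity of the monodromy supplied by Theorem~\ref{thm:real-holomorphic-section-existence}(ii).

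\textbf{Section on the disc.} For each $\lambda$ with $|\lambda|\le 1$, I interpret $(E_{\mathrm{triv}},D_\lambda)$ as a polystable parabolic $\lambda$-connection with weight vector $t\alpha$. The residue at $p_j$ is $\lambda t(\mathfrak A_j+P_j(t))$, and condition (iii) gives $\det(\mathfrak A_j+P_j(t))=-\alpha_j^2$, yielding the correct eigenvalues $\pm t\alpha_j\lambda$. I take the quasi-parabolic line $\ell_j(\lambda)$ to be the $+t\alpha_j$-eigenline of $\mathfrak A_j+P_j(t)$: it depends holomorphically on $\lambda$, and by Remark~\ref{rem:quasipara} combined with condition (i) of Theorem~\ref{thm:real-holomorphic-section-existence} it extends to $\lambda=0$ as the fixed quasi-parabolic line $\ell_j$ of the Higgs pair $(\Pcal,t\alpha,t\Phi)$. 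Real-analyticity of $\xi(t)$ in $\lambda$ turns $\lambda\mapsto s(\lambda)$ into a holomorphic section. Polystability is clear at $\lambda=0$ by stability of the Higgs pair; at any $\lambda\in S^1$ the monodromy lies in $\mathrm{SU}(2)$ by condition (ii), hence is irreducible for generic small $t\alpha$ via the parabolic Mehta--Seshadri correspondence, and irreducibility then propagates to all of $\{|\lambda|\le 1\}\setminus\{0\}$ by continuity of the monodromy and Zariski-openness of the irreducible locus. This produces a holomorphic map $s\colon \{|\lambda|\le 1\}\to\mathcal M_{\mathrm{Hod}}(\Sigma)\subset\mathcal M_{\mathrm{DH}}^{\mathrm{par}}$.

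\textbf{Extension and compatibility on $S^1$.} For $|\lambda|\ge 1$ I set $s(\lambda):=\tau(s(-\bar\lambda^{-1}))$; since both $\tau$ and $\lambda\mapsto -\bar\lambda^{-1}$ are anti-holomorphic, their composition is holomorphic in $\lambda$, with image in the $\bar\Sigma$-Hodge side of $\mathcal M_{\mathrm{DH}}^{\mathrm{par}}$. The two pieces glue to a well-defined global section precisely when the reality condition $\tau(s(-\bar\lambda^{-1}))=s(\lambda)$ holds on $|\lambda|=1$. Through the identification of the Hodge moduli over $\mathbb C^*$ with the Betti moduli space, this reduces to a statement about the monodromy representation under the involution $[\rho]\mapsto [\overline{(\rho^T)^{-1}}]$. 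The loop-level relation $M_j(t)^*=M_j(t)$ from Theorem~\ref{thm:real-holomorphic-section-existence}(ii) is exactly the $*$-invariance that identifies $M_j(t)|_\lambda$ with the complex-conjugate contragredient of $M_j(t)|_{-\bar\lambda^{-1}}$, which produces the required compatibility on the Betti side and hence on $\mathcal M_{\mathrm{DH}}^{\mathrm{par}}$.

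\textbf{Main obstacle.} The hardest step is this last compatibility check: translating the loop-group $*$-involution into the Betti-level involution underlying $\tau$, and carefully matching the fiber descriptions on the $\Sigma$- and $\bar\Sigma$-sides of $\mathcal M_{\mathrm{DH}}^{\mathrm{par}}$ under the gluing over $\mathbb C^*$. Once this identification is in place the argument is essentially formal and $s$ becomes a real holomorphic section with $s(\lambda)=[(E_{\mathrm{triv}},D_\lambda)]$ for $|\lambda|\le 1$.
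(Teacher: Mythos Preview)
Your overall strategy coincides with the paper's: define $s$ on the unit disc from the Fuchsian data, check the reality condition $\tau(s(-\bar\lambda^{-1}))=s(\lambda)$ on $S^1$ via the $\Lambda\mathcal U$-valued monodromy, and extend to $\CP^1$. The paper phrases the extension as Schwarz reflection after first observing that the section is actually holomorphic on an \emph{open} neighbourhood $U\supset\overline{\mathbb D}$ (the loop data being real-analytic on $S^1$); your explicit extension by $\tau$ is the same construction, though strictly speaking you also need $s$ defined on an open set overlapping the outer piece, not only on the closed disc.

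There is, however, a genuine error in your stability discussion. Your claim that ``at any $\lambda\in S^1$ the monodromy lies in $\mathrm{SU}(2)$'' is false: the involution defining $\Lambda\mathcal U$ is $g^*(\lambda)=(\overline{g(-\bar\lambda^{-1})}^{\,T})^{-1}$, so $g^*=g$ relates the value at $\lambda$ to the value at $-\bar\lambda^{-1}$ (which on $S^1$ is $-\lambda$, not $\lambda$). Elements of $\Lambda\mathcal U$ are therefore not $\mathrm{SU}(2)$-valued loops, your Mehta--Seshadri step does not apply, and the subsequent ``propagation by Zariski-openness'' does not exclude reducibility at isolated $\lambda$ either. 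The paper does not spell out irreducibility for $\lambda\ne 0$, but it follows directly from the genericity hypothesis: an invariant line subbundle $L\subset\mathcal O^{\oplus 2}$ for the logarithmic connection would force $\sum_j\epsilon_j\, t\alpha_j=-\deg L\in\mathbb Z$ for some signs $\epsilon_j\in\{\pm1\}$, which is impossible when $t\alpha$ is small and generic. With this replacement your argument goes through.
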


\begin{proof}
First note that, by construction, $\lambda t(\underline\xi+\hat\xi(t))$ extends
holomorphically to $\lambda=0$ and gives rise to a stable strongly parabolic Higgs
field by condition~\emph{(i)} in
Theorem~\ref{thm:real-holomorphic-section-existence}.
Moreover, by condition~\emph{(iii)} in the same theorem,
$d+t\underline\xi+t\hat\xi(t)$ defines a parabolic $\lambda$--connection
corresponding to the weight vector $\alpha$ for all $\lambda$ in an open neighbourhood
$U$ of the closed unit disc.
We therefore obtain a holomorphic section $s$ of
$\mathcal M_{\mathrm{DH}}^{\mathrm{par}}(\alpha)$ over $U$.

By condition~\emph{(ii)}, the monodromy of $d+t\underline\xi+t\hat\xi(t)$ based at $q$
is $\Lambda\mathcal U$--valued.
In particular, for all $\lambda\in S^1$ we have
\[
\tau\bigl(s(-\bar\lambda^{-1})\bigr)=s(\lambda),
\]
see Section~\ref{sec:pdhms}.
Hence, by Schwarz reflection, $s$ extends to a global holomorphic section of
$\mathcal M_{\mathrm{DH}}^{\mathrm{par}}(\alpha)\to\CP^1$ which is real by construction.
\end{proof}

The real holomorphic sections constructed in Lemma~\ref{lem:realholosec} are in fact
twistor lines. To see this, we need a few preparatory results.

\subsubsection*{Harmonic maps}

The relation between harmonic maps to hyperbolic $3$--space and solutions of
Hitchin's self--duality equations is classical \cite{Donald}.
For further details, see \cite[Section~3.2]{OSWW} or \cite[Section~2.5]{BBDH}.
Let $\Sigma$ be a Riemann surface, and let $h$ be a Hermitian metric on a holomorphic
rank~$2$ vector bundle $V\to\Sigma$ with trivial determinant.
Let $D_h$ be the Chern connection associated to $h$ and $V$, and let $\Phi^*$
denote the adjoint of a Higgs field $\Phi$ with respect to $h$.
Assume that $D_h+\Phi+\Phi^*$ is flat, i.e.\ that $(V,h,\Phi)$ is a solution of
Hitchin's self--duality equations.
In particular, $D_h-\Phi-\Phi^*$ is also flat, and the corresponding monodromy
representations are complex--conjugate contragredient representations of each other.

Consider parallel frames $F_\pm$ with respect to $D_h\pm\Phi\pm\Phi^*$, normalized
by $F_\pm(q)=\mathrm{Id}$ at a fixed base point $q\in\Sigma$.
Then
\begin{equation}\label{eq:symbobenko}
f
=
F_-^{-1}F_+
=
\bar F_+^{\,T} F_+
\colon
\widetilde\Sigma\to \mathbb H^3
\cong
\{\,f\in\mathrm{SL}(2,\C)\cap i\mathfrak{su}(2)\mid f>0\,\}
\end{equation}
defines an equivariant harmonic map to hyperbolic $3$--space.
Here, hyperbolic $3$--space is identified with the space of positive definite
Hermitian matrices of determinant~$1$.


\subsubsection{Loop group factorization method}

The loop group factorization method was introduced for harmonic maps from simply connected surfaces into compact
symmetric spaces by \cite{DPW}.
See \cite{HellerHellerTraizet2025} for details in the case of equivariant
harmonic maps to $\mathbb H^3$ from surfaces with non-trivial topology.

Consider $\eta\in H^0(\Sigma, K\otimes \Lambda^{\geq-1}\mathfrak{sl}(2,\C))$, and let
$\Psi$ be a solution of
\[
d\Psi+\eta\Psi=0,
\qquad
\Psi(q)=\mathrm{Id}.
\]
In general, $\Psi$ may have non--trivial monodromy and is therefore well defined
only on the universal cover $\widetilde\Sigma$.
Assume that the Iwasawa decomposition
\[
\Psi(z)=B(z)F(z),
\qquad
B(z)\in \Lambda^{+}_{B}\mathrm{SL}(2,\C),
\quad
F(z)\in \Lambda\mathcal U,
\]
exists for all $z\in\widetilde\Sigma$.
Furthermore, assume that the monodromy of $\Psi$ based at $q$ is contained in
$\Lambda\mathcal U$.
Then the uniqueness of the Iwasawa decomposition implies that the monodromy of
$B$ based at $q$ (and hence everywhere) is trivial.

Moreover,
\[
(d+\eta).\,B
=
dF.\, F^{-1}
\in \Omega^1(\Sigma,\Lambda\mathfrak U)
\]
is well-defined on $\Sigma$.
Using that $B$ extends holomorphically to $\lambda=0$, and comparing the left--hand
side and the right--hand side of the above equation, shows that
$(d+\eta).\,B$ has the form of the associated family of flat connections
$\nabla_\lambda$ as in \eqref{eq:assocfam}.

Finally, $F_\pm:=F(\lambda=\pm1)$ are parallel frames for $\nabla_{\pm1}$, and
\eqref{eq:symbobenko} yields an (equivariant) harmonic map to hyperbolic $3$--space.

\subsubsection{Tameness}

Let $\alpha>0$ and fix an $\mathrm{SL}(2,\C)$--frame $(e_1,e_2)$.
The Hermitian metric
\[
h_{\mathrm{mod}}(z)
=
\begin{pmatrix}
 (z\bar z)^\alpha & 0 \\
 0 & (z\bar z)^{-\alpha}
\end{pmatrix}
\]
is called the \emph{model metric} with respect to the weight $\alpha$ and the
complex line $\ell:=\C e_1$.

Following \cite{Sim2}, a map
\[
h\colon \mathbb D\setminus\{0\}\longrightarrow \mathbb H^3
\]
is called \emph{tame} at $0$ with respect to the weight $\alpha>0$ and the
complex line $\ell\subset\C^2$ if its hyperbolic distance to the
corresponding model metric is bounded, i.e.\ if there exists $C>0$ such that
\[
d^{\mathrm{hyp}}\bigl(h(z),h_{\mathrm{mod}}(z)\bigr)< C
\qquad
\text{for all } z\in \mathbb D\setminus\{0\}.
\]


\begin{remark}
Note that
tameness of $h$ is not measured with respect to a parallel frame, but rather with respect
to a frame of the underlying holomorphic bundle.
However, in the case of a solution of Hitchin's equations, we will also call the induced
equivariant harmonic map $f$ a \emph{tame harmonic map}.
\end{remark}

\begin{proposition}\label{prop:tame-bounded-cover}
Let $(E,\Phi)$ be a rank~$2$ strongly parabolic Higgs bundle on the disc $\mathbb D$ with parabolic
weight $0<\alpha<\tfrac{1}{2}$ at $p \in \mathbb D$. Let $h$ be a Hermitian metric on
$E_{\mid \mathbb D\setminus \{p\}}$ solving Hitchin's equations.
Denote by $f$ the equivariant harmonic map
 associated to the solution of the self--duality equations.
Then $h$ is tame at $p$ if and only if $f$ is bounded near $p$.
\end{proposition}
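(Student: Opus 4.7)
The plan is to view $h$ and $f$ as two $\mathbb H^3$-valued realisations of the same Hermitian metric on $E$, related by parallel transport of $\nabla=D_h+\Phi+\Phi^*$, and to exploit that $\mathrm{SL}(2,\C)$ acts on $\mathbb H^3$ by isometries via $g\cdot M=g^\dagger M g$. Concretely, in a holomorphic frame $(e_1,e_2)$ near $p$ adapted to the parabolic structure so that $\ell=\C e_1$, and normalising so that $h(q)=\mathrm{Id}$ at a fixed base point $q$, the equivariant harmonic map takes the local form
\[
 f(z)=G(z)^\dagger\, h(z)\, G(z),
\]
where $G\colon\widetilde{\mathbb D\setminus\{p\}}\to\mathrm{SL}(2,\C)$ is the matrix of parallel transport of $\nabla$ in this frame. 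This is the local content of the formula $f=\bar F_+^T F_+$ of Section~\ref{sec:constwistor}: $f(z)$ is the metric $h(z)$ read in a parallel frame.

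Applying the isometric action with $g=G(z)$ to the pair $(h(z),h_{\mathrm{mod}}(z))$ yields
\[
 d^{\mathrm{hyp}}\bigl(f(z),\,g_{\mathrm{ref}}(z)\bigr)=d^{\mathrm{hyp}}\bigl(h(z),\,h_{\mathrm{mod}}(z)\bigr),\qquad g_{\mathrm{ref}}(z):=G(z)^\dagger h_{\mathrm{mod}}(z) G(z),
\]
so that tameness of $h$ translates into $f$ staying at bounded hyperbolic distance from the auxiliary reference map $g_{\mathrm{ref}}$. The entire equivalence then reduces to the single local claim that \emph{$g_{\mathrm{ref}}$ is bounded on a punctured neighbourhood of $p$}: the forward implication follows at once, while the converse uses the triangle inequality $d^{\mathrm{hyp}}(h,h_{\mathrm{mod}})\le d^{\mathrm{hyp}}(f,\mathrm{Id})+d^{\mathrm{hyp}}(\mathrm{Id},g_{\mathrm{ref}})$.

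The main obstacle is thus establishing this local boundedness of $g_{\mathrm{ref}}$. To do so I would use that $\mathrm{Res}_p\nabla$ is conjugate to $\mathrm{diag}(\alpha,-\alpha)$ with the $+\alpha$-eigenline equal to $\ell$, together with non-resonance of $2\alpha\in(0,1)$. The classical normal-form theory for Fuchsian connections then provides a local holomorphic gauge transformation $G_0$, with $G_0(0)=\mathrm{Id}$ in a frame diagonalising the residue, such that
\[
 G(z)=G_0(z)\,z^{-M},\qquad M=\mathrm{diag}(\alpha,-\alpha).
\]
In this diagonal frame $h_{\mathrm{mod}}(z)=\mathrm{diag}(|z|^{2\alpha},|z|^{-2\alpha})$ and one has the exact identity $\bar z^{-M}h_{\mathrm{mod}}(z)z^{-M}=\mathrm{Id}$; substituting gives $g_{\mathrm{ref}}(z)=\bar z^{-M}G_0(z)^\dagger h_{\mathrm{mod}}(z)G_0(z)\,z^{-M}$. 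An entry-by-entry inspection of the Taylor expansion $G_0=\mathrm{Id}+zN+O(z^2)$ shows that the diagonal entries are $1+O(|z|)$ while the off-diagonal entries carry only factors of the form $z^{1\pm 2\alpha}$ and $\bar z^{1\pm 2\alpha}$, each bounded as $z\to p$ because $\alpha\in(0,\tfrac12)$.

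The hard part is precisely this matching between the exponent structure of the parallel transport of a regular singular connection and the growth of the model metric: the cancellation is possible only because the parabolic line $\ell$ is aligned with the $+\alpha$-eigenline of $\mathrm{Res}_p\nabla$, which is the defining compatibility of the parabolic structure. Without this alignment, residual $|z|^{\pm 2\alpha}$-factors would survive off-diagonal and destroy boundedness, which reflects the fact that the parabolic weight $\alpha$ encodes exactly the growth rate admissible by tameness. A complete argument would invoke the detailed asymptotic analysis of Simpson and Mochizuki \cite{Sim2, Mo} for the convergence of $G_0$ and the extension of $\nabla$ across $p$, but the essence of the proposition is captured by the single algebraic cancellation above.
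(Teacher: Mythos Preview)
Your reduction to the boundedness of the reference map $g_{\mathrm{ref}}=G^\dagger h_{\mathrm{mod}}G$ via the isometric $\mathrm{SL}(2,\C)$-action on $\mathbb H^3$ is a clean way to organise the statement, and the cancellation $\bar z^{-M}h_{\mathrm{mod}}z^{-M}=\mathrm{Id}$ is correct. However, the step where you invoke the Fuchsian normal form $G(z)=G_0(z)\,z^{-M}$ with $G_0$ \emph{holomorphic} does not apply as written. You are working in a holomorphic frame for the Higgs bundle $E$, in which the $(0,1)$-part of $\nabla=D_h+\Phi+\Phi^{*_h}$ is $\bar\partial+\Phi^{*_h}$, not $\bar\partial$; hence $\nabla$ is not a meromorphic connection in this frame and the classical Fuchsian theory for holomorphic ODEs is not directly available. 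One can pass to a frame holomorphic for the Deligne extension of $\nabla$, where $\nabla$ \emph{is} logarithmic, but then $h_{\mathrm{mod}}$ must be transported along the gauge change between the two holomorphic structures, and controlling that gauge change is exactly the content of Simpson's asymptotic analysis. This is also where the hypothesis you never explicitly use---that $\operatorname{Res}_p\Phi$ is \emph{nilpotent}---becomes essential: it forces $\Phi^{*_h}$ (computed with $h_{\mathrm{mod}}$) to be $o(|z|^{-1})$ since $4\alpha-1<1$, so the two holomorphic structures differ only mildly near $p$. The paper's proof simply cites \cite[Section~7]{Sim2} for this, with nilpotency singled out as the crucial hypothesis.

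There is a further circularity in your converse direction. To bound $g_{\mathrm{ref}}$ you assume that $\operatorname{Res}_p\nabla$ is conjugate to $\mathrm{diag}(\alpha,-\alpha)$ with $\ell$ as the $+\alpha$-eigenline; but this residue structure is part of Simpson's conclusion for \emph{tame} $h$, which is what you are trying to prove. From boundedness of $f$ alone one only obtains that the local monodromy of $\nabla$ is conjugate into $\mathrm{SU}(2)$, not that its eigenvalues are $e^{\pm 2\pi i\alpha}$. The paper sidesteps this by appealing directly to Simpson's classification (and, for rational weights, to the orbifold covering argument of \cite{Nas}), rather than attempting an explicit reconstruction of the asymptotics.
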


\begin{proof}
This follows from \cite[Section~7]{Sim2}, since the residue of the Higgs field is
nilpotent.
For rational weights, this also follows from \cite{Nas}, as the harmonic map lifts
to an equivariant harmonic map with rotational symmetry on the covering.
\end{proof}

\begin{lemma}[Model solution at one puncture]\label{lem:analysisononeleg}
Let $0<\alpha<\tfrac{1}{2}$, $\beta\in\C$, and set $\mu=i\sqrt{\alpha^2+\beta\bar\beta}$.
Consider
\begin{equation}\label{eq:mfx}
\mathfrak X
=
\lambda^{-1}
\begin{pmatrix}
0 & \beta \\
0 & 0
\end{pmatrix}
+
\begin{pmatrix}
\mu & 0 \\
0 & -\mu
\end{pmatrix}
+
\lambda
\begin{pmatrix}
0 & 0 \\
\bar\beta & 0
\end{pmatrix}
\in
\Lambda\mathcal U,
\end{equation}
and define $\eta=A\,\tfrac{dz}{z}$ with $A:=-i\mathfrak X$.
Then the monodromy of $\eta$ has values in $\Lambda\mathcal U$, and the parallel
frame $\Psi$ of $d+\eta$ admits an Iwasawa decomposition for all
$z\in\mathbb D\setminus\{0\}$.
The corresponding equivariant harmonic map is tame, and constant if $\beta=0$.
\end{lemma}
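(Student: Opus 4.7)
The plan is to verify the three assertions---unitary monodromy, Iwasawa existence on the full punctured disc, and tameness (with constancy when $\beta=0$)---by explicit computation, reducing the general case to the model in Example~\ref{example:factor} via a constant $\mathrm{SU}(2)$-conjugation.

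First I would check that $\mathfrak X^*=\mathfrak X$, which follows from a direct calculation using that $\mu$ is purely imaginary: the $\lambda^{-1}$- and $\lambda$-terms swap correctly under the involution, and the diagonal constant term is invariant. Consequently $A=-i\mathfrak X\in\Lambda\mathfrak H$ and $-2\pi i A\in\Lambda\mathfrak U$. The parallel frame of $d+\eta$, normalized at a base point $q\in(0,1)$, is $\Psi(z)=\exp(-A\log(z/q))$, multi-valued on $\mathbb D\setminus\{0\}$, and its monodromy around the puncture equals $\exp(-2\pi i A)\in\Lambda\mathcal U$, settling the first claim.

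Next, I would bring $A$ into the canonical form of Example~\ref{example:factor}. A direct computation shows $\det A=-\alpha^2$ and
\[
A=\begin{pmatrix} R & -i\beta\lambda^{-1}\\ -i\bar\beta\lambda & -R\end{pmatrix},\qquad R:=\sqrt{\alpha^2+|\beta|^2}.
\]
Writing $\beta=|\beta|e^{i\theta}$ and choosing $g=\mathrm{diag}(u,u^{-1})\in\mathrm{SU}(2)$ with $u^{2}=-ie^{-i\theta}$, one verifies $gAg^{-1}=A_{\mathrm{ex}}$, the matrix of Example~\ref{example:factor} with parameter $r=\sqrt{|\beta|/(2\alpha)}$ (the sign of $R$ places us in the non-degenerate $A$-case, not the $\tilde A$-case). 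The identity $\log(z/q)=\log(|z|/q)+i\arg z$ gives
\[
\Psi_{\mathrm{ex}}(z)=\Psi_{\mathrm{ex}}(|z|)\cdot\exp(-iA_{\mathrm{ex}}\arg z),
\]
whose second factor lies in $\Lambda\mathcal U$ since $-iA_{\mathrm{ex}}\in\Lambda\mathfrak U$. Combined with the explicit decomposition $\Psi_{\mathrm{ex}}(|z|)=B_{\mathrm{ex}}(|z|)F_{\mathrm{ex}}(|z|)$ from Example~\ref{example:factor}, this yields an Iwasawa decomposition of $\Psi_{\mathrm{ex}}(z)$ for every $z\in\mathbb D\setminus\{0\}$. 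Transferring via $\Psi(z)=g^{-1}\Psi_{\mathrm{ex}}(z)g$ uses that the big cell is invariant under left- and right-multiplication by constant $\mathrm{SU}(2)$-loops; the strict normalization $B(0)\in\mathcal B$ is restored by applying the finite-dimensional Iwasawa $\mathrm{SL}(2,\C)=\mathcal B\cdot\mathrm{SU}(2)$ to the value of the positive factor at $\lambda=0$ and absorbing the resulting $\mathrm{SU}(2)$-correction into the $\Lambda\mathcal U$-factor.

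Finally, tameness follows from the explicit formulas in Example~\ref{example:factor}: evaluating $F_{\mathrm{ex}}(|z|)$ at $\lambda=1$, every entry has a finite limit as $|z|\to 0$, so the associated harmonic map $f_{\mathrm{ex}}=F_{\mathrm{ex}}^{\dagger}F_{\mathrm{ex}}|_{\lambda=1}$ to $\mathbb H^3$ is bounded near the puncture. Since the $\mathrm{SU}(2)$-conjugation acts on $\mathbb H^3$ by isometries (via $h\mapsto g^{-1}hg$ for $g\in\mathrm{SU}(2)$), the harmonic map $f$ corresponding to our $\Psi$ is likewise bounded, and Proposition~\ref{prop:tame-bounded-cover} yields tameness of $h$. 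When $\beta=0$ the matrix $A$ collapses to the constant $\mathrm{diag}(\alpha,-\alpha)$, and a direct computation produces $F(z)\in\mathrm{SU}(2)\subset\Lambda\mathcal U$, whence $f=F^\dagger F\equiv\mathrm{Id}\in\mathbb H^3$ is constant. The main technical subtlety lies in tracking the strict condition $B(0)\in\mathcal B$ through the $\mathrm{SU}(2)$-conjugation step; all remaining points reduce to routine manipulations of the explicit formulas from Example~\ref{example:factor}.
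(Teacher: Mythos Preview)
Your proof is correct and follows essentially the same route as the paper: both reduce the general $\beta$ to the explicit model of Example~\ref{example:factor}, split $\log z$ into its real and imaginary parts to peel off a $\Lambda\mathcal U$-factor $\exp(-iA\varphi)$, and then read off boundedness of the harmonic map from the explicit formulas for $F(x)$. The only cosmetic difference is the symmetry used for the reduction: the paper invokes the $S^1$-action by rotation in the spectral parameter $\lambda$ (which preserves both $\Lambda^{\geq0}_B\mathrm{SL}(2,\C)$ and $\Lambda\mathcal U$ directly, so no renormalization is needed), whereas you conjugate by a constant diagonal $\mathrm{SU}(2)$ element and then restore the $\mathcal B$-condition via the finite-dimensional Iwasawa decomposition---a perfectly valid extra step that you handle correctly.
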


\begin{proof}
On the universal cover, a solution of the parallel frame equation is given by
\[
\Psi(z)=\exp(-A\log z),
\]
with monodromy around $z=0$ given by
\[
\exp(2\pi i A)\in\Lambda\mathcal U.
\]
That the Iwasawa decomposition exists for all $z\in\mathbb D\setminus\{0\}$ follows
from Example~\ref{example:factor} as follows. 
It works completely analogous to the proof of Proposition~19 in \cite{CHHT}.
Using the $S^1$--action (by rotation in the loop parameter $\lambda$), we may
assume without loss of generality that $\beta>0$.
Writing $z=x e^{i\varphi}$ with $x>0$ and $\varphi\in\R$, we obtain the Iwasawa decomposition of $\Psi$ to be
\begin{equation}\label{eq:spherefac}
\Psi(z)
=
\exp(-A\log x)\exp(-iA\varphi)
=
B(x)\bigl(F(x)\exp(-iA\varphi)\bigr),
\end{equation}
for $F(x)\in\Lambda\mathcal U$,
$B(x)\in\Lambda^{\geq0}\mathrm{SL}(2,\C)$ are given in Example \ref{example:factor}, and $\exp(-iA\varphi)\in\Lambda\mathcal U$.

Using this explicit factorization \eqref{eq:spherefac} together with the
Sym--Bobenko formula \eqref{eq:symbobenko}, one checks directly that the
corresponding equivariant harmonic map $f$ is bounded near $z=0$.
We omit the details. If $\beta=0$, $\mathfrak X$ and hence $\Psi$ are independent of $\lambda$.
Hence, \eqref{eq:symbobenko} gives a constant map.
\end{proof}

\begin{theorem}\label{thm:twistor-lines}
Let $\alpha$ be generic and let $t>0$ be sufficiently small.
Let $s$ be the real holomorphic section of the parabolic Deligne--Hitchin moduli space
$\mathcal M_{\mathrm{DH}}^{\mathrm{par}}(t\alpha)$ constructed in
Lemma~\ref{lem:realholosec}.
Then $s$ is a twistor line.
\end{theorem}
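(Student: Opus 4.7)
The plan is to show that the $\lambda$-connection $d+t\underline\xi+t\hat\xi(t)$ is the extended parallel frame of a tame solution to Hitchin's self-duality equations on $(\mathbb{CP}^1,\mathbf D)$ with parabolic weight $t\alpha$, via the loop-group (DPW-type) factorization method. Setting $\eta:=t\underline\xi+t\hat\xi(t)$, let $\Psi$ be the solution to $d\Psi+\eta\Psi=0$ with $\Psi(q)=\mathrm{Id}$ on the universal cover $\widetilde{\mathbb{CP}^1\setminus\mathbf D}$. I would first establish a pointwise Iwasawa decomposition $\Psi(z)=B(z)F(z)$ with $B\in\Lambda^{\ge0}_B\mathrm{SL}(2,\C)$ and $F\in\Lambda\mathcal U$ on the entire universal cover, then use it to produce a flat family $(d+\eta).B = -dF\cdot F^{-1}\in\Omega^1(\Sigma,\Lambda\mathfrak U)$ of the required form.

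For the Iwasawa existence step, away from the punctures I would argue by continuity in $t$: at $t=0$ one has $\eta=0$ and $\Psi\equiv\mathrm{Id}$, which lies in the big cell and admits the trivial factorization, so by openness of the big cell and real-analytic dependence on $t$ the decomposition persists on any fixed compact set for $t$ small enough. Near a puncture $p_j$, the asymptotic behavior of $\Psi$ is governed by the residue $t(\mathfrak A_j+P_j(t))$; after conjugating by an element of $\mathrm{SU}(2)$ diagonalizing $\mathfrak A_j$, this residue matches the model \eqref{eq:mfx} of Lemma~\ref{lem:analysisononeleg} with parameters $(t\alpha_j,t\beta_j)$, whose Iwasawa decomposition exists on the full punctured disc by the explicit formula of Example~\ref{example:factor}. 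The actual parallel frame differs from the model $\exp(-t\mathfrak A_j\log(z-p_j))$ by a factor that extends holomorphically across $p_j$, so by openness of the big cell the Iwasawa decomposition extends to a deleted neighbourhood of each $p_j$, and hence globally.

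Once $\Psi=BF$ globally, the reality condition $B_j(t)\in\Lambda\mathfrak U$ combined with the uniqueness of Iwasawa decomposition in the big cell forces the monodromy of $B$ around each $\gamma_j$ to be trivial. Hence $B$ descends to a real-analytic section on $\mathbb{CP}^1\setminus\mathbf D$, and the gauge-transformed connection $\nabla^\lambda:=(d+\eta).B$ takes values in $\Lambda\mathfrak U$. Because $\eta$ lies in $\lambda^{-1}\Lambda^{\ge0}\mathfrak{sl}(2,\C)$, $B$ extends holomorphically to $\lambda=0$, and $B^{-1}(d+\eta)B$ is of the form $\lambda^{-1}\Phi'+D'+\lambda(\Phi')^{*_{h}}$ with $\Phi'=t\Phi$ (up to the $B(0)$-gauge) and $D'$ the Chern connection of the Hermitian metric $h:=\bar F_+^{\,T} F_+$ arising from $F$. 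This is precisely the shape \eqref{eq:assocfam} of the associated family of a solution to the self-duality equations.

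It remains to verify tameness, which is the main technical hurdle. By Proposition~\ref{prop:tame-bounded-cover}, it suffices to show the equivariant harmonic map $f=\bar F_+^{\,T}F_+$ into $\mathbb H^3$ is bounded near each $p_j$. I would reduce this to the local model: near $p_j$, after the $\mathrm{SU}(2)$-conjugation diagonalizing $\mathfrak A_j$, the frame $\Psi$ factors as a model frame $\exp(-t\mathfrak A_j\log(z-p_j))$ times a holomorphic germ, and by uniqueness of Iwasawa the local harmonic map differs from the model harmonic map in Lemma~\ref{lem:analysisononeleg} by a factor controlled by a bounded holomorphic change of frame. Since the model harmonic map is tame (bounded) by Lemma~\ref{lem:analysisononeleg}, the same holds for $f$. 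Together with condition (iii) of Theorem~\ref{thm:real-holomorphic-section-existence}, which fixes the residue eigenvalues to be $\pm t\alpha_j$, this shows that $(D',\Phi',h)$ is a tame solution of Hitchin's equations with the prescribed parabolic weights, so $s$ is the twistor line associated to this solution. The main difficulty is the comparison with the local model near each puncture, which essentially replays the $n=4$ analysis of \cite{HellerHellerTraizet2025} uniformly across the $n$ punctures.
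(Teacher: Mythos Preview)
Your overall strategy coincides with the paper's: global Iwasawa factorization of the parallel frame $\Psi_t$, descent of the positive factor $B$ to $\CP^1\setminus\{p_1,\dots,p_n\}$ via the $\Lambda\mathcal U$--valued monodromy, identification of $(d+\eta).B$ with an associated family \eqref{eq:assocfam}, and tameness via boundedness of the equivariant harmonic map at each puncture. The compactness argument away from the punctures and the reduction of tameness to the single--puncture model (Lemma~\ref{lem:analysisononeleg}) are exactly as in the paper.

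There is, however, a genuine gap in your local analysis near $p_j$. You assert that after $\mathrm{SU}(2)$--conjugation the residue $t(\mathfrak A_j+P_j(t))$ ``matches the model \eqref{eq:mfx}'', and that $\Psi_t$ differs from $\exp(-t\mathfrak A_j\log(z-p_j))$ by a factor holomorphic at $p_j$. Neither statement is correct: only the leading part $t\mathfrak A_j$ can be brought to the three--term form \eqref{eq:mfx}, while the correction $tP_j(t)\in\Lambda^{\ge0}\mathfrak{sl}(2,\C)$ contributes additional $\lambda$--coefficients to the residue. Consequently the Frobenius quotient $\Psi_t\cdot\exp(t\mathfrak A_j\log(z-p_j))$ is \emph{not} holomorphic at $p_j$; it carries a residual branching governed by $tP_j(t)$, so openness of the big cell alone does not propagate the factorization into the puncture. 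The paper avoids this by comparing $\Psi_t$ directly with the single--puncture model frame via a Gr\"onwall estimate (referring to \cite[Lemma~24]{CHHT} for the details): the difference of connection forms is $tP_j(t)\tfrac{dz}{z-p_j}$ plus terms holomorphic near $p_j$, and since $P_j(t)=O(t)$ the singular contribution is $O(t^2)\tfrac{dz}{z-p_j}$, which after integration against the $|z-p_j|^{-t\alpha_j}$ growth of the frames still yields uniform closeness for small $t$. This is precisely where the smallness of $t$ enters in an essential way, and it replaces your holomorphic--factor argument.
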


\begin{proof}
Let $t>0$.
By the previous discussion, the real holomorphic section $s$ of 
$\mathcal M_{\mathrm{DH}}^{\mathrm{par}}(t\alpha)$
given by
$d+t\underline\xi+t\hat\xi(t)$  is a twistor line if and only if
the following two conditions are satisfied:
\begin{itemize}
\item[(1)]
There exists a well--defined gauge transformation
\[
B=B_t\colon \CP^1\setminus\{p_1,\dots,p_n\}\to \Lambda^{\geq0}\mathrm{SL}(2,\C)
\]
such that
\[
(d+t\underline\xi+t\hat\xi(t)). B
=:
d+\omega
\quad \text{with} \quad
\omega\in\Omega^1(\CP^1\setminus\{p_1,\dots,p_n\},\Lambda\mathfrak U).
\]
In this case, $d+\omega$ has the form of an associated family of flat connections
$\nabla_\lambda$ as in \eqref{eq:assocfam}.
\item[(2)]
The equivariant harmonic map obtained via \eqref{eq:symbobenko} is bounded
 in a 
neighbourhood of each $p_j$, $j=1,\dots,n$.
\end{itemize}

For (1), consider a solution of
\[
d\Psi_t+(\,t\underline\xi+t\hat\xi(t)\,)\,\Psi_t=0,
\qquad
\Psi_t(q)=\mathrm{Id},
\]
where $q\in \C\setminus\{p_1,\dots,p_n\}$ is the base point of the fundamental group, as in
Theorem~\ref{thm:real-holomorphic-section-existence}.
On compact subsets $\tilde K$ of the universal covering of
$\CP^1\setminus\{p_1,\dots,p_n\}$, we have the uniform expansion
\[
\Psi_t=\mathrm{Id}+O(t).
\]
Since the Iwasawa decomposition exists on an open subset of
$\Lambda\mathrm{SL}(2,\C)$ containing the identity, there exists $\varepsilon>0$
such that the Iwasawa decomposition of $\Psi_t(z)$ exists for all
$0\leq t<\varepsilon$ and all $z\in \tilde K$.
After possibly choosing 
 $\epsilon$ smaller, we may assume without loss of generality that  $\tilde K$ contains the endpoints of
the lifts of the generators $\gamma_1,\dots,\gamma_n$ of the fundamental group,
all starting at the base point$q$.
Since the monodromy of $\Psi_t$ is contained in $\Lambda\mathcal U$ by construction,
uniqueness of the Iwasawa decomposition implies that the positive factor $B_t$ has trivial monodromy.
In particular, the positive part of the factorization is well defined on the image
$K\subset \CP^1\setminus\{p_1,\dots,p_n\}$ of $\tilde K$ under the covering map.

Next, we analyze the factorization of $\Psi_t$ near a singular point $p_j$.
For $t$ sufficiently small and around $p_j$ fixed, the difference between $\Psi_t$ and the solution for a single puncture, see Lemma \ref{lem:analysisononeleg}, can be uniformly controlled by Gr\"onwall inequality.
(The details are completely analogous to the proof of Lemma~24 in \cite{CHHT} and will
be omitted here.)
Using again that the Iwasawa decomposition exists on an open subset of
$\Lambda\mathrm{SL}(2,\C)$, together with the fact that the Iwasawa decomposition
$\Psi_t=B_tF_t$ exists for solutions with a single puncture by Lemma~\ref{lem:analysisononeleg},
we obtain that, for $t$ sufficiently small, the Iwasawa decomposition exists on the
entire universal covering of $\CP^1\setminus\{p_1,\dots,p_n\}$.
As a consequence, $B=B_t$ is well defined on
$\CP^1\setminus\{p_1,\dots,p_n\}$, and
\[
(d+t\underline\xi+t\hat\xi(t)). B_t
=
dF_t. F_t^{-1}
\in
\Omega^1(\CP^1\setminus\{p_1,\dots,p_n\},\Lambda\mathfrak U).
\]
For point (2), we first note that $F_t$ depends continuously on $t$ and is
(uniformly) close to the unitary part of the factorization \eqref{eq:expIwa} used in Lemma~\ref{lem:analysisononeleg}. Thus, by Lemma~\ref{lem:analysisononeleg} together
with \eqref{eq:symbobenko} implies that the corresponding harmonic map is bounded,
i.e.\ the metric is tame.
This completes the proof.
\end{proof}




\section{The semiclassical limit of the Hitchin metric}\label{sec:scl}
The aim of this section is to describe the semiclassical limit of the Hitchin
metric on moduli spaces of strongly parabolic Higgs bundles as the
parabolic weights $t\alpha$ tend to zero.
More precisely, we show that, after restricting to energy sublevel sets of order
$Ct$ and rescaling the metric by $t^{-1}$, the Hitchin metric converges to the
hyperk\"ahler metric on the corresponding hyperpolygon space.
This comparison is carried out via the degeneration of the associated twistor
lines constructed in Section~\ref{sec:construction}.

\subsection{Energy}

Fix a generic and small weight vector $\alpha$ and a constant $C>0$.
Let $\mathcal M_{Higgs}(\alpha)$ be the moduli space of rank~$2$
stable strongly parabolic Higgs bundles with parabolic weights $\alpha$, equipped
with the Hitchin hyperk\"ahler metric $g^{Hit}$, and let
\[
\mathcal E \colon \mathcal M_{Higgs}(\alpha)\to\R_{\ge 0};\; (\mathcal P,\Phi)\mapsto2i\int_{\CP^1}\tr(\Phi\wedge\Phi^*)
\]
denote the harmonic map energy.

To compute the energy on a $n$-punctured sphere, we first give the contribution of each puncture. Let
\[
A=\sum_{k\geq-1}A_k\lambda^k
=
\begin{pmatrix}
a & b \\
c & -a
\end{pmatrix}
\in\Lambda^{\geq-1}\mathfrak{sl}(2,\C)
\]
 such that $\det(A)=-\alpha_0^2$ for some $\alpha_0>0$.
Write
\[
a=\sum_{k\geq-1} a_k\lambda^k,\qquad
b=\sum_{k\geq-1} b_k\lambda^k,\qquad
c=\sum_{k\geq-1} c_k\lambda^k.
\]
Then
\begin{equation}\label{eq:detcon}
a_{-1}^2+b_{-1}c_{-1}
=
0
=
2 a_0 a_{-1}+b_0 c_{-1}+b_{-1}c_0.
\end{equation}

Motivated by \cite[(110)]{HellerHellerTraizet2025}, see also \cite[Section~5]{BHS},
we define
\[E(A):=
\begin{cases}
-\alpha_0+a_{0}+b_{0}\tfrac{c_{-1}}{a_{-1}} & \text{if } a_{-1}\neq0,\\[0.4em]
-\alpha_0+a_{0}-b_{0}\tfrac{a_{-1}}{b_{-1}} & \text{if } b_{-1}\neq0,\\[0.4em]
-\alpha_0-a_{0}+c_{0}\tfrac{a_{-1}}{c_{-1}} & \text{if } c_{-1}\neq0,\\[0.4em]
0 & \text{if } a_{-1}=b_{-1}=c_{-1}=0.
\end{cases}
\]
By \eqref{eq:detcon}, the different expressions for $E(A)$ coincide in the generic
case $a_{-1}b_{-1}c_{-1}\neq0$. In fact,
if $A_{-1}\neq0$, there exist $0\neq v\in\C^2$ and $0\neq w\in(\C^2)^*$ with $w(v)=0$ such
that $A_{-1}=v\otimes w$.
In this case,
\[
E(A)=-\alpha_0+w(A_0v).
\]

Moreover, for all $g\in\mathrm{SL}(2,\C)$,
\[
E(g^{-1}Ag)=E(A).
\]
Finally, for $\mathfrak A$ as in \eqref{def:mfA} with $x,y$ from \eqref{eq:vwxy} and $\det(\mathfrak A)=-\alpha_0^2$,
we obtain
\begin{equation}\label{eq:Et0}
E(\mathfrak A)=|x|^2+|y|^2,
\end{equation}
which is the standard energy of the Eguchi--Hanson space.

\begin{proposition}\label{pro:energyhitchin}
Let $s$ be a twistor line in $\mathcal M_{\mathrm{DH}}^{\mathrm{par}}(t\alpha)$
 given by a Fuchsian system
$d+t\underline \xi+t\hat\xi(t)$ as in Theorem~\ref{thm:twistor-lines}, with residues
$A_j\in\Lambda^{\geq-1}\mathfrak{sl}(2,\C)$ at its singular points
$p_1,\dots,p_n$.
Then
\[
\mathcal E(s)=\sum_j E(A_j).
\]
\end{proposition}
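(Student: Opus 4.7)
The strategy is a localization/residue-type computation that expresses $\mathcal{E}(s)=2i\int_{\CP^1}\tr(\Phi\wedge\Phi^*)$ as a sum of purely local contributions at the punctures, following the template established in the four-punctured case in~\cite[(110)]{HellerHellerTraizet2025}. The functional $E(A)$ is designed precisely to package the local contribution from a Fuchsian residue whose $\lambda$-expansion is $A\in\Lambda^{\ge-1}\mathfrak{sl}(2,\C)$, and the case distinction in its definition reflects the choice of entry $(a_{-1},b_{-1},c_{-1})$ of $A_{-1}$ in which the local normalization is best expressed.

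The first step is to pass to the \emph{harmonic gauge} using the Iwasawa factorization from the proof of Theorem~\ref{thm:twistor-lines}: write
\[
d+t\underline\xi+t\hat\xi(t)\;=\;(d+\omega).\,B_t^{-1},
\qquad \omega\in\Omega^1\!\bigl(\CP^1\setminus\{p_1,\dots,p_n\},\Lambda\mathfrak U\bigr).
\]
Then $\omega=\lambda^{-1}\Phi_{\mathrm{TL}}+\omega_0+\lambda\,\Phi_{\mathrm{TL}}^{*}$ with $\omega_0$ the Chern form of the harmonic metric $h$, and the reality constraint $\omega^{*}=\omega$ forces $\Phi_{\mathrm{TL}}^{*}=-\overline{\Phi_{\mathrm{TL}}}^{\,T}$ in this frame. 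Consequently $2i\,\tr(\Phi_{\mathrm{TL}}\wedge\Phi_{\mathrm{TL}}^{*})$ is a globally defined $(1,1)$--form on the punctured sphere whose singularities near each $p_j$ are integrable thanks to nilpotency of the Higgs residues and the small-weight condition $t\alpha_j<1/2$.

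The second step is to analyze the local contribution near each puncture. On a disk $D_\varepsilon(p_j)$ one applies Lemma~\ref{lem:analysisononeleg} and the explicit Iwasawa factorization of Example~\ref{example:factor}: by the Gr\"onwall estimate used in the proof of Theorem~\ref{thm:twistor-lines}, the parallel frame $\Psi_t$ and its Iwasawa factors $B_t,F_t$ differ from the model factors $B,F$ associated with the local model built from $A_j$ only by terms which are holomorphic in $z-p_j$ and whose integral against the singular part decays as $\varepsilon\to0$. Substituting the explicit model factorization from Example~\ref{example:factor} into $2i\tr(\Phi_{\mathrm{TL}}\wedge\Phi_{\mathrm{TL}}^{*})$ and integrating over $D_\varepsilon(p_j)$ then produces, in the limit $\varepsilon\to0$, a closed-form expression in the coefficients $a_{-1},b_{-1},c_{-1},a_0,b_0,c_0$ of $A_j$ that matches the defining piecewise formula for $E(A_j)$; the consistency of the various cases in that definition is exactly the content of \eqref{eq:detcon}. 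The sanity check \eqref{eq:Et0}, $E(\mathfrak A)=|x|^2+|y|^2$, is recovered automatically from this computation on the hyperpolygon side.

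The main obstacle is the third step: one must show that the ``bulk'' contribution, i.e.\ $\int_{\CP^1\setminus\bigcup_j D_\varepsilon(p_j)}2i\tr(\Phi_{\mathrm{TL}}\wedge\Phi_{\mathrm{TL}}^{*})$ minus the sum of local model contributions on the $D_\varepsilon(p_j)$, vanishes in the limit $\varepsilon\to0$. The natural way to handle this is to rewrite the integrand as an exact form $d\theta$ on the punctured sphere, using the self-duality equations $F_{D_h}+[\Phi_{\mathrm{TL}},\Phi_{\mathrm{TL}}^{*}]=0$ and $\bar\partial^{D_h}\Phi_{\mathrm{TL}}=0$ (equivalently, the flatness of $\omega$ for every $\lambda\in S^1$ expressed in the harmonic frame). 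Stokes' theorem then reduces the whole energy to a sum of boundary integrals on $\partial D_\varepsilon(p_j)$, each of which equals $E(A_j)$ by the second step. Carrying out this reduction cleanly—in particular, showing that $\theta$ has no spurious residues and that the subleading gauge corrections do not contaminate the boundary integrals—is the technical core of the argument and is the generalization from $n=4$ to arbitrary $n$ of the computation performed in \cite[Section~5]{HellerHellerTraizet2025}.
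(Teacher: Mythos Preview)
Your approach is genuinely different from the paper's, and considerably more laborious. The paper does \emph{not} carry out a direct localization/Stokes computation on the punctured sphere at all. Instead, for rational weight vectors $\alpha\in\mathbb Q^n$ it passes to a branched cover $\Sigma_k\to\CP^1$ of degree $k$ (totally branched over the $p_j$, with $k\alpha\in\Z^n$): the pulled-back solution is then a \emph{smooth} solution of Hitchin's equations on the compact surface $\Sigma_k$ with $k$ times the energy, and the residue formula for the energy in the smooth compact case is already available from \cite[Corollary~18]{HHT1} and \cite[Section~6.4]{HellerHellerTraizet2025}. Irrational weights are then handled by a density/continuity argument, using the real-analytic dependence of harmonic metrics on parabolic weights from \cite{KiWi}. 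This completely sidesteps the tameness/integrability analysis near the punctures that constitutes your ``third step.''

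Your strategy could in principle be made to work, but as written it is a sketch rather than a proof: you correctly identify the third step as the technical core, yet you neither produce the primitive $\theta$ nor verify that the subleading gauge corrections from $B_t$ do not contribute to the $\varepsilon\to0$ boundary limit. The paper's route purchases exactly this: by lifting to a compact smooth cover there are no punctures, no model comparison, and no boundary terms to control---the residue formula upstairs is a clean application of the compact-case result. Your route, if completed, would be more self-contained (no appeal to \cite{KiWi} or to the compact theory) and would make the local origin of each $E(A_j)$ transparent, but it requires substantially more analysis than what you have written down.
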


\begin{proof}
If the weight vector is rational,
$\alpha=(\alpha_1,\dots,\alpha_n)\in\mathbb Q^n$, let $k>0$ be the smallest positive
integer such that $k\alpha\in\Z^n$.
Consider a branched covering $\Sigma_k\to\CP^1$ of degree $k$ which is totally branched
at $p_1,\dots,p_n$.
Then the pullback of $s$ to $\Sigma_k$ gives rise to a smooth solution of Hitchin's
self--duality equations on $\Sigma_k$; see \cite{Nas} or
\cite[Section~3.2.1]{HellerHellerTraizet2025} for details.
The solution upstairs has $k$ times the energy of the solution downstairs.
As in \cite[Section~6.4]{HellerHellerTraizet2025} or \cite[Corollary~18]{HHT1}, the
energy of the smooth solution on $\Sigma_k$ can be computed in terms of residues, and
the statement follows along the same lines.

For small generic weights, we approximate solutions by solutions with rational weights.
Since the solutions depend real analytically on the initial data \cite{KiWi}, the result follows by continuity.
\end{proof}

\begin{remark}
Proposition~\ref{pro:energyhitchin} shows that on twistor lines 
the Hitchin Hamiltonian $\mathcal E$ admits a description in terms of the
residue data.
By \eqref{eq:Et0}, after rescaling by $t^{-1}$, the Hitchin energy reduces in the limit
$t\to0$ to the classical Hamiltonian of rotational $S^1$-action on the hyperpolygon space, and is there a K\"ahler potential for the complex structure $J$.
In this sense, the hyperpolygon Hamiltonian appears as the \emph{semiclassical
Hamiltonian} governing the low--energy regime of the Hitchin metric.
\end{remark}

\subsection{The Hyperk\"ahler metrics}

We now introduce the low--energy subsets and the natural identifications needed
to compare the Hitchin and hyperpolygon hyperk\"ahler metrics.

For $\alpha$ small let $\mathcal X_\alpha$ be the hyperpolygon space with weight $\alpha$, endowed with
its hyperk\"ahler metric $g^{HP}$ and energy function
\[
\mathcal E_{HP}\colon \mathcal X_\alpha\to\R_{\ge 0},
\qquad
\mathcal X_\alpha^{\le C}
:=
\{x\in\mathcal X_\alpha:\mathcal E_{HP}(x)\le C\}.
\]

Let \[
\mathcal M_{Higgs}(t\alpha)_{\le Ct}
:=
\bigl\{(\Pcal,\Phi)\in \mathcal M_{Higgs}(t\alpha)\;:\;
\mathcal E_t(\Pcal,\Phi)\le C\, t\bigr\}
\]
and let $\mathcal H_{Higgs}(t\alpha)$ denote the subset of parabolic Higgs bundles on the
trivial holomorphic bundle. 
By \cite{Meneses26} (see Remark \ref{rem:small-weight} or also \cite[Proposition 7.1]{Hitchin1987}
for the case of compact Riemann surfaces or \cite[Section 6.6]{HellerHellerTraizet2025}
for the case of the 4-punctured sphere) there exist for any small $\alpha$ and  $C>0$ some $\epsilon>0$ such that for all $0<t<\epsilon$
\begin{equation}\label{energy:holomorphic}
\mathcal M_{Higgs}(t\alpha)_{\le Ct}\subset\mathcal H_{Higgs}(t\alpha).\end{equation}
Let \[
\Psi_{\alpha}\colon
\mathcal X_\alpha
\xrightarrow{\;\cong\;}
\mathcal H_{Higgs}(\alpha)
\subset
\mathcal M_{Higgs}(\alpha)
\]
be the natural isomorphism of Godinho--Mandini; see
Theorem~\ref{thm:GM-hyperpolygon-higgs}.
For small $\alpha$, stability does not depend on scaling the weights by a positive real factor $0<t\leq1$.

For $0<t\leq1$, we define the biholomorphic map
\[
\varphi_t\colon
\mathcal H_{Higgs}(\alpha)
\xrightarrow{\;\cong\;}
\mathcal H_{Higgs}(t\alpha),
\qquad
(\mathcal P,\alpha,\Phi)\longmapsto (\mathcal P,t\alpha,t\Phi).
\]
Here $\mathcal P$ denotes the quasi-parabolic structure, i.e.\ the collection of
quasi-parabolic lines up to overall conjugation.
Moreover, for the natural holomorphic symplectic form $\Omega_\alpha$ on
$\mathcal H_{Higgs}(t\alpha)$ we have
\[
\varphi_t^*\Omega_{t\alpha}=t\,\Omega_{\alpha}.
\]

Finally, we set
\[
\Gamma_t
:=
\varphi_t\circ\Psi_\alpha
\colon
\mathcal X_\alpha
\xrightarrow{\;\cong\;}
\mathcal H_{Higgs}(t\alpha).
\]

We can now state and prove our main theorem:
\begin{theorem}\label{thm:semiclassical-hitchin-metric-low-energy-regular}
Let $\alpha$ be generic and small, and let $C>0$ be a regular value of the hyperpolygon energy
$\mathcal E_{HP}$.
Then for all sufficiently small $t>0$ the value $t C$ is a regular value of
$\mathcal E_t$, and the hypersurfaces
\[
\Gamma_t^{-1}\!\bigl(\mathcal E_t^{-1}(t C)\bigr)\subset \mathcal X_\alpha
\]
converge real analytically to $\mathcal E_{HP}^{-1}(C)$ as embedded submanifolds.
In particular, the domains
\[
\Gamma_t^{-1}\bigl(\mathcal M_{Higgs}(t\alpha)_{\le t C}\bigr)
\subset \mathcal X_\alpha
\]
converge to $\mathcal X_\alpha^{\le C}$  as domains with smooth boundary.

Moreover, on every compact subset of $\mathcal X_\alpha{\le C}$ the rescaled metrics
$t^{-1}\Gamma_t^* g^{Hit}_t$ converge real analytically to the hyperpolygon hyperk\"ahler
metric $g^{HP}$. 
\end{theorem}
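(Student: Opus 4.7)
The plan is to convert all claims into convergence statements parametrized by $\mathcal X_\alpha$ via the real-analytic family of twistor lines $s^p(t)$, $p\in\mathcal X_\alpha$, $t\in[0,\varepsilon)$, constructed in Theorem~\ref{thm:real-holomorphic-section-existence} and identified as genuine twistor lines in Theorem~\ref{thm:twistor-lines}. By construction $s^p(t)(0)=\Gamma_t(p)\in\mathcal H_{Higgs}(t\alpha)$, and all data---the residues $A_j(p,t)=t(\mathfrak A_j+P_j(p,t))$, the monodromies, and the associated hyperk\"ahler structures---depend real analytically on $(p,t)$, locally uniformly on compact subsets of $\mathcal X_\alpha$. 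The inclusion \eqref{energy:holomorphic} ensures that $\Gamma_t$ covers the relevant low-energy domain for $t$ sufficiently small.

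For the energy assertions, Proposition~\ref{pro:energyhitchin} together with the degree-one homogeneity of $E$ (visible from the definition via $\det(tA)=-(t\alpha_0)^2$ and the scaling of Laurent coefficients) yields
\[
t^{-1}\mathcal E_t(\Gamma_t(p))=\sum_{j=1}^n E\bigl(\mathfrak A_j+P_j(p,t)\bigr)\;\longrightarrow\;\sum_{j=1}^n\bigl(|x_j|^2+|y_j|^2\bigr)=\mathcal E_{HP}(p)
\]
real analytically as $t\to 0$, using $P_j(p,0)=0$ and \eqref{eq:Et0}. Real-analytic convergence of the functions implies convergence of their differentials, so any regular value $C$ of $\mathcal E_{HP}$ produces a regular value $tC$ of $\mathcal E_t$ for $t$ small, with the hypersurfaces and sublevel sets converging smoothly after pulling back by $\Gamma_t^{-1}$.

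For the metric itself, I would appeal to the HKLR recovery principle: the hyperk\"ahler metric is entirely determined by the $\mathcal O(2)$-twisted fiberwise holomorphic symplectic form $\varpi$ via \eqref{eq:varphilambda}, and for Fuchsian $\lambda$-connections the fiber form is given by the residue formula \eqref{eq:Goldman-Kirilov}. Substituting the twistor-line residues $\lambda t(\mathfrak A_j+P_j(p,t))$ with determinant $-(\lambda t\alpha_j)^2$, and noting that tangent vectors to $\mathcal X_\alpha$ at $p$ are carried by $d\Gamma_t$ to residue variations of the form $\lambda t\,\delta(\mathfrak A_j+P_j(p,t))$, power counting in $t$ gives
\[
t^{-1}\,\Gamma_t^*\varpi^{t}_\lambda\;\longrightarrow\;\frac{\lambda}{16}\sum_{j=1}^n\frac{1}{\alpha_j^2}\,\tr\bigl(\mathfrak A_j\,[\delta_X\mathfrak A_j,\delta_Y\mathfrak A_j]\bigr)
\]
real analytically in $\lambda\in\C^*$ and uniformly on compact subsets of $\mathcal X_\alpha^{\le C}$. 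The right-hand side is precisely the twisted holomorphic symplectic form on the hyperpolygon space as described in Theorem~\ref{thm:twistor-hyperpolygon-orbits}; applying \eqref{eq:varphilambda} to both sides then yields the claimed real-analytic convergence $t^{-1}\Gamma_t^* g^{Hit}_t\to g^{HP}$.

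The main technical obstacle I foresee is the simultaneous bookkeeping of the three scalings---parabolic weight $\alpha\mapsto t\alpha$, Higgs field $\Phi\mapsto t\Phi$ via $\varphi_t$, and metric by $t^{-1}$---and the verification that the real-analytic $(p,t)$-dependence of the twistor lines survives the $p$-differentiation required to compute $d\Gamma_t$ on the relevant tangent directions. Once these are under control the argument reduces, essentially, to residue calculus and the identification of hyperpolygon twistor data provided by Theorem~\ref{thm:twistor-hyperpolygon-orbits}.
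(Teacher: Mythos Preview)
Your proposal is correct and follows essentially the same route as the paper's proof: the energy claims are obtained from Proposition~\ref{pro:energyhitchin} together with~\eqref{eq:Et0} and the real-analytic dependence of the twistor lines established in Theorems~\ref{thm:real-holomorphic-section-existence} and~\ref{thm:twistor-lines}, while the metric convergence comes from the HKLR recovery~\eqref{eq:varphilambda}, the residue formula~\eqref{eq:Goldman-Kirilov}, and the identification of hyperpolygon twistor data in Theorem~\ref{thm:twistor-hyperpolygon-orbits}. Your explicit homogeneity and power-counting arguments simply make visible what the paper compresses into a few sentences; the scaling bookkeeping you flag is indeed the only nontrivial verification, and it is handled exactly as you outline.
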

In particular, the convergence in Theorem \ref{thm:semiclassical-hitchin-metric-low-energy-regular} implies
$C^\infty$ Cheeger--Gromov convergence \cite{ChGr} on  shrinking compact sets.
\begin{proof}
The first claim follows from the existence of twistor lines for $t>0$
(Theorem~\ref{thm:twistor-lines}) together with their limiting behaviour as $t\to0$
(Theorem~\ref{thm:real-holomorphic-section-existence}), and the energy identity
\eqref{eq:Et0} combined with Proposition~\ref{pro:energyhitchin}.
The real--analytic convergence follows from the real--analytic dependence
of the twistor lines and of the twisted-holomorphic symplectic form on the parameter $t$.

The second claim follows similarly, using Theorem~\ref{thm:twistor-hyperpolygon-orbits}
and \eqref{eq:varphilambda} together with
\eqref{eq:Goldman-Kirilov} 
as well as Theorems~\ref{thm:real-holomorphic-section-existence}
and~\ref{thm:twistor-lines}.
\end{proof}

\subsection{Higher order expansion of the Hitchin metric}

Completely analogous to \cite{HellerHellerTraizet2025}, one can compute the Taylor expansion of the
hyperk\"ahler metric coefficients in the deformation parameter $t$ at $t=0.$ Indeed,
expanding the associated family of flat $\lambda$--connections with respect to $t$
yields an iterative system determining the coefficients of the connection $1$--forms.
At each order, these coefficients solve a finite--dimensional linear problem.
The dimension of the linear system for the $k$-th order terms grows linearly in $k$.
Both the linear system and its solutions are expressed explicitly in terms of the
initial hyperpolygon/Higgs data $\underline\xi$ and iterated integrals of  logarithmic
$1$--forms on the $n$--punctured sphere. Equivalently, the Taylor coefficients of the
twisted holomorphic symplectic form---and hence of the hyperk\"ahler metric---can be
written in terms of multiple polylogarithms of increasing depth and weight. The details
are as in \cite[Section~7]{HellerHellerTraizet2025} for
the case of the four--punctured sphere with equal weights.

\begin{theorem}[Higher order expansion of the Hitchin metric]
Let $\alpha$ be small and generic.
For every $C>0$
there exists
$\varepsilon>0$ such that  the Hitchin hyperk\"ahler
metric $g_t$ on 
$\mathcal M_{Higgs}(t\alpha)_{\le t C}$
admits a convergent expansion
\[
g_t \;=\; t\left(g^{HP} \;+\; \sum_{k\geq 1} t^k\, g^{(k)}\right)
\qquad \text{for } 0<t<\varepsilon,
\]
where $g^{HP}$ is the hyperk\"ahler metric on the hyperpolygon space $\mathcal X_\alpha$.

Moreover, for each $k\geq 1$, the coefficient $g^{(k)}$ 
can be written explicitly in terms of iterated integrals of logarithmic
$1$--forms on the $n$--punctured sphere and the coefficients of $\underline\xi\in \mathcal X_\alpha$, i.e. the coefficients of $g^{(k)}$
are given by explicit combinations of multiple polylogarithms of depth and
weight at most $k+1$.
\end{theorem}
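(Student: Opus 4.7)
The plan is to promote the real-analytic family of twistor lines produced by Theorem~\ref{thm:real-holomorphic-section-existence} to a Taylor expansion in $t$, solve for its coefficients inductively via the monodromy problem of Section~\ref{sec:construction}, and then read off the expansion of the rescaled hyperk\"ahler metric through the twisted holomorphic symplectic form \eqref{eq:varphilambda}. The expansion is valid uniformly on $\mathcal M_{Higgs}(t\alpha)_{\le tC}$ because Theorem~\ref{thm:semiclassical-hitchin-metric-low-energy-regular} identifies this set, via $\Gamma_t$, with a neighbourhood of $\mathcal X_\alpha^{\le C}$ on which the implicit function theorem behind Lemma~\ref{lem:monodromy-ift} applies with a uniform radius of convergence. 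In particular, the convergence of the series is not a separate analytic issue: it is built into the real-analytic dependence of the twistor lines on the parameter $t$, and the only genuine content is the structural description of the coefficients.

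First, expand the correction term of the twistor line as $\hat\xi(t)=\sum_{k\ge 1} t^k\,\hat\xi^{(k)}$ with $\hat\xi^{(k)}=\sum_j P_j^{(k)}\frac{dz}{z-p_j}$, and substitute into the three conditions (i), (iia), (iii) of the modified monodromy problem. At each order $k\ge 1$ both the unitarity condition (i) and the residue condition (iii) linearize to the bijective operator \eqref{eq:linIFT} applied to $(P_1^{(k)},\dots,P_n^{(k)})$, so that $P_j^{(k)}$ is uniquely determined by a linear system whose inhomogeneous term is a polynomial in $(\mathfrak A_1,\dots,\mathfrak A_n,P_\bullet^{(1)},\dots,P_\bullet^{(k-1)})$. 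These inhomogeneous terms arise from expanding, in $t$, the path-ordered exponential $M_j(t)=\mathcal P\!\exp\!\bigl(-\!\int_{\gamma_j}\xi(t)\bigr)$ and the residue determinant $\det\bigl(\tfrac{1}{t}\Res_{p_j}\xi(t)\bigr)=-\alpha_j^2$; both produce multilinear combinations of lower-order coefficients weighted by iterated path-ordered integrals of the logarithmic one-forms $\frac{dz}{z-p_i}$ along the generators $\gamma_j$ of $\pi_1(\CP^1\setminus\{p_1,\dots,p_n\},q)$.

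The structural point is that iterated integrals of these one-forms along paths in the $n$-punctured sphere are, up to explicit base-point regularization, honest multiple polylogarithms, with an $\ell$-fold iteration producing a polylogarithm of weight $\ell$. Since the inverse of the operator \eqref{eq:linIFT} is algebraic in the $\mathfrak A_j$ and does not raise the polylogarithmic weight, a simple induction on $k$ shows that each $P_j^{(k)}$ is a polynomial in $\mathfrak A_1,\dots,\mathfrak A_n$ whose coefficients are multiple polylogarithms of weight and depth at most $k$. This recursion is precisely the higher-$n$ analogue of the one carried out in detail in \cite[Section~7]{HellerHellerTraizet2025} for the four-punctured sphere with equal weights.

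Finally, to translate this into the expansion of $g_t$ itself I would use the residue formula \eqref{eq:Goldman-Kirilov} for the twisted holomorphic symplectic form on Fuchsian $\lambda$-connections, applied to the residues $A_j(t)=t\mathfrak A_j+\sum_{k\ge 1}t^{k+1}P_j^{(k)}$, and extract coefficients in $t$ and $\lambda$ via \eqref{eq:varphilambda}. The leading term reproduces the hyperpolygon metric $g^{HP}$ by Theorem~\ref{thm:twistor-hyperpolygon-orbits} and Theorem~\ref{thm:semiclassical-hitchin-metric-low-energy-regular}, while the $k$-th correction $g^{(k)}$ inherits the polylogarithmic structure from $P_j^{(1)},\dots,P_j^{(k)}$; the extra $1/\tr(A_j^2)$ factor in \eqref{eq:Goldman-Kirilov}, expanded in $t$, contributes at most one additional weight unit, giving the stated bound $k+1$. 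The main obstacle is the combinatorial bookkeeping at each step: one must verify that neither the inverse of \eqref{eq:linIFT} nor the extraction of the metric from residues ever raises the polylogarithmic weight beyond the claimed bound, and that the iterated integrals that appear can indeed be identified with standard multiple polylogarithms on $\CP^1\setminus\{p_1,\dots,p_n\}$ rather than with mere formal iterated integrals.
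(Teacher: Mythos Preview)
Your proposal is correct and follows essentially the same approach as the paper: expand the twistor-line correction $\hat\xi(t)$ in powers of $t$, solve the modified monodromy problem order by order (where the linearization is always the operator \eqref{eq:linIFT} and the inhomogeneous terms arise from iterated integrals of the logarithmic forms $\tfrac{dz}{z-p_j}$), and then extract the metric coefficients via the residue formula \eqref{eq:Goldman-Kirilov} together with \eqref{eq:varphilambda}. The paper's own argument is in fact only a sketch deferring all details to \cite[Section~7]{HellerHellerTraizet2025}, so your outline is, if anything, more explicit than what the paper provides; the one point where your phrasing differs slightly is that the paper emphasizes that at order $k$ the relevant linear system is \emph{finite-dimensional} of size growing linearly in $k$ (reflecting the bounded $\lambda$-degree of $P_j^{(k)}$), whereas you invoke the full Banach-space isomorphism \eqref{eq:linIFT}---but these are compatible, since the iteration stays inside finite $\lambda$-polynomial subspaces on which \eqref{eq:linIFT} restricts to an isomorphism. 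One small correction: your justification for the extra weight unit via the $1/\tr(A_j^2)$ factor is not quite right, since by condition~(iii) one has $\tr(A_j(t)^2)=2t^2\alpha_j^2$, which carries no polylogarithmic content; the additional weight instead comes from the fact that the tangent vectors $X_j,Y_j$ entering \eqref{eq:Goldman-Kirilov} are variations of the $P_j^{(k)}$ with respect to the moduli, and differentiating the iterated-integral expressions in $\underline\xi$ can raise the weight by one.
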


In particular, this provides an effective algorithm for computing arbitrarily high
order corrections to the hyperk\"ahler metric in the small--weight regime.


\appendix
\section{Loop algebra lemmas}\label{appendix}
This appendix collects the loop--algebraic linear results needed to solve the
monodromy problem in Section~\ref{sec:construction}.

Let $\mathfrak X\in\Lambda\mathfrak U$ be as in \eqref{eq:mfx}, and define
\begin{equation*}
\begin{split}
\mathcal K_{\mathfrak X}&
:=
\{\,C\in \Lambda\mathfrak{sl}(2,\C)\mid \tr(\mathfrak XC)=0\,\},\;\quad
\mathcal K^{\geq0}_{\mathfrak X}
:=
\mathcal K_{\mathfrak X}\cap\Lambda^{\geq0}\mathfrak{sl}(2,\C),\\
\mathring{\mathcal K}^{\geq0}_X&:=\{ C=\sum_{k\geq0}C^{(k)}\lambda^k\in\mathcal K^{\geq0}_{\mathfrak X}\mid
C^{(0)}\begin{pmatrix}1\\0\end{pmatrix}\subset \C \begin{pmatrix}1\\0\end{pmatrix}\}.
\end{split}
\end{equation*}
$\mathcal K_X$, $\mathcal K^{\geq0}_{\mathfrak X}$ and $\mathring{\mathcal K}^{\geq0}_X$ are closed subspaces of $\Lambda\mathfrak{sl}(2,\C)$.

\begin{proposition}\label{prop:solutionatpj}
Let  $\pi^\mathfrak H$ be the orthogonal projection onto $\mathfrak H\subset \Lambda\mathfrak{sl}(2,\C)$.
Then,
\[\pi^\mathfrak H\colon \mathcal K^{\geq0}_{\mathfrak X}\to \Lambda\mathfrak H\cap \mathcal K_{\mathfrak X}\]
is an  isomorphism between real Banach spaces.
\end{proposition}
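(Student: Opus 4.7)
The plan is to verify separately that $\pi^{\mathfrak H}$ is well-defined on $\mathcal K^{\geq 0}_{\mathfrak X}$ with image in $\Lambda\mathfrak H\cap\mathcal K_{\mathfrak X}$, is injective, and is surjective; boundedness of the inverse will then follow either from the explicit formulas or from the open mapping theorem applied to these closed subspaces of $\Lambda\mathfrak{sl}(2,\C)$. Well-definedness is immediate: $\pi^{\mathfrak H}(C) = \tfrac{1}{2}(C - C^*) \in \Lambda\mathfrak H$ by construction, while the identity $\tr(A^* B^*)(\lambda) = \overline{\tr(AB)(-\bar\lambda^{-1})}$, combined with $\mathfrak X^* = \mathfrak X$, yields $\tr(\mathfrak X C^*) = \overline{\tr(\mathfrak X C)(-\bar\lambda^{-1})}$, which vanishes whenever $\tr(\mathfrak X C) = 0$.

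For injectivity, suppose $\pi^{\mathfrak H}(C) = 0$; then $C = C^* \in \Lambda^{\geq 0}\mathfrak{sl}(2,\C) \cap \Lambda\mathfrak U$. Since $C$ extends holomorphically to $|\lambda|\le1$ and $C^*$ extends holomorphically to $|\lambda|\ge 1$ (including $\lambda=\infty$), the equality $C = C^*$ on $S^1$ produces a holomorphic $\mathfrak{sl}(2,\C)$-valued function on $\CP^1$, hence a constant $C_0 \in \mathfrak{su}(2)$. Writing $C_0 = \bigl(\begin{smallmatrix} iu & v \\ -\bar v & -iu \end{smallmatrix}\bigr)$, the $\lambda^0$-coefficient of $\tr(\mathfrak X C_0) = 0$ reads $-2u\sqrt{\alpha^2+|\beta|^2} = 0$ and forces $u = 0$, while the $\lambda^{-1}$-coefficient reads $\beta\bar v = 0$: if $\beta \neq 0$ this yields $v = 0$, and if $\beta = 0$ the same conclusion follows from the supplementary condition $(C_0)_{21} = 0$ encoded in $\mathring{\mathcal K}^{\geq 0}_{\mathfrak X}$ (which agrees with $\mathcal K^{\geq 0}_{\mathfrak X}$ precisely when $\beta \neq 0$, since then the $\lambda^{-1}$-trace condition already imposes $(C_0)_{21} = 0$). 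In either case $C_0 = 0$.

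Surjectivity is the delicate step and where most of the work lies. Given $Y = \sum_k Y_k \lambda^k \in \Lambda\mathfrak H \cap \mathcal K_{\mathfrak X}$, I construct $C = \sum_{k\geq 0} C_k \lambda^k$ by declaring $C_k := 2 Y_k$ for every $k \geq 1$ and pinning down $C_0 \in \mathfrak{sl}(2,\C)$ by three conditions: (i) $C_0 + \overline{C_0}^T = 2 Y_0$ (the $\lambda^0$-mode of $\pi^{\mathfrak H}(C) = Y$); (ii) $(C_0)_{21} = 0$ (the $\lambda^{-1}$-vanishing of $\tr(\mathfrak X C)$ when $\beta \neq 0$, and membership in $\mathring{\mathcal K}^{\geq 0}$ in general); and (iii) $\tr(X_0 C_0) = -2\tr(X_{-1} Y_1)$ (the $\lambda^0$-vanishing of $\tr(\mathfrak X C)$). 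Conditions (i) and (ii) determine $C_0$ up to a single real diagonal parameter $u$; (iii) then fixes $u$ by the equation $-2Nu + 2iNa = -2\tr(X_{-1}Y_1)$ with $N = \sqrt{\alpha^2+|\beta|^2}$ and $a = (Y_0)_{11}\in\R$. The hard part will be checking that the resulting $u$ is real: its imaginary part equals $a + \mathrm{Im}\,\tr(X_{-1} Y_1)/N$, which vanishes precisely by the $\lambda^0$-coefficient of $\tr(\mathfrak X Y) = 0$ (an equation that, because $\mathfrak X\in\Lambda\mathfrak U$ and $Y\in\Lambda\mathfrak H$, lives in pure imaginary numbers and carries a single real content). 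The vanishing of $\tr(\mathfrak X C)|_{\lambda^m}$ for $m \geq 2$ follows immediately from $C_k = 2Y_k$ and $\tr(\mathfrak X Y) = 0$; the borderline case $m = 1$ reduces to the identity $\tr(X_1 S) = \tr(X_1 Y_0)$ with $S = C_0 - Y_0 \in \mathfrak{su}(2)$, which follows automatically from (ii) together with the Hermitian symmetry of $S$. Since the formula expressing $C$ in terms of $Y$ is linear and involves only finitely many Fourier modes with coefficients depending continuously on $\beta$, one obtains the bound $\|C\|\leq \mathrm{const}(\beta)\|Y\|$, completing the proof.
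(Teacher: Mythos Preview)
Your argument is correct. Well-definedness and injectivity follow the same line as the paper (the paper's proof is terser but identical in content: the kernel is $\Lambda\mathfrak U\cap\Lambda^{\ge0}\mathfrak{sl}(2,\C)=\mathfrak{su}(2)$, which meets $\mathcal K_{\mathfrak X}$ trivially for $\beta\neq0$ and meets $\mathring{\mathcal K}^{\ge0}_{\mathfrak X}$ trivially for $\beta=0$). For surjectivity, however, you take a genuinely different route. You construct $C$ directly in Fourier modes, setting $C_k=2Y_k$ for $k\ge1$ and then solving a small linear system for $C_0$ subject to your conditions (i)--(iii), with the single consistency constraint (reality of $u$) supplied by the $\lambda^0$-coefficient of $\tr(\mathfrak X Y)=0$. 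The paper instead passes through a commutator representation: it first shows (Lemma~\ref{lem:YhatY}) that every element of $\mathcal K_{\mathfrak X}$ is of the form $[\mathfrak X,\hat H]$, via the conjugation $\mathcal D\colon Y\mapsto D^{-1}YD$ with $D=\mathrm{diag}(1,\lambda)$ which turns $\mathfrak X$ into a constant loop, and then (Lemma~\ref{lem:existPp}) builds $P=[\mathfrak X,\hat P]$ with $\hat P=2\hat H^{+}+\hat P^{0}$ for an explicit constant correction $\hat P^{0}$. Your approach is more elementary and entirely self-contained; the paper's commutator framework makes the adjoint-orbit structure visible and would generalize more transparently to other $\mathfrak X$ or higher rank. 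One terminological slip: in the $m=1$ step you invoke ``Hermitian symmetry of $S$'', but $S=C_0-Y_0$ is skew-Hermitian (it lies in $\mathfrak{su}(2)$, as you correctly state one line earlier); the computation $S_{12}=-\overline{S_{21}}=\overline{(Y_0)_{21}}=(Y_0)_{12}$ is unaffected.
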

\begin{proof}
Since the projection $\pi^\mathfrak H$ is continuous, we only need to show bijectivity.
Note that
\[
\Lambda\mathfrak U\cap \Lambda^{\geq0}\mathfrak{sl}(2,\C)=\mathfrak{su}(2)
\]
Thus, for $\beta\neq0$, injectivity follows from
$
\mathfrak{su}(2)\cap \mathcal K_{\mathfrak X}=\{0\}.
$
If $\beta=0$, 
\[\mathfrak{su}(2)\cap \mathcal K_{\mathfrak X}=\{\begin{pmatrix}0&\gamma\\-\bar\gamma& 0\end{pmatrix}\mid \gamma\in\C\}\]
and therefore
$\Lambda\mathfrak U\cap \mathring{\mathcal K}_{\mathfrak X}=\{0\}$ implies injectivity.

The proof of surjectivity is given in 
Lemma~\ref{lem:existPp} 
 below.
\end{proof}

We first establish some
elementary lemmas.

\begin{lemma}\label{lem:YhatY}
For every $Y\in\mathcal K_{\mathfrak X}$ there exists
$\hat Y\in \Lambda\mathfrak{sl}(2,\C)$ such that
\[
Y=[\mathfrak X,\hat Y].
\]
Moreover, $\hat Y$ is unique up to adding $f\,\mathfrak X$, where
$f\colon S^1\to\C$ is real analytic.
The map $Y\mapsto \hat Y$ can be chosen to be linear and continuous.
\end{lemma}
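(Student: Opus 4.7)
The plan is to exploit the very specific structure of $\mathfrak X$ in \eqref{eq:mfx}: its $\lambda^{0}$-component is the nonzero semisimple element $\mu H$ with $\mu$ \emph{constant} in $\lambda$, while its $\lambda^{\pm 1}$-components are nilpotent. This allows me to write down an explicit right inverse of $\operatorname{ad}_{\mathfrak X}$ rather than having to invert a $\lambda$-dependent operator pointwise and then verify its regularity in the $\ell^1$-norm.

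Concretely, I would work in the Chevalley basis $H=\operatorname{diag}(1,-1)$, $E=\bigl(\begin{smallmatrix}0&1\\0&0\end{smallmatrix}\bigr)$, $F=\bigl(\begin{smallmatrix}0&0\\1&0\end{smallmatrix}\bigr)$ and decompose $Y=aH+bE+cF$ and $\hat Y=\hat aH+\hat bE+\hat cF$ with scalar loops $a,b,c,\hat a,\hat b,\hat c$. Using the bracket relations $[H,E]=2E$, $[H,F]=-2F$, $[E,F]=H$ together with $\mathfrak X=\mu H+\lambda^{-1}\beta E+\lambda\bar\beta F$, the commutator $[\mathfrak X,\hat Y]$ and the trace pairing
\[
\operatorname{tr}(\mathfrak X Y)=2\mu a+\lambda^{-1}\beta c+\lambda\bar\beta b
\]
both become explicit scalar expressions. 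Making the normalizing ansatz $\hat a=0$, the $E$- and $F$-components of $[\mathfrak X,\hat Y]=Y$ immediately force
\[
\hat b=\frac{b}{2\mu},\qquad \hat c=-\frac{c}{2\mu},
\]
and the $H$-component of $[\mathfrak X,\hat Y]$ then reduces to $a=-(\lambda^{-1}\beta c+\lambda\bar\beta b)/(2\mu)$, which is automatically satisfied precisely because $Y\in\mathcal K_{\mathfrak X}$. Hence the explicit linear map $Y\mapsto \hat Y=(bE-cF)/(2\mu)$ solves the equation, and since $\mu\ne0$ is a constant, it is trivially bounded with respect to the $\ell^1$-norm on $\Lambda\mathfrak{sl}(2,\mathbb C)$.

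For uniqueness, I would observe that $\operatorname{tr}\mathfrak X=0$ and $\det\mathfrak X=\alpha^2>0$ imply that $\mathfrak X(\lambda)$ has eigenvalues $\pm i\alpha$ independent of $\lambda$, so pointwise $\ker\operatorname{ad}_{\mathfrak X(\lambda)}=\mathbb C\cdot\mathfrak X(\lambda)$. Given any $Z\in\Lambda\mathfrak{sl}(2,\mathbb C)$ with $[\mathfrak X,Z]=0$, the scalar function $f$ in $Z=f\mathfrak X$ is recovered from the trace pairing as $f=-\operatorname{tr}(Z\mathfrak X)/(2\alpha^2)$, which is a scalar loop of the same regularity as $Z$. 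This matches exactly the claimed freedom.

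The only place where a genuine obstacle could occur is the continuity step: in general, a $\lambda$-wise inversion of $\operatorname{ad}_{\mathfrak X(\lambda)}$ would require controlling a $\lambda$-dependent inverse in the Wiener algebra. Here, however, the $\lambda$-independence of $\mu$ collapses this to scalar multiplication by the constant $1/(2\mu)$, and no further analytic work is needed.
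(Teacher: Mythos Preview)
Your argument is correct. Both existence and uniqueness go through as you describe, and your explicit right inverse $Y\mapsto (bE-cF)/(2\mu)$ is manifestly $\ell^1$-bounded since $\mu$ is a nonzero constant.

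The paper's proof reaches the same conclusion by a closely related but slightly different device: rather than working directly in the Chevalley basis, it introduces the conjugation operator $\mathcal D\colon Y\mapsto D^{-1}YD$ with $D=\operatorname{diag}(1,\lambda)$, which is a Lie algebra automorphism of $\Lambda\mathfrak{sl}(2,\C)$ and renders $\mathcal D\mathfrak X$ \emph{entirely} $\lambda$-independent. The equation $Y=[\mathfrak X,\hat Y]$ then becomes $\mathcal DY=[\mathcal D\mathfrak X,\mathcal D\hat Y]$, which can be solved Fourier-coefficient by Fourier-coefficient as a constant $\mathfrak{sl}(2,\C)$ problem. Your route instead exploits that only the $H$-coefficient $\mu$ of $\mathfrak X$ needs to be constant: with the normalizing ansatz $\hat a=0$ the off-diagonal equations decouple and reduce to scalar division by $2\mu$, while the diagonal equation is forced by the kernel condition. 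Your version is more explicit and pins down a distinguished section of the inverse (the one with vanishing $H$-component), whereas the paper's conjugation trick is more conceptual and would adapt more readily to other situations where a suitable conjugation trivialises the $\lambda$-dependence. Both rest on the same structural observation about $\mathfrak X$.
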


\begin{proof}
Uniqueness up to adding $f\,\mathfrak X$ is immediate, since $\mathfrak X$ is
non--vanishing on $S^1$, and the corresponding statement holds in
$\mathfrak{sl}(2,\C)$.
For existence, consider the operator
\begin{equation}\label{def:Dcim}
\mathcal D\colon
\Lambda\mathfrak{sl}(2,\C)\to \Lambda\mathfrak{sl}(2,\C),
\qquad
Y\longmapsto D^{-1}YD,
\end{equation}
where $D=\mathrm{diag}(1,\lambda)$.
Clearly, $\mathcal D(YZ)=(\mathcal D Y)(\mathcal D Z)$ and
$\mathcal D[Y,Z]=[\mathcal D Y,\mathcal D Z]$.
Furthermore, $\mathcal D\mathfrak X\in\mathfrak{sl}(2,\C)$ is independent of $\lambda$ by construction.
Writing $\mathcal D Y=\sum_k c_k\lambda^k$, we 
solve for each coefficient $c_k=[\mathcal D\mathfrak X, \hat c_k]$, to obtain the result.
\end{proof}

\begin{lemma}\label{lem.red}
Let $\mathfrak X\in\Lambda\mathfrak U$ be as above, and let
$Y=[\mathfrak X,\hat Y]\in\mathcal K_\mathfrak X$.
Then, the respective unitary and hermitian parts satisfy
\[
Y^u=[\mathfrak X,\hat Y^u]
\quad\text{and}\quad
Y^h=[\mathfrak X,\hat Y^h].
\]
\end{lemma}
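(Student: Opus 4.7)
The plan is to derive the lemma directly from the compatibility of the anti-linear involution $*$ with the Lie bracket (equation~\eqref{eq:real_invol}) and from the fact that $\mathfrak X\in\Lambda\mathfrak U$ is $*$-fixed. The decomposition $Y=Y^u+Y^h$ is, by definition, the $(\pm1)$-eigenspace decomposition with respect to $*$, so it should suffice to apply $*$ to the given identity $Y=[\mathfrak X,\hat Y]$ and then take the symmetric/antisymmetric combinations.

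More concretely, I would proceed as follows. First apply $*$ to $Y=[\mathfrak X,\hat Y]$. Using \eqref{eq:real_invol} and $\mathfrak X^*=\mathfrak X$, we obtain
\[
Y^*=[\mathfrak X,\hat Y]^*=[\mathfrak X^*,\hat Y^*]=[\mathfrak X,\hat Y^*].
\]
Next, since the bracket is $\C$-bilinear, taking the half-sum and half-difference of the two identities $Y=[\mathfrak X,\hat Y]$ and $Y^*=[\mathfrak X,\hat Y^*]$ yields
\[
Y^u=\tfrac12(Y+Y^*)=\bigl[\mathfrak X,\tfrac12(\hat Y+\hat Y^*)\bigr]=[\mathfrak X,\hat Y^u],
\]
\[
Y^h=\tfrac12(Y-Y^*)=\bigl[\mathfrak X,\tfrac12(\hat Y-\hat Y^*)\bigr]=[\mathfrak X,\hat Y^h],
\]
where the last equalities use the definition \eqref{defuhdeco} of the unitary and Hermitian parts of $\hat Y$.

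There is essentially no obstacle here: the whole content of the statement is the compatibility of $*$ with brackets, which is already recorded, together with the fact that $\mathfrak X$ is fixed by $*$. The only thing worth double-checking is that the particular $\hat Y$ produced by Lemma~\ref{lem:YhatY} need not be $*$-equivariant itself; the lemma does not claim $\hat Y^u$ is the preimage constructed there, only that it is \emph{some} preimage of $Y^u$ under $[\mathfrak X,\cdot]$, which is exactly what is needed for the surjectivity argument in the proof of Proposition~\ref{prop:solutionatpj}.
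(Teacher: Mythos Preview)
Your proof is correct and is exactly the argument the paper intends: its own proof is the single sentence ``This is a direct consequence of \eqref{eq:real_invol} and $\mathfrak X^*=\mathfrak X$,'' and you have simply spelled out that consequence.
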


\begin{proof}
This is a direct consequence of \eqref{eq:real_invol} and $\mathfrak X^*=\mathfrak X$.
\end{proof}

\begin{lemma}\label{lem:existPp}
Let $\mathfrak X\in\Lambda\mathfrak U$ be as above, and let
$H
\in\mathcal K_\mathfrak X\cap\Lambda\mathfrak H$. 
Then there exists
\[
 P\in \mathring{\mathcal K}_\mathfrak X^{\geq0}
\]
such that
\[\pi^\mathfrak HP=H.\]
\end{lemma}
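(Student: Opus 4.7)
The plan is to recast the two conditions $\pi^\mathfrak H P=H$ and $P\in\mathring{\mathcal K}^{\geq 0}_{\mathfrak X}$ as a finite-dimensional linear system for the Fourier coefficients of $P$, and then solve it by a direct parameter count. Writing $P=\sum_{k\geq 0}P_k\lambda^k$, I observe that $P^*$ has only non-positive Fourier modes, so the equation $P-P^*=2H$ separates mode-by-mode: at each $k\geq 1$ it forces $P_k=2H_k$; at $k=0$ it gives $P_0+\overline{P_0}^{T}=2H_0$, equivalent to $P_0=H_0+U_0$ with $U_0\in\mathfrak{su}(2)$; and at strictly negative modes it is automatic from $H^*=-H$. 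Hence $P$ is determined by $H$ up to the three real parameters of $U_0$.

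Substituting this ansatz into $\tr(\mathfrak X P)$ and exploiting $\tr(\mathfrak X H)=0$ mode-by-mode, every Fourier coefficient of $\tr(\mathfrak X P)$ with $|k|\geq 2$ cancels automatically, and only three scalar residual conditions at $\lambda^{-1},\lambda^0,\lambda^1$ remain. Using the $*$-symmetries $\mathfrak X^*=\mathfrak X$, $H^*=-H$, and $U_0+\overline{U_0}^{T}=0$, I would then check that the $\lambda^{-1}$ and $\lambda^1$ conditions are complex conjugates of one another, so together they constitute a single complex condition. The $\lambda^0$ equation reads $\tr(\mathfrak X_0 U_0)=-2\sqrt{\alpha^2+|\beta|^2}\,u$ (where $u$ denotes the diagonal parameter of $U_0$) on the left, and its right-hand side, obtained from $-2\tr(\mathfrak X_{-1}H_1)-\tr(\mathfrak X_0 H_0)$, is itself real thanks to the degree-zero identity $\tr(\mathfrak X_{-1}H_1)+\tr(\mathfrak X_0 H_0)+\tr(\mathfrak X_1 H_{-1})=0$ coming from $H\in\mathcal K_{\mathfrak X}$.

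It remains to solve the resulting system. A short computation gives $\tr(\mathfrak X_{-1}P_0)=\beta\,(P_0)_{21}$, so for $\beta\neq 0$ the $\lambda^{-1}$ equation is exactly the line-preservation $(P_0)_{21}=0$ encoded in $\mathring{\mathcal K}^{\geq 0}_{\mathfrak X}$; it consumes two of the three real parameters of $U_0$, and the $\lambda^0$ equation then uniquely determines the remaining diagonal parameter. In the degenerate case $\beta=0$, the $\lambda^{\pm 1}$ equations are trivial and the line-preservation is imposed directly as two real constraints, while the hypothesis $\tr(\mathfrak X H)=0$ collapses to force $(H_0)_{11}=0$, making the $\lambda^0$ equation consistent and fixing the last parameter. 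The main subtlety is the balance of the parameter count and the reality of the $\lambda^0$ equation; both rest on the single scalar identity $\tr(\mathfrak X H)_0=0$ from the hypothesis $H\in\mathcal K_{\mathfrak X}$. The resulting map $H\mapsto P$ is given by explicit finite linear formulas and is therefore continuous in the relevant Banach norms, completing the proof.
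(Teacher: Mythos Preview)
Your approach is correct and takes a genuinely different route from the paper. The paper proceeds via the commutator representation: using the auxiliary Lemmas~\ref{lem:YhatY} and~\ref{lem.red} it writes $H=[\mathfrak X,\hat H]$ with $\hat H\in\Lambda\mathfrak H$, decomposes $\hat H=\hat H^-+\hat H^0+\hat H^+$, and then builds an explicit $\hat P=\hat P^0+\hat P^+$ (with $\hat P^+=2\hat H^+$ and a hand-crafted $\hat P^0$ involving a division by $\mu$) so that $P:=[\mathfrak X,\hat P]$ lies in $\Lambda^{\geq0}$ and has Hermitian part $H$. In that scheme the condition $\tr(\mathfrak X P)=0$ is automatic because $P$ is a commutator with $\mathfrak X$, and the line-preservation is checked at the end.

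Your mode-by-mode argument bypasses the commutator machinery: you first solve $\pi^{\mathfrak H}P=H$ coefficient-wise, reducing to a three-real-parameter $\mathfrak{su}(2)$ freedom in $P_0$, and then use the conjugate-pairing of the $\lambda^{\pm1}$ trace conditions together with the reality of the $\lambda^0$ condition to match parameters exactly. This is more elementary and self-contained (no preliminary lemmas), and it makes the uniqueness and continuity of $H\mapsto P$ transparent---in the paper's argument $\hat H$ is only determined modulo $f\,\mathfrak X$, so the inverse map is less visible. Conversely, the commutator approach is more structural: it absorbs the constraint $\tr(\mathfrak X P)=0$ automatically and would adapt more readily to other normal forms for $\mathfrak X$, whereas your argument relies on the specific shape of $\mathfrak X_{-1},\mathfrak X_0,\mathfrak X_1$.
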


\begin{proof}
By Lemma \ref{lem:YhatY} and Lemma \ref{lem.red} we can 
write $H=[X,\hat H]$ for some $\hat H\in\Lambda\mathfrak H.$
Decompose
$\hat H=\hat H^-+\hat H^0+\hat H^+$.

Then $(\hat H^0)^*=-\hat H^0$, i.e.\ $\hat H^0\in i\,\mathfrak{su}(2)$, and
$\hat H^+=-(\hat H^-)^*$.
Define $\hat P^+:=\hat 2H^+$.

Then 
\[
[\mathfrak X,\hat P^+]\in\Lambda^{\geq0}\mathfrak{sl}(2,\C)
\]
and
\[\pi^\mathfrak H[\mathfrak X,\hat P^+]=[\mathfrak X,\pi^\mathfrak H\hat P^+]=[\mathfrak X,\hat H^+- (\hat H^+)^*]=[\mathfrak X,\hat H^+ +\hat H^-].\]

Let
\[
\hat H^0=
\begin{pmatrix}
a & b \\
\bar b & -a
\end{pmatrix},
\]
which is Hermitian symmetric by assumption, so in particular $a\in\R$.
Define
\[
\hat P^0
:=
\begin{pmatrix}
0 & 2b \\
-2\frac{a\bar\beta}{\mu}\lambda & 0
\end{pmatrix}.
\]
Then (by computation)
\[
[\mathfrak X,\hat P^0]\in \Lambda^{\geq0}\mathfrak{sl}(2,\C)
\]
and
\[\pi^\mathfrak H [\mathfrak X,\hat P^0]=[\mathfrak X,\hat H^0]\]
since $a\in\R$ and $\mu\in i\,\R$.
Thus, using Lemma  \ref{lem.red} and \eqref{eq:real_invol},
\[
P:=[\mathfrak X,\hat P]\quad \text{for}\quad \hat P:=\hat P^0+\hat P^+
\]
satisfies the stated equations, and that $P\in\mathring{\mathcal K}^{\geq0}_{\mathfrak X}$
can be directly verified.
\end{proof}

\section*{Acknowledgements}{\small
The authors acknowledge the use of ChatGPT as an editorial tool for language editing,
rephrasing, and improving the clarity and structure of the exposition.
The second author acknowledges financial support from the Beijing Natural Science Foundation
IS23003. The third author acknowledges financial support from the SPP 2026 "Geometry at infinity" 
DFG-priority programme, and would like to warmly thank Carlos Florentino for fruitful
discussions on moduli of hyperpolygons, as well as BIMSA for its hospitality during the development of this work.}


\end{document}